\documentclass[hidelinks,onefignum,onetabnum]{siamart250211} 

\title{On the Stability Connection Between Discrete-Time Algorithms and Their Resolution ODEs: Applications to Min-Max Optimisation}

\usepackage{times}
\usepackage{listings}
\usepackage{array} 
\usepackage{epstopdf}
\usepackage{url}
\urldef{\MyBinderURL}\url{https://shorturl.at/pfLLb}
\usepackage{graphicx}
\usepackage{booktabs}
\usepackage{multirow}
\usepackage[T1]{fontenc}
\usepackage{footmisc}
\usepackage{lmodern}
\usepackage{times}
\usepackage{amsfonts}
\usepackage{float}
\usepackage{amssymb}
\usepackage{amsmath}
\usepackage{algorithm}
\usepackage{algpseudocode}
\usepackage{cleveref}
\usepackage{hyperref}

\newcommand{\R}{\mathbb{R}}
\newcommand{\N}{\mathbb{N}}

\newsiamremark{remark}{Remark}
\newsiamremark{assum}{Assumption}

\author{Amir Ali Farzin\thanks{School of Engineering, Australian National University
  (\email{amirali.farzin@anu.edu.au}, \email{yuenman.pun@anu.edu.au}, \email{philipp.braun@anu.edu.au}).}
\and Yuen-Man Pun\footnotemark[1]
\and Philipp Braun\footnotemark[1]
\and Iman Shames\thanks{Department of Electrical and Electronic Engineering, University of Melbourne  \email{iman.shames@unimelb.edu.au}.}}

\usepackage{amsopn}

\begin{document}

\maketitle

\begin{abstract}
This work establishes a rigorous connection between stability properties of discrete-time algorithms (DTAs) and corresponding continuous-time dynamical systems derived through \( O(s^r) \)-resolution ordinary differential equations (ODEs). We show that for discrete- and continuous-time dynamical systems satisfying a mild error assumption, exponential stability of a common equilibrium with respect to the continuous time dynamics implies exponential stability of the corresponding equilibrium for the discrete-time dynamics, provided that the step size is chosen sufficiently small. We extend this result to common compact invariant sets. We prove that if an equilibrium is exponentially stable for the \( O(s^r) \)-resolution ODE, then it is also exponentially stable for the associated DTA. We apply this framework to analyse the limit point properties of several prominent optimisation algorithms, including Two-Timescale Gradient Descent--Ascent (TT-GDA), Generalised Extragradient (GEG), Two-Timescale Proximal Point (TT-PPM), Damped Newton (DN), Regularised Damped Newton (RDN), and the Jacobian method (JM), by studying their \( O(1) \)- and \( O(s) \)-resolution ODEs. We show that under a proper choice of hyperparameters, the set of saddle points of the objective function is a subset of the set of exponentially stable equilibria of GEG, TT-PPM, DN, and RDN. We relax the common Hessian invariance assumption through direct analysis of the resolution ODEs, broadening the applicability of our results. Numerical examples illustrate the theoretical findings.
\end{abstract}

\begin{keywords}%
 Asymptotic stability, Min-max problems, Numerical optimisation, Saddle points
\end{keywords}

\section{Introduction}
Many iterative algorithms in optimisation and game theory can be viewed as discrete-time dynamical systems, and analysing the stability of their equilibria is fundamental to understanding convergence. Direct analysis of discrete-time dynamics is often technically involved, especially for algorithms arising in nonconvex–nonconcave min–max optimisation problems, where convergence to local saddle points is not guaranteed and spurious attractors may appear~\cite{hsieh2021limits}.
Continuous-time dynamical systems are typically easier to analyse than their discrete-time counterparts. A powerful idea is to associate a discrete-time algorithm (DTA) with a continuous-time flow that it tracks closely for small step sizes. This is formalised in ~\cite{lu2022sr} through the notion of an \(O(s^r)\)-resolution ordinary differential equation (resolution ODE) 
which approximates the discrete-time updates. Resolution ODEs have been used to gain insight into the behaviour of optimisation algorithms by transferring the analysis from the discrete-time domain to a continuous-time setting. However, a rigorous understanding of when and how local stability properties transfer between these two domains has remained largely implicit.

In this paper, we establish general theorems rigorously connecting the local stability properties of a discrete-time system and its associated resolution ODE. Using a Lyapunov approach, we show that if the continuous-time system and the discrete-time algorithm satisfy a closeness assumption (Assumption~\ref{as:error}), then exponential stability of a common equilibrium, or common asymptotically stable compact invariant set, for the continuous-time system is inherited by the discrete-time system, provided the step size is sufficiently small. This yields a general stability-transfer principle applicable to any discrete-time system satisfying the closeness assumption.

The practical value of this contribution is a systematic tool: rather than directly analysing the often complicated discrete-time dynamics, one can study the corresponding resolution ODE, determine its stability properties, and transfer these conclusions back to the original algorithm.

\paragraph{Application to min–max optimisation algorithms}
Min-max optimisation problems model the interaction between two decision-makers, where one aims to maximise an objective function while the other seeks to minimise it. These problems have traditionally been studied in economics \cite{myerson2013game}, and have recently gained prominence in areas such as adversarial training \cite{madry2018towards} and multi-agent reinforcement learning \cite{omidshafiei2017deep}. A standard formulation of a min-max problem is given by
\begin{equation}\label{eq:eq20}
	\min_{x \in \mathbb{R}^n}\;\max_{y \in \mathbb{R}^m}\;f(x,y),
\end{equation}
where $f: \mathbb{R}^n\times\mathbb{R}^m\rightarrow\mathbb{R}$ is the objective function. In this work, we apply the aforementioned local stability results to investigate stability properties of algorithms used for solving the min-max problem.

Iterative numerical algorithms, specifically first-order gradient-based methods such as Gradient Descent Ascent (GDA)
\cite{chen1997convergence} and their successors, 
including
Extragradient Algorithms (EG)~\cite{korpelevich1977extragradient}, proximal point methods~\cite{rockafellar1976monotone}, optimistic GDA
(OGDA)~\cite{wei2021last}, and stochastic GDA
(SGDA)~\cite{beznosikov2023stochastic}, and second-order methods, such as Newton-type algorithms~\cite{zhang2020newton} are the go-to methods of choice
for solving problem \eqref{eq:eq20}.
Generally, in the literature of min-max optimisation, the algorithms have been
studied from the perspective of first-order optimality or convergence to an
$\epsilon$-stationary point, i.e., a point $(x,y)$ such that $\|\nabla
f(x,y)\|\leq\epsilon$
\cite{diakonikolas2021efficient,farzin2025min,lin2020gradient}. It is also
known that these iterative algorithms may fail to converge to 
local saddle points (local Nash equilibria), e.g., GDA will fail to converge to 
saddle point for a bilinear objective function~\cite{hsieh2021limits}.

While it is known that 
finding Nash equilibria is generally PPAD-complete~\cite
{daskalakis2009complexity}, one can certify whether the saddle points
are a subset of the limit points of a given algorithm or not. To do so,
we can analyse the discrete-time algorithm from the viewpoint of discrete-time
dynamical systems and study the stability properties of its fixed points,
similar to~\cite{daskalakis2018limit}. A possible framework is to find an
accurate enough continuous flow that the discrete-time algorithms follow,
analyse them, and draw conclusions about the DTA from the analysis of the
continuous flow. In \cite{lu2022sr}, the author introduced a framework to find
$O(s^r)$-resolution ODEs 
 of a DTA 
with a mild assumption on the DTA. 
For prominent iterative numerical algorithms, we show how to 
obtain their 
$O(s^r)$-resolution ODE 
and analyse 
if saddle points are a subset of the stable 
equilibria of the ODE.
If they are, we conclude 
that they are a subset of the 
stable equilibria of the DTA. Using this framework, one can characterise 
the relationship between 
saddle points and 
stable equilibria of a DTA.
Moreover,
traditional convergence analyses often rely on linearisation and spectral conditions around equilibria, which typically require invertibility of the Hessian at the saddle point \cite{chae2023two,jin2020local,daskalakis2018limit}. Our framework provides a way to bypass these assumptions by directly analysing the resolution ODE using Lyapunov methods, we can deduce 
stability for the discrete-time system without requiring non-degeneracy of the saddle point.
\subsection{Contributions} 
        The main contributions of this paper are as follows:
            
\noindent    {\bf (i)}        
We establish general results showing that local exponential and asymptotic stability of equilibria and compact invariant sets of a continuous-time system imply corresponding stability properties for a discrete-time system if the two systems satisfy a closeness assumption (Assumption~\ref{as:error}). We then show that discrete-time algorithms and their resolution ODE, under appropriate step-size conditions, satisfy the aforementioned closeness assumption. 

\smallskip
    
\noindent    {\bf (ii)}    We apply this framework to analyse the local convergence of several first- and second-order min–max algorithms through their resolution ODEs. We establish that, under explicitly-stated suitable step-size conditions, local saddle points form a subset of the locally exponentially stable fixed points of the Generalised Extragradient, Proximal Point, and (Regularised) Damped Newton methods. In addition, we highlight the inherent limitations of the Two-Timescale Gradient Descent–Ascent method and Jacobian methods in ensuring convergence to local saddle points.

\smallskip
    
\noindent    {\bf (iii)} We demonstrate how the framework circumvents standard Hessian invertibility assumptions--prevalent in analyses of DTAs--through direct continuous-time analysis.
	
	Overall, we provide a rigorous bridge between continuous-time and discrete-time stability analysis, offering both theoretical insights and practical tools for studying iterative algorithms.
\subsection{Related Works}
The relationship between the stability properties of discrete time and continuous-time dynamical systems has long been studied in control theory and numerical analysis \cite{deuflhard2012scientific,hairer1993solving}.  In the numerical analysis literature, the stability of numerical discretisation methods has been extensively characterised through the concept of $A$-stability, $L$-stability, and related notions~\cite{hairer1993solving,deuflhard2012scientific, butcher2016numerical}. For linear ODEs, these results are based on how well a discretisation method's rational function can approximate an exponential function. Based on consistency (order of error in approximating the exponential function) of the discretisation method, one can find a small enough step size to ensure that the origin is asymptotically stable for the discrete-time dynamics. The results ensure that if the continuous-time system is asymptotically stable, then the discrete-time system obtained via an $A$-stable method (e.g., implicit Euler) preserves stability for all step sizes, while explicit methods preserve it only for sufficiently small step sizes. For nonlinear ODEs, the results show that one can linearise around the equilibrium and, conditional on the negativity of the real parts of the eigenvalues of the Jacobian of the dynamics at the equilibrium, one can use the results for 
linear systems to 
conclude properties 
for nonlinear systems. 
Hence, standard nonlinear results rely on nondegenerate Jacobians and 
they are not applicable to 
settings where the Jacobian 
at the equilibrium has zero eigenvalues or where stability with respect to a set instead of an equilibrium is of interest. 
Here, we overcome this limitation by directly working with Lyapunov functions for nonlinear systems.

In \cite{stuart1998dynamical}, the authors study stability of attractors of continuous-time systems 
under discretisation using Lyapunov methods. 
Under specific assumptions (see \cite[Assump. 7.1.1 and 7.1.2]{stuart1998dynamical}), they show that
the discretised system locally converges to a neighbourhood of the locally compact asymptotically stable invariant set of the continuous-time dynamics
and the neighbourhood becomes arbitrarily small
when the step size goes to zero. 
In this paper, based on a closeness assumption (see Assumption \ref{as:error}), inheritance of asymptotic stability  properties of compact invariant sets for fixed step sizes, instead of practical asymptotic stability as in \cite{stuart1998dynamical}, is achieved.
In the discrete control literature, there are papers 
such as \cite{nevsic1999sufficient,nevsic2009stability}, which 
study controller designs with practical stability guarantees by analysing 
an exact discrete-time model of a nonlinear plant.
Practical stability for a
family of approximate discrete-time models of a sampled nonlinear plant is achieved 
under one-step consistency assumptions on 
the approximate and exact discrete-time system. 
More recently, discrete-to-continuous stability connections in special cases have been studied for learning and optimisation algorithms viewed as dynamical systems, e.g.,~\cite{fazlyab2018analysis,su2016differential}, which exploit continuous-time analogues to understand convergence rates and robustness. 


In the min-max optimisation literature, the relationship between the saddle points and fixed points of some of the discrete-time algorithms has been studied before. 
In~\cite{daskalakis2018limit}, the fixed-point properties of the GDA and OGDA methods are analysed. It is shown that, under certain assumptions, a certain set of saddle points of the objective function is contained within the set of limit points of the GDA method, which, in turn, is contained within the set of limit points of the OGDA method. 
In \cite{jin2020local}, the authors show 
that the same property holds for Two-Timescale GDA (TT-GDA)\footnote{The choice of the quantifier ``two-timescale'' to describe the algorithm is imprecise, as the algorithm does not operate in two distinct timescales. However, ``two-timescale'' is  commonly used in the literature to describe this algorithm and similar ones. Using ``two-stepsize'' would possibly lead to a more precise name.}. When the ratio of the maximiser step size to minimiser step size goes to infinity ($\infty$-GDA), the local minmax points are a subset of $\infty$-GDA stable points.
However, it is well known that the GDA method may converge to limit cycles or even diverge~\cite{daskalakis2018limit}. This phenomenon is not unexpected. Analogous issues arise in continuous-time saddle-point dynamics for min–max problems, which converge only under additional structural assumptions (see, e.g.,~\cite[Ch.~11.5]
{goebel2024set}). Furthermore, \cite{hsieh2021limits} analyses GDA-type min–max algorithms within the Robbins–Monroe framework and demonstrates the existence of spurious attractors that contain no stationary points of the underlying problem.
In \cite{chae2023two}, the authors analyse 
Two-Timescale EG and prove 
that when the ratio of the maximiser step size to minimiser step size goes to infinity ($\infty$-EG), the local minmax points are a subset of $\infty$-EG stable points with relaxed
assumptions compared to \cite{jin2020local}.
In \cite{farzin2025properties}, the authors analyse 
a Generalised 
Extragradient Algorithm (GEG) and prove 
that under certain assumptions, all 
saddle points of the objective function are a subset of the stable equilibria of the GEG algorithm under proper hyper-parameter selection. 

In \cite{lu2022sr}, 
the $O(s^r)$-resolution ODE of a discrete-time algorithm has been introduced. In this context, the authors provide 
a framework on how to obtain the $O(s^r)$-resolution ODE and prove that the ODE is unique. 
This framework 
builds a connection between a DTA and a continuous time flow with a certain bounded error, 
which we use to analyse
exponential stability properties of equilibria of 
specific $O(s^r)$-resolution ODEs at a common equilibrium of 
the ODE and DTA. 

\textit{Outline:}  The main stability results are given in Section~\ref{sec:stab}. Necessary preliminaries and definitions for the analysis of DTAs for \eqref{eq:eq20} are introduced in Section~\ref{sec:per}, 
before they are used in 
Section~\ref{sec:main}, which presents 
results 
on equilibria of the dynamical systems associated with min-max algorithms. 
Conclusions and 
future research directions are discussed in Section~\ref{sec:conc}. 
Proofs of results discussed in the paper are shifted to Appendix \ref{app:extra} to Appendix \ref{app:o}.
Numerical experiments illustrating the theoretical findings are available online in a \href{https://mybinder.org/v2/gh/amirali78frz/sicon-minmax-stability-numerics.git/main?urlpath=%2Fdoc%2Ftree%2FSIAM_binder.ipynb}{\texttt{Binder notebook}}\footnote{\MyBinderURL}.

\textit{Notation:}
For  
$x,y\in \mathbb{R}^d$, $\delta>0$ and $A\in \R^{m\times d}$, let
$\langle x , y \rangle=x^\top y$, 
$|x|=\sqrt{\langle x , x \rangle}$, $|z|_{y}=|x-y|$, $\mathcal{B}_\delta(x)=\{y\in \R^d| \ |y|_{x} \leq \delta\}$ and $\|A\|=\sup_{|x|=1}|Ax|$.   
For $c=a+\jmath b \in\mathbb{C}$,
$\Re(c)=a,$ $\Im(c)=b,$ and $|c|=\sqrt{a^2+b^2}$. 
For $r,n,m\in \N$, 
$f:\R^{n}\rightarrow \R^m$ is a $C^r$-function if it is $r$-times continuously differentiable.
Gradient and Hessian of 
$f: \mathbb{R}^{n+m} \to \mathbb{R}$ are denoted by 
$\nabla f$ and  $\nabla^2 f$. 
For $(x,y)\in \R^{n+m}$, $ \nabla_x f$, $\nabla_y f$, $\nabla^2_{xx} f$, $ \nabla^2_{xy} f$, and $\nabla^2_{yy} f$ 
denote parts of the gradient/Hessian where the derivative of $f$ is taken with respect to the subscript.
For $A\in \R^{n\times n}$, 
$\sigma(A) = \{\lambda \in \mathbb{C} | \ \det(A-\lambda I) = 0 \}$ and $
\rho(A) = \max_{\lambda \in \sigma(A)} |\lambda|$ denote the spectrum and the spectral radius. 
For $A$ symmetric, $A \succ 0$ ($A \succeq 0$) indicates that  \( A \) is positive (semi-)definite and $I$ denotes the identity matrix. We denote the distance of a point $z$ to the set $\mathcal{A}$ by $|z|_\mathcal{A} = \inf_{a\in\mathcal{A}}|z-a|.$
For 
$h,g:\R \rightarrow \R^d$ and $r\in \N$, 
$O(s^{r})$ means that there exists a constant $C\in \R_{\geq 0}$ such that \(\lim_{s\rightarrow0}\frac{1}{s^{r}}|h(s)-g(s)|=C\)
and 
$o(s^{r})$ means that \(\lim_{s\rightarrow0}\frac{1}{s^{r}}|h(s)-g(s)|=0\).

\section{Equilibria of continuous-time and discrete-time systems}\label{sec:stab}

In this section, we analyse the connection between the stability properties of common equilibria of discrete-time and continuous-time dynamical systems satisfying a specific property.
While the main results focus 
on
the stability properties of an equilibrium, we additionally consider stability properties of compact sets.
Consider a discrete-time dynamical system, $w_{s}: \R^{d} \rightarrow \R^d$,
\begin{align}
	z_{k+1} 
    = w_s(z_k) \qquad (\text{or } z^+=w_s(z))  \label{eq:discrete_time_sys}
\end{align}
with state $z\in \R^d$, which depends on 
a positive constant $s\in \R_{>0}$.
The solution of \eqref{eq:discrete_time_sys} at time $k\in \N$ with respect to an initial condition $z_0\in \R^d$ is denoted by $z_k(z_0)$ or simply by $z_k$ if the initial condition is clear from the context.
The constant $s$ can, for example, represent the step size in optimisation algorithms. 
In addition, we consider a continuous-time dynamical system $W_{s}: \R^d\rightarrow\R^d$,
\begin{align}\label{eq:continuous_sys}
	\dot{Z} 
    = W_s(Z)
\end{align}
with state $Z\in \R^d$ and positive constant $s\in \R_{>0}$.
Similar to the discrete-time setting, the solution of \eqref{eq:continuous_sys} at time $t\in \R_{>0}$ w.r.t. an initial condition $Z_0 \in \R^d$ is denoted by $Z(t;Z_0)$ or simply by $Z(t)$ if the initial condition is clear from the context.

\subsection{Exponential stability of common equilibria} \label{sec:exp_stab_prop}

In this section, we discuss conditions 
under which an exponentially stable equilibrium of the dynamics
\eqref{eq:continuous_sys} is also exponentially stable for the discrete-time dynamics \eqref{eq:discrete_time_sys}. Before we present the result, we introduce the underlying definitions.

\begin{definition}
	Consider the discrete-time system \eqref{eq:discrete_time_sys}. 
	For $s\in \R_{>0}$, a 
    point $z^e\in \R^d$ is called 
	an equilibrium of \eqref{eq:discrete_time_sys} 
    if  $ w_s(z^e) = z^e$ holds. Moreover, for $s\in \R_{>0}$, it is an equilibrium of the continuous-time system \eqref{eq:continuous_sys} if $ W_s(z^e) = 0$ is satisfied.
\end{definition}

Note that an equilibrium of a discrete system is a fixed point of the mapping $w_{s}(\cdot)$. While the term equilibrium is common in the dynamical systems literature, fixed points are more common in the literature on iterative optimisation algorithms. Here, we use both terms interchangeably.

\begin{definition} \label{def:stab_discr}
Consider the discrete-time dynamics \eqref{eq:discrete_time_sys} and let
    $z^e \in \R^d$ be a corresponding equilibrium. 
	The equilibrium $z^e$ is said to be locally
	stable for 
    \eqref{eq:discrete_time_sys} if, for every $\varepsilon>0$, there exists $\delta>0$ such that 
	if $|z_0|_{z^e} < \delta$ then, for all $k\geq 0$, 
	\(
		|z_k|_{z^e} < \varepsilon.
	\) 
	Otherwise, $z^e$ is called unstable. 
	The equilibrium $z^e$ is locally asymptotically stable if it is locally stable and there exists $\delta>0$ such that if $|z_0|_{z_e} < \delta$, then
	\(
		\lim_{k\to\infty}|z_k|_{z^e} = 0.
	\)
	The equilibrium $z^e$ is locally exponentially stable if it is stable and there exist $M>0$ and $\gamma\in(0,1)$ such that if    
    $|z_0|_{z^e} < \delta$, then
	 \(
	 	|z_k|_{z^e} \leq M \, |z_0|_{z^e} \, \gamma^k
	 \)
     for all  $k\geq 0.$
\end{definition}

The continuous-time counterpart of Definition \ref{def:stab_discr} is presented below.

\begin{definition} \label{def:stab_cont}
	Consider the ODE 
    \eqref{eq:continuous_sys} and let 
    $Z^e \in \R^d$ be an 
    equilibrium. 
	The equilibrium  $Z^e$ is said to be locally
	stable for 
    \eqref{eq:continuous_sys} if, for every $\varepsilon>0$, there exists $\delta>0$ such that 
	if $|Z_0|_{Z^e} < \delta$, then for all $t\geq 0$, 
	\(
		|Z(t)|_{Z^e} < \varepsilon.
	\) 
	Otherwise, $Z^{e}$ is called unstable. 
	The equilibrium $Z^e$ is locally asymptotically stable if it is stable and there exists $\delta>0$ such that if $|Z_0|_{Z^e} < \delta$ then
	\(
		\lim_{t\to\infty}|Z(t)|_{Z^e} = 0.
	\)
	The equilibrium $Z^e$ is locally exponentially stable if it is stable and there exist $M>0$ and $\lambda>0$ such that, for all $|Z_0|_{Z^e} < \delta$,
	\(
		|Z(t)|_{Z^e} \leq M e^{-\lambda t} |Z_0|_{Z^e}, 
	\)
    for all  $t\geq 0.$
\end{definition}

To state the main theorem of this paper, we need the following assumption, also known as a one-step consistency condition \cite{deuflhard2012scientific}, which provides an error bound between the continuous-time dynamics and the discrete-time dynamics depending on the parameter $s$\footnote{Continuous-time systems and their corresponding discrete-time systems obtained by consistent one-step numerical discretisation schemes of at least order one, such as explicit and implicit Runge-Kutta and Euler methods, satisfy this assumption 
 \cite{deuflhard2012scientific}.}.

\begin{assum}\label{as:error}
    Consider \eqref{eq:discrete_time_sys} and \eqref{eq:continuous_sys} with constant $s>0$. 
    There exists $r\geq 2$ such that for any $z_0\in \R^d$, we have 
	\begin{align}
        |Z(s;Z_0)-z_1(z_0)|\leq O(s^r). \label{eq:solution_difference_r}
    \end{align}
	where $Z(s;z_0)$  is the solution of \eqref{eq:continuous_sys} obtained at $t=s$ with initial condition $z_0$.
\end{assum}
Next, 
we show that for common equilibria of \eqref{eq:discrete_time_sys} and \eqref{eq:continuous_sys},
an appropriate choice of $s$ ensures that locally exponentially stable equilibria of the continuous-time dynamics \eqref{eq:continuous_sys} are locally exponentially stable with respect to the discrete-time dynamics \eqref{eq:discrete_time_sys}.

\begin{theorem}\label{thm:stab}
Let \eqref{eq:discrete_time_sys} and  
\eqref{eq:continuous_sys} satisfy Assumption~\ref{as:error} with $z^e\in \R^d$ as a common equilibrium. Let $z^e$ be a locally exponentially stable equilibrium of \eqref{eq:continuous_sys},  
and let $W_s(\cdot)$ and 
$w_s(\cdot)$
be 
 $C^1$ functions. Then, there exists $s^\ast\in(0,1)$ such that for all $s\in(0,s^\ast],$ $z^e$ is a locally exponentially stable equilibrium of \eqref{eq:discrete_time_sys}.
\end{theorem}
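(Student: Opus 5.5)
Our approach is a Lyapunov argument. We build a quadratic-type Lyapunov function for the continuous-time flow, then use Assumption~\ref{as:error} to show that the same function decreases geometrically along the discrete iterates for small $s$.

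\textbf{Step 1: Lyapunov function for the flow.}
Since $W_s$ is $C^1$ and $z^e$ is locally exponentially stable for \eqref{eq:continuous_sys}, the converse Lyapunov theorem for exponential stability gives a function $V$ on a ball $\mathcal{B}_\rho(z^e)$ with constants $c_1,c_2,c_3,c_4>0$ such that
\begin{align*}
c_1|z|_{z^e}^2\le V(z)\le c_2|z|_{z^e}^2,\qquad
\langle \nabla V(z),W_s(z)\rangle\le -c_3|z|_{z^e}^2,\qquad
|\nabla V(z)|\le c_4|z|_{z^e}.
\end{align*}
One concrete choice is $V(z)=\int_0^T |Z(\tau;z)-z^e|^2\,d\tau$. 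Integrating along the flow over $[0,s]$ then gives
\begin{align*}
V(Z(s;z))\le e^{-(c_3/c_2)s}V(z).
\end{align*}
Alternatively, we can use the exponential estimate directly with $M,\lambda$ from Definition~\ref{def:stab_cont}.

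\textbf{Step 2: compare one discrete step with the flow.}
We write
\begin{align*}
V(z_1)-V(Z(s;z)) = \langle \nabla V(\xi),\, z_1-Z(s;z)\rangle
\end{align*}
for some $\xi$ on the segment between $z_1$ and $Z(s;z)$, and bound the gradient by $c_4|\xi|_{z^e}$.

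\textbf{Step 3 (main obstacle): make the consistency error state-dependent.}
Assumption~\ref{as:error} only gives an $O(s^r)$ error, with no factor $|z|_{z^e}$. An additive error would yield only practical stability, not exponential stability. We use that $z^e$ is a common equilibrium and that $w_s$ and $W_s$ are $C^1$. Define
\begin{align*}
e_s(z)=w_s(z)-Z(s;z).
\end{align*}
This is $C^1$ in $z$, because the flow map is $C^1$ in its initial condition, and it satisfies $e_s(z^e)=0$. Hence
\begin{align*}
|e_s(z)|\le \sup_{\mathcal{B}_\rho(z^e)}\|D e_s\|\,|z|_{z^e}.
\end{align*}
We need $\sup\|De_s\|=O(s^r)$, or at least $o(s)$, uniformly on the ball. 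The first thing we would try is to interpret the assumption as holding uniformly on compact sets together with first derivatives. When $w_s$ and $Z(s;\cdot)$ are both Taylor expansions in $s$ with $C^1$ coefficients, the matching coefficients cancel and the remainder's Jacobian is also $O(s^r)$. If the assumption must be taken literally, we would instead argue by linearisation. With $A_s=DW_s(z^e)$ Hurwitz, the $O(s^r)$ closeness at points $z^e+\epsilon v$, divided by $\epsilon$, gives
\begin{align*}
\|Dw_s(z^e)-e^{sA_s}\|=O(s^r).
\end{align*}
Continuity of $Dw_s$ then extends this to a small ball, yielding $|e_s(z)|\le K s^r|z|_{z^e}$ there.

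\textbf{Step 4: close the estimate.}
Combining Steps 1--3, and using $|Z(s;z)|_{z^e}\le M|z|_{z^e}$ and $|z_1|_{z^e}\le (M+Ks^r)|z|_{z^e}$, we get
\begin{align*}
V(z_1)\le e^{-(c_3/c_2)s}V(z)+C s^r|z|_{z^e}^2
\le \Bigl(1-\tfrac{c_3}{2c_2}s+\tfrac{C}{c_1}s^r\Bigr)V(z).
\end{align*}
Because $r\ge 2$, we can choose $s^\ast\in(0,1)$ so that for all $s\in(0,s^\ast]$ the factor satisfies
\begin{align*}
q:=1-\tfrac{c_3}{4c_2}s\in(0,1)
\end{align*}
as an upper bound.

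\textbf{Step 5: invariance and exponential stability.}
Sublevel sets of $V$ inside $\mathcal{B}_\rho(z^e)$ are forward invariant, so the estimate applies at every step. Induction gives $V(z_k)\le q^kV(z_0)$, and therefore
\begin{align*}
|z_k|_{z^e}\le\sqrt{c_2/c_1}\,(\sqrt q)^k|z_0|_{z^e}.
\end{align*}
This is exactly local exponential stability of $z^e$ for \eqref{eq:discrete_time_sys} in the sense of Definition~\ref{def:stab_discr}, with $\gamma=\sqrt q$. Stability itself follows from the same bound.
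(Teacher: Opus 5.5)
Your architecture matches the paper's: a converse Lyapunov function for the flow, then showing that $V$ contracts along $w_s$. You have also identified the real crux in Step~3, and the paper's own proof passes over it. In \eqref{eq:3} the paper bounds $\langle\nabla V(z),R_3\rangle$ by an additive $O(s^2)$ and treats it as dominated by $-\tfrac{c_3 s}{2}|z|^2$. That fails once $|z|\lesssim s$, so as written the paper's argument gives only practical stability.

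The gap in your proposal is the ``literal'' fallback in Step~3. Dividing the closeness bound at $z^e+\epsilon v$ by $\epsilon$ gives $C(z^e+\epsilon v)\,s^r/\epsilon$. This stays bounded as $\epsilon\to 0$ only if the constant is already $O(\epsilon)$, which is exactly the state-proportional bound you are trying to derive. Exchanging the limits $s\to0$ and $\epsilon\to0$ is also unjustified. The fallback cannot be repaired, because the statement is false under Assumption~\ref{as:error} as written. Take $d=1$, $W_s(z)=-z$ and
\[
w_s(z)=e^{-s}z+2z\,e^{-z^2/s^{2r}} .
\]
Then $w_s$ is $C^\infty$ and $w_s(0)=0$. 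Moreover $|Z(s;z)-w_s(z)|=2|z|e^{-z^2/s^{2r}}\le\sqrt2\,e^{-1/2}s^r$ uniformly in $z$, and this error is $o(s^k)$ for every $k$ at each fixed $z$. Yet $w_s'(0)=e^{-s}+2>1$, so $0$ is unstable for every $s>0$.

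Your option (a) therefore has to be stated as an extra hypothesis, not presented as a reading of Assumption~\ref{as:error}. Possible forms are $|w_s(z)-Z(s;z)|\le Ks^r|z|_{z^e}$ on a ball, or $\sup_{\mathcal{B}_\rho(z^e)}\|D_zw_s-D_zZ(s;\cdot)\|=o(s)$. The latter does follow from joint $C^2$ regularity of $(s,z)\mapsto w_s(z)$ and $(s,z)\mapsto W_s(z)$ near $s=0$, as in the resolution-ODE setting of Theorem~\ref{th:ODE}; the example above is not jointly $C^1$ at $s=0$.

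Under that hypothesis, Steps 1, 2, 4 and 5 are correct. Your comparison $V(Z(s;z))\le e^{-(c_3/c_2)s}V(z)$ combined with a mean-value estimate needs only $V\in C^1$. The paper's second-order Taylor expansion, by contrast, needs a Hessian bound on $V$.

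You should also state explicitly, as the paper does not, that $c_1,\dots,c_4$, $M$ and $K$ are uniform in $s$, since $W_s$ depends on $s$. For example, $W_s(z)=-s^5z$ with $w_s(z)=(1+s^3-s^6)z$ satisfies even your strengthened bound with $r=3$ and $K=1$. Nevertheless $w_s'(0)>1$ for every $s\in(0,1)$, because the flow's decay rate $s^5$ degenerates faster than the consistency error.
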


A proof of Theorem~\ref{thm:stab} can be found in Appendix~\ref{app:main}. In the proof, we start with 
a Lyapunov function for the ODE and, by 
using 
a converse 
Lyapunov theorem, 
we show that 
the same Lyapunov function can be used to prove 
exponential stability of the equilibrium of 
the discrete-time dynamics. Unlike the results given in \cite[Sec 6.1]{deuflhard2012scientific}, Theorem~\ref{thm:stab} does not suffer from the limitations associated with linearising a nonlinear ODE, 
and unlike the results in \cite{stuart1998dynamical}, we do not require $s\to0$. Leveraging Theorem~\ref{thm:stab}, one can analyse 
stability properties of common equilibria of \eqref{eq:discrete_time_sys} and \eqref{eq:continuous_sys} just by analysing the equilibria in the continuous time domain if \eqref{eq:discrete_time_sys} and \eqref{eq:continuous_sys} satisfy Assumption~\ref{as:error}. For example, one can consider numerical discrete-time optimisation algorithms and use the framework given in \cite{lu2022sr} to find the $O(s^r)$-resolution ODEs of the DTA, which satisfy Assumption~\ref {as:error} by design. 
Then, one can analyse the local convergence of specific equilibria of the DTA by analysing the stability properties of the equilibria with respect to the ODE. 
These properties will be analysed in detail in Section \ref{sec:per}.

\subsection{Asymptotic stability of compact sets}

While we are in general interested in properties of equilibria, sometimes it might only be possible to show convergence to a compact set $\mathcal{A}\subset \R^d$, but not necessarily convergence to a single point. To cover these cases, we extend the results in Section \ref{sec:exp_stab_prop} to compact attractive sets.

\begin{definition} \label{def:stab_discr_a}
Consider the discrete-time dynamics \eqref{eq:discrete_time_sys} and let
    $\mathcal{A}\subset \R^d$ be a compact 
    set. 
	The set $\mathcal{A}$ is said to be locally
	stable for the system \eqref{eq:discrete_time_sys} if, for every $\varepsilon>0$, there exists $\delta>0$ such that 
	if $|z_0|_\mathcal{A} < \delta$ then, for all $k\geq 0$, 
	\(
		|z_k|_\mathcal{A} < \varepsilon.
	\) 
	The set $\mathcal{A}$ is locally asymptotically stable if it is locally stable and there exists $\delta>0$ such that if $|z_0|_\mathcal{A} < \delta$ then
	\(
		\lim_{k\to\infty}|z_k|_\mathcal{A} = 0.
	\)
\end{definition}

\begin{definition} \label{def:stab_cont_a}
	Consider the ODE 
    \eqref{eq:continuous_sys} and let
    $\mathcal{A}\subset \R^d$ be a compact  
    set.
	The set $\mathcal{A}$ is said to be locally
	stable for 
    \eqref{eq:continuous_sys} if, for every $\varepsilon>0$, there exists $\delta>0$ such that 
	if $|Z_0|_\mathcal{A} < \delta$ then, for all $t\geq 0$, 
	\(
		|Z(t)|_\mathcal{A} < \varepsilon.
	\) 
	The set $\mathcal{A}$ is locally asymptotically stable if it is stable and there exists $\delta>0$ such that if $|Z_0|_\mathcal{A} < \delta$ then
	\(
		\lim_{t\to\infty}|Z(t)|_\mathcal{A} = 0.
	\)
\end{definition}

With these definitions, we can extend Theorem~\ref{thm:stab} to general compact sets.

\begin{theorem}\label{thm:stab1_A}
    Let the discrete-time dynamical system \eqref{eq:discrete_time_sys} and the ODE 
    \eqref{eq:continuous_sys} satisfy Assumption~\ref{as:error} and assume
    $W_s(\cdot)$ and
    $w_s(\cdot)$  
    are $C^1$ functions. Let $\mathcal{A}\subset \R^d$ be a nonempty,  
    compact, invariant set for system \eqref{eq:continuous_sys}. If 
    $\mathcal{A}$ is a 
    locally asymptotically stable set 
    with respect to  
    \eqref{eq:continuous_sys}, 
 then
 there exists $s^\ast\in(0,1)$ such that for all $s\in(0,s^\ast],$ $\mathcal{A}$ is locally asymptotically stable with respect to 
 \eqref{eq:discrete_time_sys}.
\end{theorem}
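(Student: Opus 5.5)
Following the proof of Theorem~\ref{thm:stab}, the plan is to take a Lyapunov function for the continuous-time dynamics and show that it decreases along the discrete iterates. Since $\mathcal{A}$ is compact and locally asymptotically stable for \eqref{eq:continuous_sys}, and $W_s$ is $C^1$ (hence locally Lipschitz), a converse Lyapunov theorem for compact attractors (e.g.\ Wilson/Kurzweil, or the $\mathcal{KL}$-version in \cite{goebel2024set}) gives a smooth $V$ on a neighbourhood $\mathcal{D}$ of $\mathcal{A}$ and class-$\mathcal{K}_\infty$ functions $\alpha_1,\alpha_2,\alpha_3$ such that
\[
\alpha_1(|z|_\mathcal{A})\le V(z)\le \alpha_2(|z|_\mathcal{A}),\qquad \langle \nabla V(z), W_s(z)\rangle \le -\alpha_3(|z|_\mathcal{A})
\]
on $\mathcal{D}$. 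By compactness, $\nabla V$ is Lipschitz on a compact sublevel set $\Omega_c=\{V\le c\}\subset\mathcal{D}$, with constant $L_V$.

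For $z\in\Omega_c$, split the decrement as
\[
V(w_s(z))-V(z)=\big[V(Z(s;z))-V(z)\big]+\big[V(w_s(z))-V(Z(s;z))\big].
\]
Integrating along the flow, the first term is at most $-\int_0^s\alpha_3(|Z(t;z)|_\mathcal{A})\,dt$. The second is at most $\sup|\nabla V|\cdot|w_s(z)-Z(s;z)|\le C s^r$ by Assumption~\ref{as:error}. This is where the difficulty lies: the error $Cs^r$ is a constant that does not vanish at $\mathcal{A}$, so this crude estimate only gives practical stability, as in \cite{stuart1998dynamical}. To get true asymptotic stability, the error must be shown to vanish on $\mathcal{A}$ and to scale with $|z|_\mathcal{A}$.

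I would try to recover this scaling as follows. First, use invariance of $\mathcal{A}$ under the flow together with the fact that $\mathcal{A}$ is also invariant (in the equilibrium case, fixed) for $w_s$. I expect to need this as an implicit hypothesis, mirroring the \emph{common equilibrium} hypothesis of Theorem~\ref{thm:stab}. Then, for $a\in\mathcal{A}$ the nearest point to $z$, write
\[
e_s(z)=w_s(z)-Z(s;z),\qquad e_s(z)-e_s(a)=\int_0^1 De_s(a+\theta(z-a))(z-a)\,d\theta .
\]
With $C^1$ data and the order-$r$ consistency interpreted uniformly with derivatives, $\|De_s\|\le K s^{r}$ on the compact set $\Omega_c$. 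This gives $|e_s(z)-e_s(a)|\le K s^r|z|_\mathcal{A}$, and $e_s(a)$ should be controlled once $\mathcal{A}$ is common and invariant for both maps. Interpreting the error in this derivative sense is the step I expect to be the main obstacle.

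On the flow side, use $|Z(t;z)|_\mathcal{A}\ge e^{-Lt}|z|_\mathcal{A}$, where $L$ is the Lipschitz constant of $W_s$ on $\Omega_c$. This gives the decrease $-\int_0^s\alpha_3(e^{-Lt}|z|_\mathcal{A})\,dt\le -s\,\alpha_3(e^{-L}|z|_\mathcal{A})$ for $s<1$. To compare a linear-in-$|z|_\mathcal{A}$ error with $\alpha_3$, I would choose the converse Lyapunov function so that $\alpha_3(r)\ge c_3 r$ near zero; this is possible by reparametrising $V$ via Massera-type $\rho\circ V$ (or, if that fails, by using the exponential-stability structure as in Theorem~\ref{thm:stab}). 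Then for $s\le s^\ast$ with $K' s^{r-1}<c_3e^{-L}/2$ we get
\[
V(w_s(z))-V(z)\le -\tfrac{s}{2}\,\alpha_3(e^{-L}|z|_\mathcal{A}).
\]
Finally, conclude in the standard way. Forward invariance of a smaller sublevel set $\Omega_{c'}$ follows because $w_s$ maps it into $\Omega_c$ for small $s$ by continuity and the error bound. Stability then follows from the sandwich bounds on $V$. Attractivity follows because $V(z_k)$ is nonincreasing and bounded below, so the summed decrements force $\alpha_3(e^{-L}|z_k|_\mathcal{A})\to0$, i.e.\ $|z_k|_\mathcal{A}\to 0$.
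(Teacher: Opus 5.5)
The obstacle you identify is real, and it is where the paper's own proof is unjustified. The paper expands $V(w_s(z))$ around $z$ and writes $w_s(z)=z+sW_s(z)+R_3$ with $|R_3|\le O(s^2)$. It then asserts that $-\tfrac{s}{2}\alpha_3(|z|_\mathcal{A})+cL_0s^2|z|_\mathcal{A}^2+O(s^2)<0$ for all small $s$. That fails as $|z|_\mathcal{A}\to 0$, because the $O(s^2)$ remainder does not vanish on $\mathcal{A}$, so the argument yields only practical stability.

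However, you close the gap with hypotheses the statement does not contain, and they cannot be derived from it. Invariance of $\mathcal{A}$ under $w_s$ is indispensable. Take $\dot Z=-Z$ on $\R$ and $w_s(z)=e^{-s}z+s^3$: Assumption~\ref{as:error} holds with $r=3$, both maps are $C^1$, and $\mathcal{A}=\{0\}$ is compact, invariant and asymptotically stable for the flow. Yet every discrete trajectory, including the one from $z_0=0$, converges to $s^3/(1-e^{-s})\neq 0$.

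The bound $\|De_s\|\le Ks^r$ is not implied by Assumption~\ref{as:error} either, even at a common equilibrium. The map $w_s(z)=e^{-s}z+s^3\sin(z/s^3)$ has $|e_s|\le s^3$ and $w_s(0)=0$, but $w_s'(0)=1+e^{-s}>1$. Finally, common invariance does not make $e_s$ vanish on $\mathcal{A}$; think of a cycle that $w_s$ traverses at a slightly different speed than the flow. So your estimate $|e_s(z)|\le Ks^r|z|_\mathcal{A}$ really needs $w_s=Z(s;\cdot)$ on $\mathcal{A}$, which holds automatically only when $\mathcal{A}$ consists of common equilibria.

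Even granting all this, the step where you arrange $\alpha_3(\rho)\ge c_3\rho$ near $0$ fails for sets that are asymptotically but not exponentially stable. No Massera-type reparametrisation can fix it. For $\dot x=-x^3$, a $C^1$ function $V$ with $V'(x)(-x^3)\le -c|x|$ would need $V'(x)\ge c/x^2$ for $x>0$, which is not integrable at $0$.

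The conclusion itself fails in this regime. Take $w_s(x)=x/\sqrt{1+2sx^2}+s^3x$. Then $e_s(x)=s^3x$ vanishes on $\mathcal{A}=\{0\}$ with $|De_s|=s^3$, and $\dot x=-x^3$ is even its $O(s)$-resolution ODE in the sense of Definition~\ref{def:ODE}. Yet $w_s'(0)=1+s^3>1$, so $0$ is unstable for every $s>0$. A decrease $s\,\alpha_3(|z|_\mathcal{A})$ that is superlinearly flat at $\mathcal{A}$ cannot absorb an error that is only linear in $|z|_\mathcal{A}$.

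Hence your argument closes only in the fallback you mention. That requires $\mathcal{A}$ exponentially stable in $|\cdot|_\mathcal{A}$, $e_s\equiv 0$ on $\mathcal{A}$ with $\|De_s\|\le Ks^r$ nearby, and a Lyapunov function with $|\nabla V|\le c_4|z|_\mathcal{A}$ and $\langle\nabla V,W_s\rangle\le -c_3|z|_\mathcal{A}^2$. In that case the mismatch term is $O(s^r)|z|_\mathcal{A}^2$ and is dominated by the flow decrease $c_3se^{-2L}|z|_\mathcal{A}^2$ for $r\ge 2$ and small $s$. That is a corrected set-version of Theorem~\ref{thm:stab}, not a proof of Theorem~\ref{thm:stab1_A}, which is false under its stated hypotheses.
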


A proof of Theorem~\ref{thm:stab1_A} can be found in Appendix~\ref{app:main}. It extends the arguments used in the proof of Theorem \ref{thm:stab}. 
Unlike the results given in \cite[Sec. 7]{stuart1998dynamical}, Theorem~\ref{thm:stab1_A} shows that 
local asymptotic stability of a compact set is preserved when one transitions from an ODE to a corresponding discrete-time system, while \cite[Sec. 7]{stuart1998dynamical} only guarantees practical stability and additionally requires $s\to0$ for asymptotic stability.



\section{Fixed points of DTAs and saddle points of \eqref{eq:eq20}} \label{sec:per}%

In 
Section \ref{sec:main}, we analyse 
connections between saddle points of the min-max problem \eqref{eq:eq20} and fixed points  
of 
state-of-the-art min-max algorithms. Here, we discuss 
preliminary results for this analysis. 
Instead of analysing the algorithms directly, we discuss 
algorithms through the perspective 
of dynamical systems and their stability properties. 
In particular, we write DTAs in the general form of discrete-time dynamical systems \eqref{eq:discrete_time_sys} and we consider the concept of $O(s^r)$-resolution ODEs to connect discrete-time systems \eqref{eq:discrete_time_sys} with continuous-time dynamics \eqref{eq:continuous_sys}. We leverage the results from Section \ref{sec:stab} to draw connections between saddle points of DTAs, equilibria of dynamical systems and stability properties of equilibria, and extensions to  properties of invariant sets.

To approximate the flow
that a DTA follows, we 
adopt the definitions and results given in \cite{lu2022sr} on how to obtain ODEs 
from a DTA and define 
$O(s^r)$-resolution ODEs.

\begin{definition}[\mbox{\cite[Def. 1]{lu2022sr}}]\label{def:ODE}
	For $r\in \N$ and $C^1$ functions $f_i:\R^d\rightarrow \R^d$, $i\in \{0,\ldots,r\}$, an 
    ODE of the form 
	\begin{align}\label{eq:ode}
		\dot{Z} = f^{(r)}(Z,s) = f_0(Z)+sf_1(Z)+\cdots+s^rf_r(Z)
	\end{align}
	is said to be the $O(s^r)$-resolution ODE of the discrete-time algorithm of $z_{k+1}=w_s
    (z_k)$, $k\in \N$, with step size $s\in \R_{>0}$ if it satisfies 
	\begin{align}
      |Z(s;z_0)-z_1(z_0)|=o(s^{r+1}) \qquad \forall \ z_0\in \R^d. 
    \end{align}
\end{definition}

The $O(s^r)$-resolution ODE of \eqref{eq:discrete_time_sys} can be constructed using the following result.
\begin{theorem}[\mbox{\cite[Thm. 1]{lu2022sr}}]\label{th:ODE}
	Consider a DTA with iterate update $z^+=w_s(z)$
    where $w_0(z)=z$ for all $z\in \R^d$ 
    and
    $w_{\cdot}(\cdot)$
    is sufficiently often continuously differentiable in $s$ and in $z$. Then its $O(s^r)$-resolution ODE is unique, and the $i$-th coefficient function in the $O(s^r)$-resolution ODE \eqref{eq:ode} 
    can be obtained recursively by
	\begin{align*}
    f_i(z) = \frac{1}{(i+1)!}\frac{\partial^{i+1} w_s(z)}{\partial s^{i+1}}\bigg|_{s=0}-\sum_{l=2}^{i+1}\frac{1}{l!}h_{l,i+1-l}(z),\quad \forall \  i = 0,1,\ldots,r,
    \end{align*}
	where $h_{0,0}(z)=z$, $h_{1,i}(z) = f_i(z)$ for $i\in\{0,\ldots,r\}$ and
    \begin{align*}
    h_{j+1,i}(z) = \sum_{l=0}^{i}\nabla h_{j,l}(z)h_{1,i-l}(z),  \quad h_{0,i}(z) = 0 \quad     \text{for $i,j\in\{1,\ldots,r\}$.} 
    \end{align*}
\end{theorem}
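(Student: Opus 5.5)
The plan is to compare the Taylor expansions in $s$ of the flow $Z(s;z)$ of \eqref{eq:ode} and of the one-step map $w_s(z)$, at a fixed initial point $z$. Definition~\ref{def:ODE} requires $|Z(s;z)-w_s(z)|=o(s^{r+1})$. Hence all coefficients of $s^0,\dots,s^{r+1}$ in the two expansions must coincide. Both existence of the recursive formula and uniqueness will come from showing that this matching is a triangular system in the unknowns $f_0,\dots,f_r$.

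\textbf{Expanding the flow.} First, I would compute the time derivatives of the solution $Z(t)$ of $\dot Z=\sum_{i=0}^r s^i f_i(Z)$. The claim, proved by induction on $j$, is
\[
\frac{d^j}{dt^j}Z(t)=\sum_{i\ge 0} s^i h_{j,i}(Z(t)),
\]
with the $h_{j,i}$ as in the statement.
\begin{itemize}
\item For $j=1$ this is the ODE itself, with $h_{1,i}=f_i$.
\item For the inductive step, differentiate $\sum_i s^i h_{j,i}(Z)$ by the chain rule to get $\sum_i s^i\nabla h_{j,i}(Z)\sum_l s^l f_l(Z)$. Collecting powers of $s$ gives $h_{j+1,i}=\sum_{l=0}^{i}\nabla h_{j,l}\,h_{1,i-l}$.
\end{itemize}
Since $f$ depends polynomially on $s$, only finitely many terms enter each coefficient. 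A Taylor expansion in $t$ at $t=0$, evaluated at $t=s$, then gives
\[
Z(s;z)=z+\sum_{j\ge1}\frac{s^j}{j!}\sum_{i\ge0}s^i h_{j,i}(z)
\]
up to a remainder. Grouping by total power, the coefficient of $s^{i+1}$ is $\sum_{j=1}^{i+1}\frac{1}{j!}h_{j,i+1-j}(z)$.

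\textbf{Expanding the algorithm and matching.} Second, I would Taylor expand $w_s(z)$ in $s$ at $s=0$. The order-zero term is $w_0(z)=z$, which matches the flow automatically. The coefficient of $s^{i+1}$ is $\frac{1}{(i+1)!}\partial_s^{i+1}w_s(z)|_{s=0}$. Equating the coefficients of $s^{i+1}$ for $i=0,\dots,r$ gives the required system of equations.

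The key structural observation is that $h_{j,k}$ involves only $f_0,\dots,f_k$. This follows from the recursion, again by induction on $j$. Consequently, in the coefficient of $s^{i+1}$ the term with $j=1$ is exactly $f_i$, while all terms with $j\ge2$ involve only $f_0,\dots,f_{i-1}$. Solving for $f_i$ yields precisely the displayed formula.

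This also shows that the matching system is lower triangular with identity on the diagonal. Therefore $f_0,\dots,f_r$ are determined successively and uniquely. Any other $O(s^r)$-resolution ODE must satisfy the same equations, so by induction on $i$ its coefficient functions coincide with these, which proves uniqueness.

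\textbf{Main obstacle.} The step I expect to need the most care is the remainder control. One must justify that the truncated expansions agree with $Z(s;z)$ and $w_s(z)$ up to $o(s^{r+1})$, pointwise in $z$.
\begin{itemize}
\item For $w_s$, this is Taylor's theorem in $s$, assuming $w$ is $C^{r+2}$ in $s$.
\item For the flow, I would use that, for $s\in[0,1]$ and $t\in[0,s]$, the solution stays in a compact neighbourhood of $z$. There the $(r+2)$-nd time derivative of $Z$ is bounded uniformly in $s$. This needs $f_i\in C^{r+1}$, which follows from sufficient smoothness of $w$ via the recursion.
\end{itemize}
With this uniform bound, the Lagrange remainder in $t$ is $O(s^{r+2})$. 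The contributions of powers $s^{i}$ with total degree above $r+1$ that were dropped in the double sum are likewise $O(s^{r+2})$. Together these give the $o(s^{r+1})$ error required in Definition~\ref{def:ODE}.
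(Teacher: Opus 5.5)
The paper does not prove this result; it imports it from \cite[Thm.~1]{lu2022sr}. Your argument is correct and matches the argument the $h_{j,i}$ recursion is built for: you recognise $\sum_i s^i h_{j,i}(Z)$ as the $j$-th time derivative of the flow, match the coefficients of $s^1,\dots,s^{r+1}$ in $Z(s;z)$ and $w_s(z)$, and solve the resulting unit lower-triangular system. Two small points should be tightened. First, the solution on $[0,s]$ stays in a fixed compact neighbourhood of $z$ only once $s$ is small enough (depending on $z$), not for all $s\in[0,1]$; this is harmless because the claim is asymptotic. Second, Definition~\ref{def:ODE} only requires $C^1$ coefficients, so a competing resolution ODE need not have a flow with an order-$(r+1)$ Taylor expansion, and your uniqueness step does not cover it as written. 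The fix is an induction with a Gronwall comparison against your ODE: if $\tilde f_k=f_k$ for $k<i$, then $\tilde Z(s;z)-Z(s;z)=s^{i+1}\bigl(\tilde f_i(z)-f_i(z)\bigr)+o(s^{i+1})$ using only continuity of the coefficients, and since $i\le r$ this forces $\tilde f_i=f_i$.
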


From Theorem~\ref{th:ODE} 
it
can be seen that $w_s(z)$ 
and $f^{(r)}(Z,s)$ satisfy Assumption~\ref{as:error} by construction. 
Thus,
Theorems~\ref{thm:stab} and \ref{thm:stab1_A}
can be applied, 
and we can analyse 
stability properties of 
fixed points of a DTA with respect to the $O(s)$-resolution ODE to conclude  
local convergence of the DTA to  
fixed points and invariant sets.

In the context of min-max problems \eqref{eq:eq20}, fixed points represent critical points and saddle points of DTAs.


\begin{definition}[Critical point] Let $f: \mathbb{R}^{n+m}
	\rightarrow \mathbb{R}$ be a 
	$C^1$ function.
	A point $(x^\ast,y^\ast)\in \R^{n+m}$ is called a critical point of $f$ if $\nabla f(x^\ast,y^\ast)=0$.
\end{definition}
\begin{definition}[
	{\cite[Def. 3.4.1]{bertsekas2009convex}}]
	A point $(x^\ast,y^\ast)\in \R^{n+m}$ is a local saddle point of $f:\mathbb{R}^{n+m} \rightarrow \mathbb{R}$ 
	if there exists a neighbourhood $U$ around $(x^\ast,y^\ast)$ so that 
	\begin{align}
		f(x^\ast,y)\leq f(x^\ast,y^\ast)\leq f(x,y^\ast) \qquad \forall \ (x,y)\in U. \label{eq:saddle_local}
	\end{align}

\end{definition}
\begin{proposition}[\mbox{\cite[Prop. 4 and 5]{jin2020local}}]\label{locNash}
	Let $f: \mathbb{R}^{n+m}
	\rightarrow \mathbb{R}$ be a $C^2$ function. Then
	any 
	saddle point
	$(x,y)$ is a critical point of $f$ and satisfies $\nabla_{xx}^2 f(x,y) \succeq 0$ and $\nabla_{yy}^2 f(x,y) \preceq 0$. 
\end{proposition}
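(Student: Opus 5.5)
The claim is that any local saddle point $(x,y)$ of a $C^2$ function $f$ is a critical point and satisfies $\nabla^2_{xx}f(x,y)\succeq 0$ and $\nabla^2_{yy}f(x,y)\preceq 0$. The idea is to split the saddle inequality \eqref{eq:saddle_local} into two single-player problems and apply standard unconstrained optimality conditions to each.

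Let $(x^\ast,y^\ast)$ be a local saddle point with neighbourhood $U$, and choose $\delta>0$ such that $\mathcal{B}_\delta((x^\ast,y^\ast))\subset U$. Define the one-sided functions
\begin{equation*}
\phi(x)=f(x,y^\ast), \qquad \psi(y)=f(x^\ast,y).
\end{equation*}
For every $x$ with $|x-x^\ast|\le\delta$, the point $(x,y^\ast)$ lies in $U$, so the right-hand inequality in \eqref{eq:saddle_local} gives $\phi(x)\ge\phi(x^\ast)$. Hence $x^\ast$ is a local minimiser of $\phi$. In the same way, the left-hand inequality shows that $y^\ast$ is a local maximiser of $\psi$, i.e.\ a local minimiser of $-\psi$. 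Both $\phi$ and $\psi$ are $C^2$, since they are restrictions of $f$ to affine slices.

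Next we apply the first- and second-order necessary conditions for an unconstrained local minimum. For completeness we derive them from Taylor's theorem. Fix a direction $v\in\R^n$ and let $t\to0$ in $\phi(x^\ast+tv)-\phi(x^\ast)\ge 0$.

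First, the expansion $\phi(x^\ast+tv)-\phi(x^\ast)=t\langle\nabla\phi(x^\ast),v\rangle+o(t)$, with $t$ taken of both signs, forces $\langle\nabla\phi(x^\ast),v\rangle=0$ for every $v$. Hence $\nabla\phi(x^\ast)=0$.

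Second, with the gradient term gone, $\phi(x^\ast+tv)-\phi(x^\ast)=\tfrac{t^2}{2}v^\top\nabla^2\phi(x^\ast)v+o(t^2)$. Dividing by $t^2$ and letting $t\to 0$ gives $v^\top\nabla^2\phi(x^\ast)v\ge 0$ for every $v$.

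Since $\nabla\phi(x^\ast)=\nabla_x f(x^\ast,y^\ast)$ and $\nabla^2\phi(x^\ast)=\nabla^2_{xx}f(x^\ast,y^\ast)$, this yields $\nabla_x f(x^\ast,y^\ast)=0$ and $\nabla^2_{xx}f(x^\ast,y^\ast)\succeq0$. Applying the same argument to $-\psi$ gives $\nabla_y f(x^\ast,y^\ast)=0$ and $\nabla^2_{yy}f(x^\ast,y^\ast)\preceq 0$.

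Finally, $\nabla f=(\nabla_x f,\nabla_y f)$, so both partial gradients vanishing means $\nabla f(x^\ast,y^\ast)=0$, and $(x^\ast,y^\ast)$ is a critical point. This completes the plan.

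The only step requiring care is the second-order Taylor remainder. It needs $f\in C^2$ so that the remainder is uniformly $o(t^2)$ along the fixed direction $v$; this holds by assumption. Beyond that the argument is routine.
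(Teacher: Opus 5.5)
Your proof is correct and takes the same approach as the paper. The paper simply cites the standard unconstrained first- and second-order necessary conditions applied to $x\mapsto f(x,y^\ast)$ and $y\mapsto -f(x^\ast,y)$, and your argument carries out exactly that reduction, deriving those conditions from Taylor's theorem.
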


The proof of Proposition \ref{locNash} follows from standard local optimality conditions (e.g., see \cite{ratliff2013characterization} after Definition 3).
With these definitions and results, we investigate the relationship between the local exponential/asymptotic stability of equilibria of the dynamical systems associated with min-max algorithms and the saddle points of the objective function $f$ in the following section. 
A standard way to verify exponential stability or instability of equilibria $z^*$ of dynamical systems \eqref{eq:discrete_time_sys} and \eqref{eq:continuous_sys}, is to look at the eigenvalues of $\frac{\partial w_s}{\partial z}(z^*)$ or $\frac{\partial W_s}{\partial z}(z^*)$, respectively (see 
Theorem~\ref{thm:stability_of_equilibria}).
In the context of DTAs for min-max problems, conditions on the eigenvalues of $\frac{\partial w_s}{\partial z}(z^*)$ can be translated into conditions on the Hessian $\nabla^2 f(x^*,y^*)$, and thus, a common assumption in the analysis of iterative optimisation algorithms (see  \cite{daskalakis2018limit,chae2023two}, for example), is to assume that the Hessian is invertible.

\section{Analysis of the Fixed Points of Min-Max Algorithms}\label{sec:main}

\subsection{An analysis of commonly used algorithms in the literature}\label{sec:main_p1}

In this section, we analyse the fixed point properties of Two-Timescale Gradient Descent Ascent (TT-GDA), 
GEG, Two-Timescale Proximal Point Method (TT-PPM), Damped Newton (DN), and Regularised Damped Newton (RDN).  
For each method,
we 
introduce the DTA and their corresponding $O(1)$- and $O(s)$-resolution ODE. For the DTAs,
we show that with proper choice of hyperparameters, with random initialisation, the DTAs 
almost surely escape the unstable equilibria. 

The results in this section rely on the following assumption.

\begin{assum}\label{as:inv}
	Let $f: \mathbb{R}^{n+m}
	\rightarrow \mathbb{R}$ be a $C^2$ function. 
	\begin{itemize}
		\item[(i)]  If $(x^*,y^*)\in \R^{n+m}$ is a saddle point of $f$, then $\nabla^2 f(x^*,y^*)$ is invertible.
		\item[(ii)] Gradient $\nabla f$ is globally Lipschitz with constant $L>0.$ 
	\end{itemize}
\end{assum}

The condition on the Hessian allows us to draw conclusions from the eigenvalues of $\nabla^2 f(x^*,y^*)$. 
To make the condition on the Hessian more precise, we introduce the following 
variables and notation: 
\begin{align}
	z=\left[ \! \begin{array}{c}
		x  \\
		y 
	\end{array} \! \right],  \quad 
	\Lambda_\tau =\left[ \! \begin{array}{cc}
		\frac{1}{\tau} I & 0  \\
		0 & I 
	\end{array} \! \right], \quad
	F(z)=\left[ \! \begin{array}{r}
		\nabla_x f(z)\\ 
		-\nabla_y f(z)
	\end{array} \! \right] \label{eq:F_definition}
\end{align}
for $\tau \in \R_{>0}$, and $d=n+m.$
Moreover, we denote the derivative of $-F(z)$ as
 \begin{align}
 	H(z) &= \left[\begin{array}{rr}
 		-\nabla_{xx} f(z)& -\nabla_{xy} f(z)\\ \nabla_{yx} f(z)& \nabla_{yy} f(z)
 	\end{array} \right]  =
 	\left[ \begin{array}{rr}
 		-I & 0  \\
 		0 & I
 	\end{array} \right] \nabla^2 f(z),
 	\label{eq:DF_def}
 \end{align}
which has the following property when evaluated at local saddle points.

\begin{lemma}[\mbox{\cite[Lem. 2]{farzin2025properties}}]\label{negeig}
	Let $f: \mathbb{R}^{n+m} 
	\rightarrow \mathbb{R}$ be a $C^2$ function, $H$ and $\Lambda_\tau$ be defined in \eqref{eq:DF_def} and \eqref{eq:F_definition} for $\tau\in \R_{>0}$. Let $z^\ast=(x^\ast,y^\ast)$ be a
	saddle point of $f$ and $\kappa\in\sigma(\Lambda_\tau H(z^\ast))$. 
	Then $\Re(\kappa)\leq0.$
\end{lemma}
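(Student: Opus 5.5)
We need to show that every eigenvalue $\kappa$ of $\Lambda_\tau H(z^\ast)$ satisfies $\Re(\kappa)\le 0$. By Proposition~\ref{locNash}, at a saddle point $z^\ast$ we have $A:=\nabla^2_{xx}f(z^\ast)\succeq 0$ and $C:=\nabla^2_{yy}f(z^\ast)\preceq 0$. Set $B:=\nabla^2_{xy}f(z^\ast)$, so that $\nabla^2_{yx}f(z^\ast)=B^\top$ by symmetry of the Hessian of a $C^2$ function. By \eqref{eq:DF_def},
\begin{align*}
\Lambda_\tau H(z^\ast)=\left[\begin{array}{cc} -\frac{1}{\tau}A & -\frac{1}{\tau}B\\ B^\top & C\end{array}\right].
\end{align*}
The plan is a direct Rayleigh-quotient argument with a weighted inner product. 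This weighting neutralises the skew-symmetric coupling that the time-scale factor $\frac1\tau$ would otherwise break.

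Let $\kappa$ be an eigenvalue with eigenvector $v=(v_1,v_2)\in\mathbb{C}^{n+m}$, $v\neq 0$. The eigen-equations are
\begin{align*}
-\tfrac{1}{\tau}(Av_1+Bv_2)&=\kappa v_1,\\
B^\top v_1+Cv_2&=\kappa v_2.
\end{align*}
Multiply the first equation on the left by $\tau v_1^\ast$ and the second by $v_2^\ast$, then add. This gives
\begin{align*}
\kappa\left(\tau|v_1|^2+|v_2|^2\right) = -v_1^\ast A v_1 - v_1^\ast B v_2 + v_2^\ast B^\top v_1 + v_2^\ast C v_2 .
\end{align*}
We now take real parts of each term. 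Since $B$ is real, $v_2^\ast B^\top v_1=\overline{v_1^\ast B v_2}$, so the cross terms $-v_1^\ast Bv_2+\overline{v_1^\ast Bv_2}$ are purely imaginary and have zero real part. Because $A$ and $C$ are real symmetric, $v_1^\ast Av_1\ge 0$ and $v_2^\ast Cv_2\le 0$ are real. Therefore
\begin{align*}
\Re(\kappa)\left(\tau|v_1|^2+|v_2|^2\right)=-v_1^\ast Av_1+v_2^\ast Cv_2\le 0 .
\end{align*}
Since $\tau>0$ and $v\ne0$, the weight $\tau|v_1|^2+|v_2|^2$ is strictly positive, and so $\Re(\kappa)\le 0$.

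The only delicate step is choosing the weighting $\Lambda_\tau^{-1}=\mathrm{diag}(\tau I,I)$, i.e.\ multiplying the first block equation by $\tau$. Without it the coupling terms would become $\frac{1}{\tau}v_1^\ast Bv_2$ and $v_2^\ast B^\top v_1$, which no longer cancel in real part. Equivalently, $\Lambda_\tau H(z^\ast)$ is self-adjoint-dissipative with respect to the inner product $\langle u,w\rangle_\tau=u^\ast\Lambda_\tau^{-1}w$. Note that the argument uses no invertibility assumption on the Hessian, only the semidefiniteness provided by Proposition~\ref{locNash}.
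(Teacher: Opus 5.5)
Your proof is correct and complete. This paper does not prove the lemma itself; it imports it from \cite[Lem.~2]{farzin2025properties}, so there is no in-paper argument to compare against.

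What your block computation establishes is the identity $\kappa\, v^\ast\Lambda_\tau^{-1}v = v^\ast H(z^\ast)v$, together with
$\Re\big(v^\ast H(z^\ast)v\big)=\tfrac12 v^\ast\big(H(z^\ast)+H(z^\ast)^\top\big)v=-v_1^\ast Av_1+v_2^\ast Cv_2\le 0$.
This relies on $A=\nabla^2_{xx}f(z^\ast)\succeq 0$ and $C=\nabla^2_{yy}f(z^\ast)\preceq 0$ from Proposition~\ref{locNash}, and on $\nabla^2_{yx}f=(\nabla^2_{xy}f)^\top$. You handle complex eigenvectors correctly. You also cover the degenerate case $v_1=0$ through the strictly positive weight $\tau|v_1|^2+|v_2|^2$. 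As you note, only the second-order necessary conditions are used and no Hessian invertibility, which is consistent with how the lemma is applied later.

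An equivalent packaging uses the similarity $\Lambda_\tau^{-1/2}(\Lambda_\tau H)\Lambda_\tau^{1/2}=\Lambda_\tau^{1/2}H\Lambda_\tau^{1/2}$. Its symmetric part is $\mathrm{diag}(-A/\tau,\,C)\preceq 0$, which reduces the claim to the standard $\tau=1$ argument.

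One wording nit: $\Lambda_\tau H(z^\ast)$ is dissipative with respect to $\langle u,w\rangle_\tau=u^\ast\Lambda_\tau^{-1}w$, but it is not self-adjoint there. Its coupling part is skew-adjoint in that inner product, which is exactly why the cross terms come out purely imaginary.
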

Lemma~\ref{negeig} holds for any positive $\tau.$ Thus,
we can conclude that the real parts of the eigenvalues of $H(z)$ evaluated at 
saddle points are non-positive.

We investigate the limit-point properties of DTAs indirectly by analysing the connection between saddle points and the fixed points of DTAs in the continuous-time setting. Then, leveraging Theorem~\ref{thm:stab}, we establish convergence results for the corresponding discrete-time algorithms.
A summary 
of the theoretical findings discussed here 
is given in Table~\ref{tab:sum}.

\begin{table}[htbp]
	\caption{Summary of the main limitations and local convergence behaviour of the analysed algorithms with respect to the saddle points.}
	\label{tab:sum}
	\renewcommand{\arraystretch}{1.3}
	\small
	\begin{tabular}{|c|>{\centering}m{4.2cm}|>{\centering}m{1.6cm}|>{\centering}m{1.6cm}|c|}
		\hline
		\multirow{2}{*}{\textbf{Algorithm}} & \multirow{2}{*}{\textbf{Limitations}}                                                                                                                                                            & \multicolumn{3}{m{5.8cm}|}{\textbf{Local Convergence to saddle points}} \\ \cline{3-5} 
		&                                                                                                                                                                                         & \textbf{DTA} & \textbf{$O(1)$ ODE} & \textbf{$O(s)$ ODE} \\ \hline
		\textbf{TT-GDA}                      & Not converging to saddle points when 
        eigenvalues of $H$ at saddle points are imaginary                                                                                           & \multicolumn{1}{c|}{No}  & \multicolumn{1}{c|}{No}   & No   \\ \hline
		\textbf{GEG}                        & Extra  gradient calculation needed compared 
        to TTGDA  
        & \multicolumn{1}{c|}{Yes} & \multicolumn{1}{c|}{No}   & Yes  \\ \hline
		\textbf{TT-PPM}                        & The exact implementation can be complex in general                                                                                                                             & \multicolumn{1}{c|}{Yes} & \multicolumn{1}{c|}{No}   & Yes  \\ \hline
		\textbf{DN}                         & Higher order information is needed and Hessian needs to be invertible everywhere                                                                                                        & \multicolumn{1}{c|}{Yes} & \multicolumn{1}{c|}{Yes}  & Yes  \\ \hline
		\textbf{RDN}                        & Higher order information is needed and the regulariser needs to be tuned 
        & \multicolumn{1}{c|}{Yes} & \multicolumn{1}{c|}{Yes}  & Yes  
        \\ \hline
	\end{tabular}
\end{table}

\subsubsection{Two-Timescale Gradient Descent Ascent}

In this section, we investigate the properties of the fixed points of the TT-GDA, which is defined as
\begin{equation}\tag{TT-GDA}
	\begin{aligned}\label{TTGDA}
		x_{k+1} = x_k -\alpha_{x} \nabla_x f(x_k,y_k)\quad \text{and}\quad
		y_{k+1} = y_k +\alpha_{y} \nabla_y f(x_k,y_k),
	\end{aligned}
\end{equation}
where $\alpha_{x}$ and $\alpha_{y}$ are positive step-sizes. Let $\alpha_{y}=s$ and $\alpha_{x}=s/\tau$ for 
positive constants $s$ and $\tau$.  This can be formulated as a discrete-time dynamical system 
\begin{align}\label{dyn:TTGDA}
	z_{k+1} = z_k - s\Lambda_\tau F(z_k)
\end{align}
where $z$, $\Lambda_\tau$ and $F$ are defined in \eqref{eq:F_definition}.
The \eqref{TTGDA} algorithm has been studied in previous works, e.g., \cite{jin2020local,chae2023two}. 
In \cite[Thm 2.2]{daskalakis2017training}, it has been proved for GDA (i.e., TT-GDA with $\tau=1$) that with random initialisation, GDA will almost surely escape unstable fixed points. 
We can extend this result for the general choice of $\tau.$
\begin{proposition}\label{prop:TTGDA_uns}
	Let Assumption~\ref{as:inv} be satisfied. For any $\tau>0$,  if 
    $\alpha_{y}=s\in(0,\frac{1}{L}\min\{1,\tau\})$,
    then the set of initial points $z_0\in \R^{n+m}$ so that \eqref{TTGDA} converges to an unstable fixed point is of Lebesgue measure zero.
\end{proposition}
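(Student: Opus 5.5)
This follows the standard stable-manifold route of \cite{daskalakis2017training} and Lee et al.: show that the TT-GDA map $g(z)=z-s\Lambda_\tau F(z)$ is a $C^1$ local diffeomorphism with invertible Jacobian everywhere, and then invoke the center-stable manifold theorem. That theorem says the set of initial conditions converging to an unstable fixed point is a countable union of preimages of measure-zero local center-stable manifolds. Since $g$ has everywhere-invertible Jacobian, the preimages of null sets under $g$ are null, so the union has measure zero.

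\textbf{Step 1: invertibility of the Jacobian.} We have $Dg(z)=I-s\Lambda_\tau DF(z)=I+s\Lambda_\tau H(z)$. Assumption~\ref{as:inv}(ii) gives $\|\nabla^2 f(z)\|\le L$ wherever the Hessian exists; since $f\in C^2$, this holds everywhere. Because $H(z)=\mathrm{diag}(-I,I)\nabla^2 f(z)$, also $\|H(z)\|\le L$. Moreover $\|\Lambda_\tau\|=\max\{1,1/\tau\}$, so
\[
\|s\Lambda_\tau H(z)\|\le sL\max\{1,1/\tau\}=\frac{sL}{\min\{1,\tau\}}<1
\]
by the step-size condition $s<\min\{1,\tau\}/L$. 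A Neumann-series argument then shows $Dg(z)$ is invertible for all $z$. The same bound shows $g$ is injective: if $g(z)=g(z')$, then $|z-z'|=s|\Lambda_\tau(F(z)-F(z'))|<|z-z'|$, which forces $z=z'$. Hence $g$ is a $C^1$ diffeomorphism onto its image.

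\textbf{Step 2: unstable fixed points.} An unstable fixed point is understood, as in \cite{daskalakis2017training}, as one at which $Dg(z^*)$ has an eigenvalue of modulus greater than $1$. Near each such point, the center-stable manifold theorem (Shub) gives a neighbourhood $B_{z^*}$ and a local center-stable manifold $W^{cs}_{\mathrm{loc}}$ of dimension less than $d$, hence of measure zero. Any orbit converging to $z^*$ eventually stays in $B_{z^*}$ and so lies in $W^{cs}_{\mathrm{loc}}$.

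\textbf{Step 3: countability and measure zero.} By Lindelöf, countably many $B_{z^*}$ cover the set of unstable fixed points. The set of bad initial points is therefore contained in $\bigcup_{i}\bigcup_{k}g^{-k}(W^{cs}_{\mathrm{loc},i})$. Each $g^{-k}$ maps null sets to null sets, since $g$ is a $C^1$ diffeomorphism onto an open image and $g^{-1}$ is locally Lipschitz. A countable union of null sets is null, which proves the claim.

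The main obstacle is Step 1: the factor $\Lambda_\tau$ produces the $\min\{1,\tau\}$ in the step-size bound. Once global invertibility holds, the rest is the standard argument cited from \cite{daskalakis2017training}. Assumption~\ref{as:inv}(i) is not really needed for this proposition, beyond clarifying the spectral definition of instability.
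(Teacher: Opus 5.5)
Your proposal is correct and follows the same route as the paper. The paper also shows that $Dg(z)=I+s\Lambda_\tau H(z)$ is invertible, using $s\|\Lambda_\tau\|\|H(z)\|\le sL/\min\{1,\tau\}<1$, and then defers the center-stable-manifold and countable-union argument to Lee et al. Your global injectivity argument and the explicit spectral reading of ``unstable'' fill in details the paper leaves implicit, but they do not change the approach.
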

A proof of Proposition~\ref{prop:TTGDA_uns} can be found in Appendix~\ref{app:main}.
Next, we analyse the ODE approximations of \eqref{TTGDA}. We use Theorem~\ref{th:ODE} to construct the $O(1)$- and $O(s)$-resolution ODEs of \eqref{TTGDA}, and, following \cite{lu2022sr},
where
$O(1)$- and $O(s)$-resolution ODEs of GDA have been derived, we obtain 
the $O(1)$- and the $O(s)$-resolution ODEs 
of \eqref{TTGDA}: 
\begin{align} \label{o1:TTGDA}
	\dot{z} &= -\Lambda_\tau F(z) \quad \text{and}\\
	\dot{z} &= -\Lambda_\tau F(z)-\tfrac{s}{2}\Lambda_\tau \nabla F(z)\Lambda_\tau F(z) = -(\mathrm{I}+\tfrac{s}{2}\Lambda_\tau\nabla F(z))\Lambda_\tau F(z). \label{os:TTGDA}
\end{align}
respectively.
The details of constructing \eqref{o1:TTGDA} and \eqref{os:TTGDA} can be found in Appendix~\ref{app:o}. In the next theorem, we 
analyse the properties of the limit points of \eqref{o1:TTGDA} and \eqref{os:TTGDA}.
\begin{theorem}\label{thm:TTGDA_con}
	Let Assumption~\ref{as:inv} be satisfied and $z^\ast$ be a saddle point of $f.$
	\begin{itemize}
		\item[(i)]  Consider the $O(1)$-resolution ODE of \eqref{TTGDA} in \eqref{o1:TTGDA}.  The set of saddle points of $f$ is a subset of the exponentially stable equilibria of \eqref{o1:TTGDA} if $\Re(\lambda)\not=0$ for $\lambda\in\sigma(\lambda_\tau H(z^\ast)).$ 
		\item[(ii)] Consider the $O(s)$-resolution ODE of \eqref{TTGDA} in \eqref{os:TTGDA}.  The set of saddle points of $f$ is a subset of the exponentially stable equilibria of \eqref{os:TTGDA} if $\Re(\lambda)\not=0$ for $\lambda\in\sigma(\lambda_\tau H(z^\ast))$ and 
        \begin{align}
        s\in\Big(0,\min\Big\{\frac{\min\{1,\tau\}}{L},\min_{\lambda\in\sigma(\Lambda_\tau H(z^\ast))}\Big\{\frac{2|\Re(\lambda)|}{\max\{L^2,\tau^{-2}L^2\}}\Big\}\Big\}\Big).
        \end{align} \label{eq:s_TTGDA}
	\end{itemize}
\end{theorem}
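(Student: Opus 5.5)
The plan is to prove both parts by linearisation: show that every saddle point $z^\ast$ is an equilibrium of the respective ODE, compute the Jacobian of the vector field at $z^\ast$, and verify that it is Hurwitz. Exponential stability then follows from the standard linearisation criterion (Theorem~\ref{thm:stability_of_equilibria}).

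\emph{Equilibrium property.} By Proposition~\ref{locNash}, every saddle point is a critical point, so $F(z^\ast)=0$. Hence $z^\ast$ is an equilibrium of both \eqref{o1:TTGDA} and \eqref{os:TTGDA}, since both right-hand sides contain $F(z)$ as a right factor.

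\emph{Part (i).} The vector field is $V_0(z)=-\Lambda_\tau F(z)$. Because $H$ is the derivative of $-F$, we get $DV_0(z^\ast)=\Lambda_\tau H(z^\ast)$. Lemma~\ref{negeig} gives $\Re(\lambda)\le 0$ for all $\lambda\in\sigma(\Lambda_\tau H(z^\ast))$. Together with the hypothesis $\Re(\lambda)\neq 0$, this yields $\Re(\lambda)<0$, so the Jacobian is Hurwitz and $z^\ast$ is locally exponentially stable.

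\emph{Part (ii).} The vector field is $V_s(z)=-(I+\tfrac{s}{2}\Lambda_\tau\nabla F(z))\Lambda_\tau F(z)$. When differentiating at $z^\ast$, the term in which the derivative falls on $\nabla F$ is multiplied by $F(z^\ast)=0$ and therefore vanishes. (Here I would note that $V_s$ involves second derivatives of $f$, so in applying linearisation I would either assume the extra smoothness needed for this product rule, or argue the cancellation directly via a difference quotient using only continuity of $\nabla F$.) With $\nabla F=-H$, this gives
\[
DV_s(z^\ast)=(I-\tfrac{s}{2}\Lambda_\tau H(z^\ast))\Lambda_\tau H(z^\ast)=A-\tfrac{s}{2}A^2,\qquad A:=\Lambda_\tau H(z^\ast).
\]
By the spectral mapping theorem for polynomials, $\sigma(DV_s(z^\ast))=\{\lambda-\tfrac{s}{2}\lambda^2:\lambda\in\sigma(A)\}$. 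Write $\lambda=a+\jmath b$, where $a<0$ as in part (i). Then
\[
\Re\big(\lambda-\tfrac{s}{2}\lambda^2\big)=a-\tfrac{s}{2}(a^2-b^2)\le a+\tfrac{s}{2}|\lambda|^2 .
\]
This is negative whenever $s<2|a|/|\lambda|^2$. To turn this into the stated uniform bound, I would estimate $|\lambda|\le\|A\|\le\|\Lambda_\tau\|\,\|\nabla^2 f(z^\ast)\|\le\max\{1,\tau^{-1}\}L$. This uses that the sign matrix in \eqref{eq:DF_def} is orthogonal and that $\|\nabla^2 f\|\le L$ by the Lipschitz continuity of $\nabla f$ in Assumption~\ref{as:inv}. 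It follows that $|\lambda|^2\le\max\{L^2,\tau^{-2}L^2\}$, and the stated upper bound on $s$ guarantees $\Re(\mu)<0$ for every $\mu\in\sigma(DV_s(z^\ast))$. The additional constraint $s<\min\{1,\tau\}/L$ is kept for consistency with Proposition~\ref{prop:TTGDA_uns}. It also ensures that $I-\tfrac{s}{2}A$ is invertible, although the stability argument does not need this.

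\emph{Main obstacle.} The only delicate step is the Jacobian computation of the $O(s)$ term and the subsequent eigenvalue bound. In particular, one must verify that the curvature term $\nabla(\nabla F)\cdot F$ drops out at $z^\ast$, and control $\Re(\lambda^2)$ through $|\lambda|^2$ rather than through $\Re(\lambda)$ alone. This is exactly where the term $b^2$ could destroy stability if $s$ is not small relative to $|a|/|\lambda|^2$.
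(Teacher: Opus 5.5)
Your proposal is correct and follows essentially the same route as the paper. You linearise at $z^\ast$, where $F(z^\ast)=0$ removes the curvature term, obtain $J(z^\ast)=(I-\tfrac{s}{2}\Lambda_\tau H(z^\ast))\Lambda_\tau H(z^\ast)$ with eigenvalues $\kappa-\tfrac{s}{2}\kappa^2$, and require $a-\tfrac{s}{2}(a^2-b^2)<0$. Your explicit bound $b^2-a^2\le|\kappa|^2\le\max\{L^2,\tau^{-2}L^2\}$, taken with the minimum over eigenvalues, justifies the stated step-size condition more carefully than the paper's proof, which only asserts it and writes a maximum where the statement correctly has a minimum; your remark on the extra smoothness needed to differentiate $\nabla F$ also addresses a point the paper passes over.
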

A proof of Theorem~\ref{thm:TTGDA_con} can be found in Appendix~\ref{app:main}.
For a proper step size $s$ and under Assumption~\ref{as:inv}, the saddle points of $f$ form a subset of the exponentially stable fixed points of the $O(1)$- and $O(s)$-resolution ODEs in \eqref{TTGDA}, provided the eigenvalues of $\Lambda_\tau H(z)$ at these points are non-imaginary. These saddle points are common equilibria of \eqref{dyn:TTGDA}, \eqref{o1:TTGDA}, and \eqref{os:TTGDA}. Therefore, by Theorem~\ref{thm:stab}, we can establish a connection between the fixed points of \eqref{dyn:TTGDA} and the saddle points of $f$.

\begin{corollary}\label{coro:TTGDA}
	Adopt the hypotheses of Theorems~\ref{thm:stab} and \ref{thm:TTGDA_con}. Let $\tau>0$ and $\alpha_{y}=s.$ There exists $s^\ast>0$ such that for all $s\in(0,s^\ast),$ the set of saddle points of $f$ is a subset of the exponentially stable equilibria of \eqref{dyn:TTGDA} if $\Re(\lambda)\not=0$ for $\lambda\in\sigma(\lambda_\tau H(z^\ast)).$ 
\end{corollary}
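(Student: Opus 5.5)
The plan is to combine Theorem~\ref{thm:TTGDA_con} with Theorem~\ref{thm:stab}, applied to the discrete-time system \eqref{dyn:TTGDA}, i.e.\ $w_s(z)=z-s\Lambda_\tau F(z)$. For the continuous-time companion I will use the $O(s)$-resolution ODE \eqref{os:TTGDA}, i.e.\ $W_s(z)=-(\mathrm{I}+\tfrac{s}{2}\Lambda_\tau\nabla F(z))\Lambda_\tau F(z)$. I choose \eqref{os:TTGDA} rather than \eqref{o1:TTGDA} because Assumption~\ref{as:error} requires $r\ge 2$. By Definition~\ref{def:ODE} and Theorem~\ref{th:ODE}, the $O(s)$-resolution ODE satisfies $|Z(s;z_0)-z_1(z_0)|=o(s^2)$, so $r=2$ is admissible. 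The $O(1)$-resolution ODE only gives $o(s)$, which is not enough.

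First I verify the hypotheses of Theorem~\ref{thm:stab}.

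\emph{Common equilibrium.} Let $z^\ast$ be a saddle point. By Proposition~\ref{locNash}, $\nabla f(z^\ast)=0$, hence $F(z^\ast)=0$. Then $w_s(z^\ast)=z^\ast$ and $W_s(z^\ast)=0$ for every $s$, so $z^\ast$ is a common equilibrium of \eqref{dyn:TTGDA} and \eqref{os:TTGDA}.

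\emph{Regularity.} Theorem~\ref{thm:stab} needs $w_s$ and $W_s$ to be $C^1$. Since $f\in C^2$, $F$ is $C^1$, so $w_s$ is $C^1$. For $W_s$, the term $\nabla F(z)\Lambda_\tau F(z)$ must be $C^1$, which needs $f\in C^3$. I will read this as part of "adopt the hypotheses of Theorem~\ref{thm:stab}", and likewise take the smoothness required by Theorem~\ref{th:ODE} (which produces Assumption~\ref{as:error}) as given. I expect this bookkeeping to be the only delicate point, and I would state it explicitly.

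\emph{Exponential stability of $z^\ast$ for the ODE.} Let $\bar s$ denote the upper bound in \eqref{eq:s_TTGDA}, which is positive because $\Re(\lambda)\neq0$ for all $\lambda$. For $s<\bar s$, Theorem~\ref{thm:TTGDA_con}(ii) gives that $z^\ast$ is a locally exponentially stable equilibrium of \eqref{os:TTGDA}.

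Next, Theorem~\ref{thm:stab} yields an $s_1^\ast\in(0,1)$ such that $z^\ast$ is locally exponentially stable for \eqref{dyn:TTGDA} for all $s\in(0,s_1^\ast]$. I then set $s^\ast=\min\{\bar s,s_1^\ast\}$. One subtlety is that the ODE \eqref{os:TTGDA} itself depends on $s$, so the argument must be read as: for each fixed small $s$, the pair $(w_s,W_s)$ satisfies the hypotheses.

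I would also like $s^\ast$ to be uniform over all saddle points. Assumption~\ref{as:inv}(i) makes saddle points nondegenerate, hence isolated. If there are finitely many (or if the spectral gaps $|\Re\lambda|$ are bounded away from zero), the minimum over saddle points is positive. Otherwise I interpret the statement pointwise: for each saddle point $z^\ast$ there is an $s^\ast(z^\ast)$.

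A cleaner alternative, which avoids both the $s$-dependence of the ODE and the $C^3$ requirement, is to argue directly by linearisation. At $z^\ast$ we have $\frac{\partial w_s}{\partial z}(z^\ast)=I+s\Lambda_\tau H(z^\ast)$, with eigenvalues $1+s\lambda$ where $\lambda\in\sigma(\Lambda_\tau H(z^\ast))$. By Lemma~\ref{negeig} and the hypothesis $\Re\lambda\neq0$, we have $\Re\lambda<0$. Then
\[
|1+s\lambda|^2=1+2s\Re\lambda+s^2|\lambda|^2<1
\quad\text{for}\quad s<\frac{2|\Re\lambda|}{|\lambda|^2}.
\]
Thus the spectral radius of the Jacobian is below $1$ for $s<\min_{\lambda}2|\Re\lambda|/|\lambda|^2$, and Theorem~\ref{thm:stability_of_equilibria} gives local exponential stability of $z^\ast$ for \eqref{dyn:TTGDA}. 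I would present the route via Theorem~\ref{thm:stab} as the main proof, since it is the one the corollary is framed around, and give this direct computation as a consistency check.
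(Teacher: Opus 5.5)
Your proposal is correct and follows the paper's route. Saddle points are common equilibria of \eqref{dyn:TTGDA} and its resolution ODE, Theorem~\ref{thm:TTGDA_con} gives exponential stability for the ODE, and Theorem~\ref{thm:stab} transfers it to the DTA; your remarks on the $C^3$ requirement, the $s$-dependence of $W_s$, and uniformity over saddle points make explicit what the paper leaves implicit.

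One side remark is wrong: the $O(1)$-resolution ODE is not excluded by Assumption~\ref{as:error}, because the one-step error between \eqref{dyn:TTGDA} and the flow of \eqref{o1:TTGDA} is the explicit-Euler local error $O(s^2)$. Using Theorem~\ref{thm:TTGDA_con}(i) with the $s$-independent, merely $C^1$ field $-\Lambda_\tau F$ would therefore avoid both subtleties you flag.
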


The stability properties of equilibria of \eqref{dyn:TTGDA} have 
been studied before, and Corollary~\ref{coro:TTGDA} states the same conclusion as \cite[Prop 26]{jin2020local}.
Next, we will analyse the properties of the limit points of the Generalised Extra Gradient algorithm.

\subsubsection{Generalised Extra Gradient}

In this section, we investigate the properties of the fixed points of the GEG algorithm, which is defined through the updates
\begin{align}\tag{GEG}
	\begin{aligned}\label{GEG}
		\hat{x}_k &= x_k -\alpha_{1x} \nabla_x f(x_k,y_k)\quad \text{and}\quad
		\hat{y}_k = y_k +\alpha_{1y} \nabla_y f(x_k,y_k)\\
		x_{k+1} &= x_k -\alpha_{2x} \nabla_x f(\hat{x}_k,\hat{y}_k)\quad \text{and}\quad
		y_{k+1} = y_k +\alpha_{2y} \nabla_y f(\hat{x}_k,\hat{y}_k),
	\end{aligned}
\end{align}
where $\alpha_{1x}$, $\alpha_{2x}$, $\alpha_{1y}$ and $\alpha_{2y}$ are positive step-sizes.
Here, following the analysis in and the results in \cite{farzin2025properties}, we focus on three degrees of freedom in the step size selection and
define $\gamma=\frac{\alpha_{2x}}{\alpha_{1x}}=\frac{\alpha_{2y}}{\alpha_{1y}}$, $\tau=\frac{\alpha_{1y}}{\alpha_{1x}}=\frac{\alpha_{2y}}{\alpha_{2x}}$ and $s=\alpha_{1y}$. 
Considering \eqref{eq:F_definition},
this 
can be formulated as 
\begin{align}\label{dyn:GEG}
	z_{k+1} = z_k - \gamma s\Lambda_\tau F(z_k-s\Lambda_\tau F(z_k)).
\end{align}
To study the limit point properties of \eqref{GEG}, we focus on stable fixed-points of \eqref{GEG}, since with random initialisation, \eqref{GEG} will almost surely escape unstable fixed points. This property has been proved for 
proper choice of $\gamma$ and $s$ 
in \cite[Thm 2]{farzin2025properties}. 
Next, we 
analyse the ODE approximations of \eqref{GEG}. 
In \cite{lu2022sr}, the authors constructed the $O(1)$- and $O(s)$-resolution ODEs of GEG for the special case 
$\gamma=\tau=1$. 
Applying Theorem~\ref{th:ODE} to the general case, the
$O(1)$- and $O(s)$-resolutions 
of \eqref{GEG} are 
\begin{align}\label{o1:GEG}
	\dot{z} &= -\gamma\Lambda_\tau F(z), \quad \text{and}\\
\begin{split}\label{os:GEG}
	\dot{z} & = -\gamma\Lambda_\tau F(z) +s(\gamma\Lambda_\tau\nabla F(z)\Lambda_\tau F(z)-\tfrac{\gamma^2}{2}\Lambda_\tau\nabla F(z)\Lambda_\tau F(z))\\
	&=(-I+(1-\tfrac{\gamma}{2})s\Lambda_\tau \nabla F(z))\gamma\Lambda_\tau F(z),
\end{split}
\end{align}
respectively.
The details of constructing \eqref{o1:GEG} and \eqref{os:GEG} can be found in Appendix~\ref{app:o}. In the next theorem, we analyse the properties of the limit points of \eqref{o1:GEG} and \eqref{os:GEG}.
\begin{theorem}\label{thm:GEG_con}
	Under Assumption~\ref{as:inv}, let 
    $z^\ast$ be a saddle point of $f,$ 
and define
\begin{align}
           M = \left\{\begin{array}{cl}
              \frac{|\max_{\lambda\in\sigma(\Lambda_\tau H(z^\ast))}\{\Re(\lambda)\}|}{\max\{L^2,\frac{L^2}{\tau^2}\}}\qquad &\text{if }\quad  \max_{\lambda \in \sigma(\Lambda_\tau H(z^\ast)))}\Re(\lambda) <0\\
             \infty \qquad&\text{if }\quad \max_{\lambda \in \sigma(\Lambda_\tau H(z^\ast)))}\Re(\lambda)  =0
       \end{array} \right. \label{eq:def_M}
\end{align}

	\begin{itemize}
		\item[(i)]  Consider the $O(1)$-resolution ODE of \eqref{GEG} in \eqref{o1:GEG} and let $\tau>0$ and $\gamma>0$.  The set of saddle points of $f$ is a subset of the exponentially stable equilibria of \eqref{o1:GEG} if $\Re(\lambda)\not=0$ for $\lambda\in\sigma(\Lambda_\tau H(z^\ast)).$ 
		\item[(ii)] Consider the $O(s)$-resolution ODE of \eqref{GEG} in \eqref{os:GEG} and let  $\tau>0.$  The set of saddle points of $f$ is a subset of the exponentially stable equilibria of \eqref{os:GEG} if $0<\gamma<2$ and 
        $0<s<\min\left \{M,\min\{L^{-1},\tfrac{\tau}{L}\} \right \}$.
	\end{itemize}
\end{theorem}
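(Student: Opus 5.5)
The plan is to verify exponential stability through linearisation at the saddle point $z^\ast$ and then to locate the eigenvalues of the Jacobian using Lemma~\ref{negeig} together with Assumption~\ref{as:inv}. By Proposition~\ref{locNash}, $z^\ast$ is a critical point, so $F(z^\ast)=0$. Hence $z^\ast$ is an equilibrium of both \eqref{o1:GEG} and \eqref{os:GEG}. Exponential stability will follow from the standard eigenvalue criterion (Theorem~\ref{thm:stability_of_equilibria}) once every eigenvalue of the Jacobian at $z^\ast$ has negative real part. Throughout I use $\nabla F(z)=-H(z)$, which follows from \eqref{eq:DF_def}.

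For (i), the Jacobian of $-\gamma\Lambda_\tau F$ at $z^\ast$ is $\gamma\Lambda_\tau H(z^\ast)$, whose eigenvalues are $\gamma\lambda$ with $\lambda\in\sigma(\Lambda_\tau H(z^\ast))$. Lemma~\ref{negeig} gives $\Re(\lambda)\le 0$. The hypothesis $\Re(\lambda)\neq 0$ then gives $\Re(\gamma\lambda)<0$ because $\gamma>0$, and this proves (i).

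For (ii), I differentiate $\big(-I+(1-\tfrac{\gamma}{2})s\Lambda_\tau\nabla F(z)\big)\gamma\Lambda_\tau F(z)$ at $z^\ast$. Since $F(z^\ast)=0$, the term containing the derivative of $\nabla F$ drops out. Only $\nabla F$ itself needs to be continuous, which holds because $f\in C^2$. Writing $c=1-\tfrac{\gamma}{2}\in(0,1)$ for $0<\gamma<2$, the Jacobian is
\[
J=\gamma\,(I+cs\Lambda_\tau H(z^\ast))\,\Lambda_\tau H(z^\ast),
\]
a polynomial in $\Lambda_\tau H(z^\ast)$. Its eigenvalues are therefore $\gamma(\lambda+cs\lambda^2)$ for $\lambda=a+\jmath b\in\sigma(\Lambda_\tau H(z^\ast))$, with real part
\[
\gamma\big(a+cs(a^2-b^2)\big).
\]
Assumption~\ref{as:inv}(i) and the invertibility of $\Lambda_\tau$ give $\lambda\neq 0$. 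I then split into two cases.

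In the first case $a=0$, so $b\neq 0$ and the real part equals $-\gamma cs b^2<0$. This case needs $c>0$, i.e.\ $\gamma<2$, and is where the $O(s)$ term repairs the purely imaginary eigenvalues that defeat (i).

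In the second case $a<0$, and it suffices that $csa^2<|a|$, i.e.\ $s<1/(c|a|)$. By Lipschitz continuity of $\nabla f$ we have $\|\nabla^2 f\|\le L$, so $|a|\le|\lambda|\le\|\Lambda_\tau\|\,\|H(z^\ast)\|\le\max\{1,\tau^{-1}\}L$. This gives $1/(c|a|)>\min\{L^{-1},\tau/L\}$. Equivalently, $|a|/a^2\ge |a|/\max\{L^2,L^2/\tau^2\}\ge M$ by the definition \eqref{eq:def_M}, where $M$ uses the least negative real part. Either bound is ensured by $s<\min\{M,L^{-1},\tau/L\}$, and the case $M=\infty$ is covered by the $L^{-1},\tau/L$ terms.

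The main obstacle is bookkeeping rather than depth. The points to check are the sign conventions relating $\nabla F$ and $H$, that the derivative of the $\nabla F(z)$ factor vanishes at $z^\ast$ (so no third derivatives of $f$ are needed), and the mixed case where some eigenvalues are purely imaginary and others have negative real part. All eigenvalues of $J$ then have negative real parts, and (ii) follows.
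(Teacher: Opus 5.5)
Your proposal is correct and follows essentially the same route as the paper: linearise at $z^\ast$, use $F(z^\ast)=0$ to get $J=\gamma(I+cs\Lambda_\tau H(z^\ast))\Lambda_\tau H(z^\ast)$ with eigenvalues $\gamma(\lambda+cs\lambda^2)$, and then run a case analysis on the real part using Lemma~\ref{negeig}, invertibility of the Hessian, and the bound $|\lambda|\le\max\{1,\tau^{-1}\}L$. Your split into $a=0$ versus $a<0$ (discarding the term $-csb^2\le 0$) is tidier than the paper's split by the sign of $a^2-b^2$. It shows that $0<\gamma<2$ together with $s<\min\{L^{-1},\tau/L\}$ already suffices, which also covers the mixed case with purely imaginary eigenvalues ($M=\infty$) without further work.
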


A proof of Theorem~\ref{thm:GEG_con} can be found in Appendix~\ref{app:main}.
Thus, 
it can be concluded that, for proper choice of the step size $s$ and under Assumption~\ref{as:inv}, 
the set of saddle points of $f$ is a subset of the exponentially stable fixed points of $O(s)$-resolution ODEs of \eqref{GEG}. 
Saddle points of $f$ are common equilibria of \eqref{dyn:TTGDA}, \eqref{o1:TTGDA}, and \eqref{os:TTGDA}. Thus,
leveraging Theorem~\ref{thm:stab} we can obtain the connection between the fixed points of \eqref{dyn:GEG} and saddle points of $f$.
\begin{corollary}\label{coro:GEG}
	Let the assumptions of Theorems~\ref{thm:GEG_con} and~\ref{thm:stab} be satisfied. Let $\alpha_{1y}=s.$ 
    There exists $s^\ast>0,$ such that for all $s\in(0,s^\ast),$ the set of saddle points of $f$ is a subset of the exponentially stable equilibria of \eqref{dyn:GEG}. 
\end{corollary}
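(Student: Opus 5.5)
The plan is to combine Theorem~\ref{thm:GEG_con}(ii) with Theorem~\ref{thm:stab}, applied separately at each saddle point $z^\ast$ of $f$. The argument has three steps.

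\textbf{Step 1: common equilibrium.} Fix a saddle point $z^\ast$. By Proposition~\ref{locNash}, $\nabla f(z^\ast)=0$, so $F(z^\ast)=0$. Substituting into \eqref{dyn:GEG} gives $z^\ast - s\Lambda_\tau F(z^\ast)=z^\ast$. Hence $w_s(z^\ast)=z^\ast-\gamma s\Lambda_\tau F(z^\ast)=z^\ast$ for every $s>0$. The right-hand side of \eqref{os:GEG} contains the factor $F(z)$, so it also vanishes at $z^\ast$. Thus $z^\ast$ is a common equilibrium of \eqref{dyn:GEG} and \eqref{os:GEG}.

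\textbf{Step 2: hypotheses of Theorem~\ref{thm:stab}.} The $O(s)$-resolution ODE \eqref{os:GEG} is produced by Theorem~\ref{th:ODE}. By Definition~\ref{def:ODE} it satisfies $|Z(s;z_0)-z_1(z_0)|=o(s^{2})$, which gives Assumption~\ref{as:error} with $r=2$. We need $w_0(z)=z$ and enough smoothness, and the corollary's standing assumptions (the hypotheses of Theorem~\ref{thm:stab}) supply the required $C^1$ regularity of $w_s$ and $W_s$. In substance, $w_s$ is a composition of $F$ with affine maps, and $W_s$ involves $\nabla F$, so a $C^2$ (for $w_s$) or $C^3$ (for the ODE field) objective suffices.

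\textbf{Step 3: combine.} Fix $\gamma\in(0,2)$ and $\tau>0$. For $s<\min\{M,L^{-1},\tau/L\}$, Theorem~\ref{thm:GEG_con}(ii) gives that $z^\ast$ is exponentially stable for \eqref{os:GEG}. Theorem~\ref{thm:stab} then yields some $s^\ast_{z^\ast}\in(0,1)$ such that $z^\ast$ is locally exponentially stable for \eqref{dyn:GEG} for all $s\in(0,s^\ast_{z^\ast}]$.

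\textbf{Main obstacle.} The difficulty is that $W_s$ itself depends on $s$, while Theorem~\ref{thm:stab} produces its threshold from a Lyapunov function for the ODE. I would handle this as follows:
\begin{itemize}
\item Apply Theorem~\ref{thm:stab} to the whole family $s\in(0,\bar s)$, with $\bar s<\min\{M,L^{-1},\tau/L\}$.
\item Argue that the exponential-stability constants of \eqref{os:GEG} are uniform in $s$ on this interval, since the Jacobian $(-I+(1-\tfrac{\gamma}{2})s\Lambda_\tau H)\gamma\Lambda_\tau H$ at $z^\ast$ depends continuously on $s$ and stays Hurwitz.
\item Take $s^\ast=\min\{\bar s,s^\ast_{z^\ast}\}$.
\end{itemize}

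A second obstacle is uniformity over the saddle points. If there are infinitely many, the threshold $s^\ast$ could depend on $z^\ast$, through $M$ and the local constants. Under Assumption~\ref{as:inv}(i) the saddle points are nondegenerate critical points and hence isolated, but the corollary's ``there exists $s^\ast$'' may still have to be read pointwise, one $s^\ast$ per saddle point. I would state the result that way, or assume finitely many saddle points in the relevant region so that the minimum of the thresholds is positive.
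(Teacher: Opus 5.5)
Your Steps 1--3 coincide with the paper's proof. That proof consists only of observing that saddle points are common equilibria of \eqref{dyn:GEG} and \eqref{os:GEG}, then invoking Theorem~\ref{thm:GEG_con}(ii) together with Theorem~\ref{thm:stab}. The paper discusses neither of the two issues you raise. Both issues are real, but your repair of the first one fails.

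A minor slip first: since $\nabla F=-H$, the Jacobian of \eqref{os:GEG} at $z^\ast$ is $(I+(1-\tfrac{\gamma}{2})s\Lambda_\tau H)\gamma\Lambda_\tau H$. With your sign, an eigenvalue $\kappa$ with $\Re(\kappa)<0$ would give real part $\approx-\gamma\Re(\kappa)>0$.

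The substantive problem is the limit $s\to0$, where this Jacobian tends to $\gamma\Lambda_\tau H(z^\ast)$.
\begin{itemize}
\item That limit is not Hurwitz when $\Lambda_\tau H(z^\ast)$ has an eigenvalue $\kappa=bj$ with $b\neq0$. This is exactly the case in which the $O(s)$-resolution ODE is needed, since the corollary imposes no condition $\Re(\kappa)\neq0$.
\item The corresponding eigenvalue of the ODE Jacobian has real part $-\gamma s(1-\tfrac{\gamma}{2})b^2$. So \eqref{os:GEG} decays at rate $O(s)$, and any Lyapunov function as in \eqref{eq:conL1}--\eqref{eq:conL3} must have $c_3/c_2=O(s)$.
\item Hence no uniform constants exist on $(0,\bar s)$. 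Being Hurwitz on an open interval gives no uniform bound when the endpoint limit is not Hurwitz.
\item Worse, since one may take $c_2\le c_4/2$, this forces $c_3\le Csc_4$. The ratio of the gain $\tfrac{c_3s}{2}|z|^2$ to the penalty $c_4L_0s^2|z|^2$ in \eqref{eq:3} then stays bounded as $s\to0$. Shrinking $s$ no longer closes the estimate in the proof of Theorem~\ref{thm:stab}, so Theorem~\ref{thm:stab} does not actually deliver the threshold $s^\ast_{z^\ast}$ in $s^\ast=\min\{\bar s,s^\ast_{z^\ast}\}$.
\end{itemize}

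The clean repair bypasses the ODE altogether.
\begin{itemize}
\item Since $F(z^\ast)=0$, we have $\frac{\partial w_s}{\partial z}(z^\ast)=I+\gamma s\Lambda_\tau H+\gamma s^2(\Lambda_\tau H)^2$ with $H=H(z^\ast)$. Its eigenvalues are $\mu=1+\gamma u+\gamma u^2$, where $u=s\kappa=p+qj$ and $\kappa\in\sigma(\Lambda_\tau H(z^\ast))$.
\item A direct expansion gives $|\mu|^2-1=2\gamma p(1+\gamma|u|^2)+\gamma(2+\gamma)p^2-\gamma(2-\gamma)q^2+\gamma^2|u|^4$.
\item Use $p\le0$ (Lemma~\ref{negeig}), $u\neq0$ (Assumption~\ref{as:inv}(i)) and $0<\gamma<2$. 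If $|u|\le\varepsilon(\gamma)$ is small enough, the bounds $p^2\le\varepsilon|p|$ and $|u|^4\le\varepsilon^2|u|^2$ make this strictly negative. Theorem~\ref{thm:stability_of_equilibria}(i) then yields exponential stability for \eqref{dyn:GEG}.
\item Since $|\kappa|\le\|\Lambda_\tau H(z^\ast)\|\le\max\{1,\tau^{-1}\}L$ at every saddle point, $s^\ast=\varepsilon(\gamma)\min\{1,\tau\}/L$ works for all saddle points simultaneously.
\end{itemize}
This route also removes your second caveat, which is a fair criticism of the paper's pointwise use of Theorem~\ref{thm:stab}. It needs only $f\in C^2$.

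If you want to keep the ODE route instead, you need a Lyapunov function with $c_1,c_2,c_4$ independent of $s$ and $c_3\ge cs$. You must then compare $V$ along the exact flow over $[0,s]$, where no quadratic penalty arises, and control the one-step error by $|w_s(z)-Z(s;z)|=o(s^2)\,|z-z^\ast|$. That last bound uses that both maps fix $z^\ast$.
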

The stability properties of \eqref{dyn:GEG} have
been studied before and Corollary~\ref{coro:GEG} states the same results 
as \cite[Thm. 4]{farzin2025properties}.

\subsubsection{(Two-Timescale) Proximal Point Method}

We investigate the properties of the fixed points of the TT-PPM, whose iterates are defined as 
\begin{align}\tag{TT-PPM}
    \begin{split}\label{TTPPM}
		x_{k+1} = x_k -\alpha_{x} \nabla_x f(x_{k+1},y_{k+1})\\ 
		y_{k+1} = y_k +\alpha_{y} \nabla_y f(x_{k+1},y_{k+1}),
	\end{split}
\end{align}
where $\alpha_{x}$ and $\alpha_{y}$ are positive step-sizes. Here, we consider 
$\alpha_{y}=s$ and $\alpha_{x}=s/\tau$ for 
positive constants $s,\tau \in \mathbb{R}_{>0}$. 
Then, \ref{TTPPM}
can be formulated 
as 
\begin{align}
	z_{k+1} = z_k - s\Lambda_\tau F(z_{k+1})
\qquad \text{or} \qquad
\label{dyn:TTPPM}
	z_{k+1} = (I+s\Lambda_\tau F)^{-1}(z_k)
\end{align}
where $\Lambda_\tau$ and $F$ are defined in \eqref{eq:F_definition} and $(I+s\Lambda_\tau F)^{-1}(\cdot)$ denotes the inverse mapping of $z\mapsto z+s\Lambda_\tau F(z)$. 
To the best of our knowledge,
\eqref{TTPPM} and \eqref{dyn:TTPPM} have not been studied from the viewpoint of dynamical systems before. 
To study the limiting properties, we focus on stable fixed-points of~\eqref{TTPPM}. First, we show that with random initialisation, \eqref{TTPPM} will almost surely escape unstable fixed points.
\begin{proposition}\label{prop:TTPPM_uns}
	Let Assumption~\ref{as:inv} be satisfied. For any $\tau>0$,  if the step size is selected as 
    $\alpha_{y}=s\in(0,\frac{1}{L}\min\{1,\tau\})$
    (and $\alpha_x= s/\tau$), then the set of initial points $z_0$ so that \eqref{TTPPM} converges to an unstable fixed point is of Lebesgue measure zero.
\end{proposition}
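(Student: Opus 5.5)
We follow the standard center–stable manifold route used for GDA in \cite[Thm 2.2]{daskalakis2017training} and for GEG in \cite{farzin2025properties}. The update map is $g_s := (I+s\Lambda_\tau F)^{-1}$. The claim follows from the Lee et al.\ / Panageas--Piliouras result: if $g:\R^d\to\R^d$ is a $C^1$ diffeomorphism, then the set of initial points converging to fixed points $z^*$ with $\rho\big(\tfrac{\partial g}{\partial z}(z^*)\big)>1$ has Lebesgue measure zero. This uses the centre-stable manifold theorem together with a Lindelöf countable-cover argument and the fact that preimages of null sets under a $C^1$ diffeomorphism are null. The plan is therefore to establish three ingredients: (a) $g_s$ is well defined and a global $C^1$ diffeomorphism for $s<\frac1L\min\{1,\tau\}$; (b) unstable fixed points of \eqref{TTPPM} are precisely the points where the Jacobian has spectral radius larger than one; (c) the invocation of the cited theorem.

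For (a), let $\phi(z)=z+s\Lambda_\tau F(z)$. By Assumption~\ref{as:inv}(ii), $F$ is $L$-Lipschitz, so $\|s\Lambda_\tau\nabla F(z)\|\le s\max\{1,\tau^{-1}\}L<1$ under the step-size condition. Hence $z\mapsto z-s\Lambda_\tau F(z)+ \text{const}$ type fixed-point maps are contractions. More concretely, for each $v$ the equation $\phi(z)=v$ is equivalent to $z=v-s\Lambda_\tau F(z)$, and the right-hand side is a contraction in $z$, so it has a unique solution. This shows $\phi$ is a bijection. Moreover $\nabla\phi(z)=I+s\Lambda_\tau\nabla F(z)$ is invertible by a Neumann series, since its perturbation has norm less than one. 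By the inverse function theorem, $g_s=\phi^{-1}$ is $C^1$ with $\nabla g_s(z)=(I+s\Lambda_\tau\nabla F(g_s(z)))^{-1}$, which is itself invertible. Thus $g_s$ is a $C^1$ diffeomorphism, and in particular $\det\nabla g_s\neq0$ everywhere. One point to check carefully is that Assumption~\ref{as:inv}(ii) only gives Lipschitz continuity of $\nabla f$, so $F\in C^1$ because $f\in C^2$, and the bound $\|\nabla F\|\le L$ holds pointwise.

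For (b), fixed points of $g_s$ are exactly the zeros of $F$, i.e.\ the critical points. At such a point $z^*$ the Jacobian is $J=(I-s\Lambda_\tau H(z^*))^{-1}$, using $\nabla F=-H$. Its eigenvalues are $1/(1-s\mu)$ for $\mu\in\sigma(\Lambda_\tau H(z^*))$. Following the convention of \cite{daskalakis2017training}, we call $z^*$ unstable when $\rho(J)>1$. This is the key step where care is needed: if one instead uses the Lyapunov notion from Definition~\ref{def:stab_discr}, the hyperbolic case (some $|1/(1-s\mu)|>1$) gives $\rho(J)>1$ directly. The borderline case $\rho(J)=1$ would require invertibility of the Hessian. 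Assumption~\ref{as:inv}(i) applies only at saddle points, so I would state the result for fixed points with $\rho(J)>1$, matching the cited prior work.

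For (c), let $\mathcal{U}$ be the set of such unstable fixed points. For each $z^*\in\mathcal{U}$, the centre-stable manifold theorem (applicable because $g_s$ is a $C^1$ local diffeomorphism) yields a neighbourhood $B_{z^*}$ and a local centre-stable manifold $W_{z^*}$ of dimension less than $d$, hence of measure zero. Every orbit that converges to $z^*$ and eventually stays in $B_{z^*}$ lies in $W_{z^*}$. Taking a countable subcover $\{B_{z_i}\}$ of $\bigcup B_{z^*}$ by Lindelöf, the set of initial points converging to $\mathcal{U}$ is contained in $\bigcup_{i}\bigcup_{k\ge0} g_s^{-k}(W_{z_i})$. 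Because $g_s^{-1}=\phi$ is $C^1$ and hence locally Lipschitz, it maps null sets to null sets. The set is therefore a countable union of null sets, which proves the claim. I expect step (a), the global invertibility of the implicit map, to be the main obstacle, and it is exactly where the step-size bound $s<\frac1L\min\{1,\tau\}$ is used.
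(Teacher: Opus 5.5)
Your proposal is correct and follows essentially the same route as the paper. The paper also differentiates the implicit relation $w_s(z)+s\Lambda_\tau F(w_s(z))=z$ to obtain $J(z)=(I-s\Lambda_\tau H(w_s(z)))^{-1}$, uses $s\|\Lambda_\tau H\|\le s\max\{1,\tau^{-1}\}L<1$ to conclude that the update is a local diffeomorphism, and then defers to the centre-stable manifold argument of \cite[Thm 2]{lee2019first}. Two of your additions make explicit what the paper leaves implicit: your contraction argument for the global well-posedness of $(I+s\Lambda_\tau F)^{-1}$, and your reading of ``unstable'' as $\rho(J)>1$, which is the notion that theorem actually covers.
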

A proof of Proposition~\ref{prop:TTPPM_uns} can be found in Appendix~\ref{app:main}. Proposition~\ref{prop:TTPPM_uns} shows that the manifold of initial points
converging to an unstable equilibrium has a co-dimension of at least 1 and is of a lower dimension than the ambient space.
Next, we will analyse the ODE approximations of \eqref{TTPPM}. We can use Theorem~\ref{th:ODE} to construct the $O(1)$- and $O(s)$-resolution ODEs of \eqref{TTPPM}, which are given by
\begin{align}\label{o1:TTPPM}
	\dot{z} &= -\Lambda_\tau F(z) \qquad \text{and} \\
\begin{split}\label{os:TTPPM}
	\dot{z} & = -\Lambda_\tau F(z) +s(\Lambda_\tau\nabla F(z)\Lambda_\tau F(z)-\tfrac{1}{2}\Lambda_\tau\nabla F(z)\Lambda_\tau F(z)) \\
	&=(-I+\tfrac{s}{2}\Lambda_\tau \nabla F(z))\Lambda_\tau F(z),
\end{split}
\end{align}
respectively.
The details of constructing \eqref{o1:TTPPM} and \eqref{os:TTPPM} can be found in Appendix~\ref{app:o}.
A special case of these constructions restricted to $\tau=1$ can be found in \cite{lu2022sr}.

\begin{theorem}\label{thm:TTPPM_con}
	Let Assumption~\ref{as:inv} be satisfied and $z^\ast$ be a saddle point of $f.$
	\begin{itemize}
		\item[(i)]  Consider the $O(1)$-resolution ODE of \eqref{TTPPM} in \eqref{o1:TTPPM} and let  $\tau>0$.  The set of saddle points of $f$ is a subset of the exponentially stable equilibria of \eqref{o1:TTPPM} if $\Re(\lambda)\not=0$ for all $\lambda\in\sigma(\Lambda_\tau H(z^\ast)).$ 
		\item[(ii)] Consider the $O(s)$-resolution ODE of \eqref{TTPPM} in \eqref{os:TTPPM} and let  $\tau>0.$  The set of saddle points of $f$ is a subset of the exponentially stable equilibria of \eqref{os:TTPPM} 
        if 
        $0<s<2\min\left \{M,\min\{L^{-1},\tfrac{\tau}{L}\} \right \}$,
        where $M$ is defined in \eqref{eq:def_M}.
    \end{itemize}
\end{theorem}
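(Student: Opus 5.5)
The plan is to linearise each ODE at a saddle point $z^\ast$, compute the spectrum of the Jacobian, and conclude exponential stability via the standard eigenvalue criterion (Theorem~\ref{thm:stability_of_equilibria}). Both ODEs \eqref{o1:TTPPM} and \eqref{os:TTPPM} vanish wherever $F(z)=0$. By Proposition~\ref{locNash}, every saddle point is critical, so $F(z^\ast)=0$ and $z^\ast$ is a common equilibrium. Since $\nabla F = -H$ by \eqref{eq:DF_def}, the Jacobian of \eqref{o1:TTPPM} at $z^\ast$ is $\Lambda_\tau H(z^\ast)$.

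\textbf{Part (i).} By Lemma~\ref{negeig}, every eigenvalue of $\Lambda_\tau H(z^\ast)$ has $\Re(\lambda)\le 0$. The hypothesis $\Re(\lambda)\neq 0$ then forces $\Re(\lambda)<0$, so the Jacobian is Hurwitz. Exponential stability of $z^\ast$ for \eqref{o1:TTPPM} follows from the linearisation theorem.

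\textbf{Part (ii), reduction.} Write the right-hand side of \eqref{os:TTPPM} as $G(z)=(-I+\tfrac s2\Lambda_\tau\nabla F(z))\Lambda_\tau F(z)$. Differentiating and using $F(z^\ast)=0$, the term containing $\nabla^2F$ is multiplied by $F(z^\ast)$ and drops out. This leaves
\begin{align*}
\nabla G(z^\ast)=(-I-\tfrac s2 A)(-A)=A+\tfrac s2A^2,\qquad A:=\Lambda_\tau H(z^\ast),
\end{align*}
which needs only $C^2$ regularity of $f$. By the spectral mapping theorem, the eigenvalues of $\nabla G(z^\ast)$ are $\mu=\lambda+\tfrac s2\lambda^2$ for $\lambda\in\sigma(A)$. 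With $\lambda=a+\jmath b$ and $a\le0$,
\begin{align*}
\Re(\mu)=a+\tfrac s2(a^2-b^2).
\end{align*}

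\textbf{Part (ii), bounding the eigenvalues.} The goal is $\Re(\mu)<0$ for every $\lambda\in\sigma(A)$. Assumption~\ref{as:inv}(ii) gives $\|H\|\le L$, hence $|\lambda|\le\|\Lambda_\tau\|L=\max\{1,\tau^{-1}\}L$ and $a^2\le|\lambda|^2\le\max\{L^2,L^2/\tau^2\}$. Using $-b^2\le 0$,
\begin{align*}
\Re(\mu)\le a+\tfrac s2 a^2 = a\bigl(1+\tfrac s2 a\bigr).
\end{align*}
\begin{itemize}
\item If $a<0$: it suffices that $1+\tfrac s2 a>0$, i.e.\ $s<2/|a|$. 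Since $|a|\le\max\{1,\tau^{-1}\}L$, this holds whenever $s<2\min\{L^{-1},\tau/L\}$. Alternatively, $s\,a^2/2<|a|$ holds once $s<2|a|/\max\{L^2,L^2/\tau^2\}$, which is the $M$-term in the bound. The stated bound on $s$ implies both sufficient conditions.
\item If $a=0$ (allowed in part (ii), which drops the hypothesis $\Re(\lambda)\neq0$): Assumption~\ref{as:inv}(i) makes $H(z^\ast)$ invertible, so $\lambda\neq0$ and $b\neq0$. Then $\Re(\mu)=-\tfrac s2 b^2<0$ for every $s>0$.
\end{itemize}
When all eigenvalues are purely imaginary, $M=\infty$ and only the $L$-constraint remains. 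In every case $\nabla G(z^\ast)$ is Hurwitz, which gives exponential stability.

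\textbf{Main obstacle.} The hardest step is the case analysis in part (ii) for $\lambda$ on or near the imaginary axis. One must check that the invertibility in Assumption~\ref{as:inv}(i) excludes $\lambda=0$, and that the constant $M$, built from the largest real part, controls every eigenvalue with $a<0$. This is subtle because the eigenvalue with the smallest $|a|$ gives the tightest constraint. I would first try to close this gap through the simpler bound $\Re(\mu)\le a(1+\tfrac s2 a)$ together with $|a|\le\max\{1,\tau^{-1}\}L$, which does not depend on $M$. If that works, the factor $2M$ in the bound serves only as additional margin.
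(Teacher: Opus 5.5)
Your proof is correct and follows the same route as the paper. You linearise \eqref{o1:TTPPM} and \eqref{os:TTPPM} at $z^\ast$, obtain the Jacobians $A$ and $A+\tfrac{s}{2}A^2$ with $A=\Lambda_\tau H(z^\ast)$, and reduce everything to $a+\tfrac{s}{2}(a^2-b^2)<0$ for each eigenvalue $a+bj$ of $A$. The difference lies in how that inequality is closed. The paper runs a five-way case split (real, purely imaginary, $|a|=|b|$, $a^2>b^2$, $a^2<b^2$). It takes the $L$-term from real eigenvalues and the $M$-term from the case $a^2>b^2$, using the loose estimate $a^2-b^2\le\max\{L^2,L^2/\tau^2\}$. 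By discarding $-b^2$ you need only the cases $a<0$ and $a=0$, and you show that $s<2\min\{L^{-1},\tau/L\}$ alone already suffices, essentially because $2|a|/(a^2-b^2)\ge 2/|a|$. This gains two things. First, the $M$-term is redundant: whenever $M<\infty$ one has $M\le\min\{L^{-1},\tau/L\}$, so the stated step-size condition can be relaxed to $s<2\min\{L^{-1},\tau/L\}$. Second, it cleanly covers eigenvalues with $a^2>b^2$ when $M=\infty$ because some other eigenvalue is purely imaginary. The paper's derivation of the $M$-term does not address that situation, though its $L$-term happens to cover it. Note that $M=\infty$ as soon as one eigenvalue is imaginary, not only when all are. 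In that situation your remark that the stated bound ``implies both'' sufficient conditions is false, but your primary argument does not rely on it. Finally, \eqref{os:TTPPM} contains $\nabla F$, so for $f\in C^2$ its right-hand side is differentiable at $z^\ast$ but need not be $C^1$ nearby. To invoke Theorem~\ref{thm:stability_of_equilibria} as stated, either assume $f\in C^3$ (as the paper tacitly does) or run the quadratic-Lyapunov argument directly.
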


A proof of Theorem~\ref{thm:TTPPM_con} can be found in Appendix~\ref{app:main}.
Hence, 
 for proper choice of the step size $s$ and under Assumption~\ref{as:inv}, it can be concluded that the set of saddle points of $f$ are a subset of the exponentially stable fixed points of $O(s)$-resolution ODEs of \eqref{TTPPM}.
Saddle points of $f$ are common equilibria of \eqref{dyn:TTGDA}, \eqref{o1:TTGDA}, and \eqref{os:TTGDA}
and we can thus state a result analogue to Corollary \ref{coro:GEG}.
 \begin{corollary}\label{coro:TTPPM}
	Let the assumptions of Theorems~\ref{thm:TTPPM_con} and \ref{thm:stab} be satisfied. Let $\alpha_{y}=s$, 
    $\tau>0$ 
    and $\alpha_{x}=s/\tau$.
    There exists $s^\ast>0$ such that for all $s\in(0,s^\ast),$ the set of saddle points of $f$ is a subset of the exponentially stable equilibria of \eqref{dyn:TTPPM}. 
\end{corollary}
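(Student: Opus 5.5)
The plan is to get an $s^\ast$ that is \emph{uniform} over the whole set of saddle points. The route through the $O(s)$-resolution ODE \eqref{os:TTPPM} does not give this by itself. The admissible range in Theorem~\ref{thm:TTPPM_con}(ii) depends on $z^\ast$ through $M$ in \eqref{eq:def_M}, and the $s^\ast$ produced by Theorem~\ref{thm:stab} depends on the Lyapunov data of the particular equilibrium. I will therefore use Theorems~\ref{thm:TTPPM_con} and~\ref{thm:stab} only as the guiding mechanism. The actual uniform step-size bound will come from the structure of the implicit update \eqref{dyn:TTPPM}, and I commit to the candidate $s^\ast=\frac{1}{L}\min\{1,\tau\}$, the same bound as in Proposition~\ref{prop:TTPPM_uns}.

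\textbf{Step 1: the update map is a well-defined $C^1$ map.} Fix $s\in(0,s^\ast)$. By Assumption~\ref{as:inv}(ii), $\|s\Lambda_\tau \nabla F(z)\|\le s\max\{1,\tau^{-1}\}L<1$ for all $z$. Consequently the map $z\mapsto z+s\Lambda_\tau F(z)$ is strongly monotone with a uniformly invertible Jacobian $I+s\Lambda_\tau\nabla F(z)$. Equivalently, for fixed $z_k$ the map $z\mapsto z_k-s\Lambda_\tau F(z)$ is a contraction. Hence $w_s=(I+s\Lambda_\tau F)^{-1}$ is globally defined, and by the implicit function theorem it is $C^1$ with
\[
\nabla w_s(z)=\big(I+s\Lambda_\tau\nabla F(w_s(z))\big)^{-1}.
\]
Its fixed points are exactly the zeros of $F$, i.e.\ the critical points of $f$. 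By Proposition~\ref{locNash}, every saddle point is therefore a fixed point for every $s\in(0,s^\ast)$; these are precisely the common equilibria of \eqref{dyn:TTPPM}, \eqref{o1:TTPPM} and \eqref{os:TTPPM}.

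\textbf{Step 2: spectral bound at every saddle point, uniformly in $s$.} Let $z^\ast$ be any saddle point. Using $\nabla F=-H$ from \eqref{eq:DF_def}, the Jacobian at $z^\ast$ is $(I-s\Lambda_\tau H(z^\ast))^{-1}$, whose eigenvalues are $\mu=1/(1-s\kappa)$ for $\kappa\in\sigma(\Lambda_\tau H(z^\ast))$. Lemma~\ref{negeig} gives $\Re(\kappa)\le 0$. Assumption~\ref{as:inv}(i) gives $\kappa\neq0$, since $\Lambda_\tau$ and $\mathrm{diag}(-I,I)$ are invertible. Therefore
\[
|1-s\kappa|^2=1-2s\Re(\kappa)+s^2|\kappa|^2\ \ge\ 1+s^2|\kappa|^2\ >\ 1,
\]
so $\rho\big(\nabla w_s(z^\ast)\big)<1$. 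The bound uses no quantity specific to $z^\ast$ beyond $\kappa\ne0$, so it holds simultaneously for all saddle points and all $s\in(0,s^\ast)$. This is exactly the mechanism behind the ODE statement: the $O(s)$ term $\tfrac{s}{2}\Lambda_\tau\nabla F\,\Lambda_\tau F$ in \eqref{os:TTPPM} stabilises even purely imaginary $\kappa$.

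\textbf{Step 3: conclude exponential stability.} By the linearisation criterion (Theorem~\ref{thm:stability_of_equilibria}) applied to the $C^1$ map $w_s$, each saddle point is a locally exponentially stable equilibrium of \eqref{dyn:TTPPM} for every $s\in(0,s^\ast)$. This yields a single $s^\ast$ for the whole set of saddle points. It is consistent with Theorem~\ref{thm:stab}, whose hypotheses (Assumption~\ref{as:error} via Theorem~\ref{th:ODE}, and exponential stability from Theorem~\ref{thm:TTPPM_con}(ii)) hold at each saddle point.

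The step I expect to be the main obstacle is Step~1: justifying global well-posedness and $C^1$-regularity of the implicit map with a bound on $s$ independent of the saddle point. I will handle it through the contraction/strong-monotonicity argument above, which uses only the global Lipschitz constant $L$. A secondary delicate point is eigenvalues of $\Lambda_\tau H(z^\ast)$ on the imaginary axis. These are handled by the strict inequality in Step~2, which needs only $\kappa\neq0$.
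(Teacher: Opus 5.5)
Your argument is correct, but it takes a different route from the paper.

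\textbf{The paper's route.} The paper obtains the corollary by chaining its general results. Theorem~\ref{thm:TTPPM_con}(ii) gives exponential stability of each saddle point for the $O(s)$-resolution ODE \eqref{os:TTPPM}. Theorem~\ref{th:ODE} supplies the closeness Assumption~\ref{as:error}. Theorem~\ref{thm:stab} then transfers exponential stability to \eqref{dyn:TTPPM} for $s$ below a non-explicit threshold coming from a converse Lyapunov function around the equilibrium.

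\textbf{Your route.} You bypass the resolution ODE and linearise the implicit update directly. The Jacobian at $z^\ast$ is $(I-s\Lambda_\tau H(z^\ast))^{-1}$, the same formula the paper derives in the proof of Proposition~\ref{prop:TTPPM_uns}. Its eigenvalues are $(1-s\kappa)^{-1}$, and $\Re(\kappa)\le 0$ together with $\kappa\neq 0$ gives $|1-s\kappa|^2\ge 1+s^2|\kappa|^2>1$. Theorem~\ref{thm:stability_of_equilibria}(i) finishes the proof.

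\textbf{What your approach buys.}
\begin{itemize}
\item It gives an explicit step-size bound $s^\ast=\min\{1,\tau\}/L$ that is uniform over all saddle points.
\item The spectral inequality itself holds for every $s>0$. The bound on $s$ is needed only for global well-posedness and $C^1$ regularity of the resolvent, which your contraction argument correctly supplies.
\item The paper's chain gives a threshold that a priori depends on $z^\ast$, through $M$ in \eqref{eq:def_M} and through the Lyapunov constants in Theorem~\ref{thm:stab}. So a single $s^\ast$ for a possibly infinite set of saddle points is not automatic there; your worry about uniformity is justified.
\item Your argument does not depend on Theorem~\ref{thm:stab} or on the resolution-ODE construction at all.
\end{itemize}

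\textbf{What the paper's approach buys.} It demonstrates the general stability-transfer principle. That principle applies to algorithms whose update map is hard to linearise. Through Theorem~\ref{thm:stab1_A}, its Lyapunov-based form also reaches degenerate settings like those in Section~\ref{sec:gen}, where linearisation is inconclusive.

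For this corollary, Assumption~\ref{as:inv}(i) already makes the fixed point hyperbolic for the discrete map. Your direct argument is therefore both shorter and sharper.
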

 To the best of our knowledge, the stability properties of \eqref{dyn:TTPPM} has not been studied 
from the perspective of dynamical systems before. 
The same can be said about DN, which is studied next.
 \subsubsection{Damped Newton}
 In this section, we investigate the properties of the fixed points of 
DN, which can be formulated as a discrete-time dynamical system as 
\begin{align}\tag{DN}\label{dyn:DN}
	z_{k+1} = z_k - s (\nabla F(z_k))^{-1} F(z_{k}),
\end{align}
where $F$ is defined in \eqref{eq:F_definition}. In this section, we need the assumption of invertibility of the Hessian of the objective function, not only at the saddle points, but everywhere.
\begin{assum}\label{as:inv2}
	Let $f: \mathbb{R}^{n+m}
	\rightarrow \mathbb{R}$ be a $C^2$ 
    function. 
	\begin{itemize}
		\item[(i)]  The Hessian $\nabla^2 f(z)$ is invertible for all $z\in\R^{n+m}$.
		\item[(ii)] Gradient $\nabla f$ is globally Lipschitz with constant $L>0.$ 
	\end{itemize}
\end{assum}
We focus on stable fixed-points of~\eqref{dyn:DN} and first mirror Proposition \ref{prop:TTPPM_uns}. 
\begin{proposition}\label{prop:DN_uns}
	Let Assumption~\ref{as:inv2} be satisfied. Suppose $|F(z)|<\infty$ and $\|\nabla^2 F(z)\|<\infty.$ There exists $s_{\max}>0,$ such that for all 
    $s\in(0,s_{\max}),$ the set of initial points  $z_0$ of 
    \eqref{dyn:DN} converging 
    to an unstable fixed point is of Lebesgue measure zero.
\end{proposition}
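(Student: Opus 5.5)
The plan is to use the standard center--stable manifold argument for escaping unstable fixed points, as in \cite{daskalakis2017training} and in the proofs of Propositions~\ref{prop:TTGDA_uns} and~\ref{prop:TTPPM_uns}. Write \eqref{dyn:DN} as $z_{k+1}=g_s(z_k)$ with $g_s(z)=z-s(\nabla F(z))^{-1}F(z)$. The key fact is the following: if $g_s$ is a $C^1$ local diffeomorphism, i.e.\ $\det Dg_s(z)\neq 0$ for all $z\in\R^{n+m}$, then the set of initial points converging to a fixed point $z^\ast$ at which $Dg_s(z^\ast)$ has an eigenvalue of modulus larger than one has Lebesgue measure zero. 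This follows from the center--stable manifold theorem, a countable (Lindel\"of) covering of the set of unstable fixed points, and the observation that preimages of null sets under a local diffeomorphism are null. So the proof reduces to verifying the local diffeomorphism property for small $s$. One also has to match ``unstable'' with the spectral condition; at fixed points $F(z^\ast)=0$, so the Jacobian there is simple, as shown next.

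First compute $Dg_s$. Since $\nabla F(z)=-H(z)=\mathrm{diag}(I,-I)\nabla^2 f(z)$, Assumption~\ref{as:inv2}(i) gives that $\nabla F(z)$ is invertible everywhere. By the product rule,
\begin{align*}
Dg_s(z)= I - s\big(I + D[(\nabla F(z))^{-1}]\,F(z)\big) = (1-s)I - s\,R(z),
\end{align*}
where $R(z)$ collects the term $-(\nabla F)^{-1}\,[\nabla^2F(z)\,\cdot\,]\,(\nabla F)^{-1}F(z)$. At a fixed point $F(z^\ast)=0$, so $Dg_s(z^\ast)=(1-s)I$. Differentiating $(\nabla F)^{-1}$ requires $F\in C^2$; I would take this to be implicit in the hypothesis $\|\nabla^2F\|<\infty$.

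The main step is to show $\det Dg_s(z)\neq0$ uniformly in $z$. Using the bounds $|F(z)|\le C_F$ and $\|\nabla^2F(z)\|\le C_2$ together with a uniform bound $\|(\nabla F(z))^{-1}\|\le \mu$, we get $\|R(z)\|\le \mu^2C_2C_F=:K$. Then $Dg_s(z)=(1-s)\big(I-\tfrac{s}{1-s}R(z)\big)$ is invertible whenever $\tfrac{s}{1-s}K<1$, which gives $s_{\max}=\min\{1,1/(1+K)\}$. The obstacle is the uniform bound $\mu$: Assumption~\ref{as:inv2} gives pointwise invertibility only. I would try to read the hypothesis ``Hessian invertible'' together with the boundedness hypotheses as a uniform bound $\sup_z\|(\nabla^2 f(z))^{-1}\|<\infty$, stating this as the implicit interpretation. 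If it is not available, I would fall back to a local version: for each fixed $s$ the singular set $\{z:\det Dg_s(z)=0\}$ is a closed set that avoids a neighbourhood of the fixed points. The measure-zero argument needs $g_s^{-1}$ to preserve null sets along the whole trajectory, so the singular set itself must be null; one could argue this via analyticity in $s$ for each $z$ and Fubini, but that route is more delicate.

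Finally, the notion of instability has to be handled. With $Dg_s(z^\ast)=(1-s)I$ and $s\in(0,1)$, linearisation shows every fixed point is locally exponentially stable for \eqref{dyn:DN}, so the set of unstable fixed points is empty and the claim holds trivially. The general covering argument is still recorded so that the statement also covers fixed points at which the linearisation is inconclusive. I expect the uniform invertibility bound in the previous step to be the only genuine difficulty; the rest is routine.
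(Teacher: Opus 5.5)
Your proposal is correct, but the complete argument is the one in your last paragraph, and that is a different route from the paper's.

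The paper follows your main line. It requires $s\|I-(\nabla F(z))^{-1}\nabla^2F(z)(\nabla F(z))^{-1}F(z)\|<1$ to get a local diffeomorphism, sets $s_{\max}=1/(1+|F(z)|\,\|\nabla^2F(z)\|/L^2)$ (your $1/(1+K)$ with $\mu=1/L$), and then defers to \cite[Thm 2]{lee2019first}. The obstacle you flag is real, and it is exactly where that argument is loose. First, $\|\nabla F(z)\|\le L$ only gives $\|(\nabla F(z))^{-1}\|\ge 1/L$, not $\le 1/L$. Second, that $s_{\max}$ depends on $z$, so a uniform step size needs bounds that Assumption~\ref{as:inv2} does not supply.

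Your observation sidesteps all of this. Every fixed point of \eqref{dyn:DN} has $F(z^\ast)=0$, so $Dg_s(z^\ast)=(1-s)I$. For every $s\in(0,2)$ (not only $(0,1)$), Theorem~\ref{thm:stability_of_equilibria}(i) then makes every fixed point exponentially stable. Hence there are no unstable fixed points, and the set in the statement is empty. This argument:
\begin{itemize}
\item needs no diffeomorphism property and no appeal to \cite{lee2019first};
\item does not even need $f\in C^3$, since $g_s(z)-z^\ast=(1-s)(z-z^\ast)+o(|z-z^\ast|)$ already gives a local contraction when $f\in C^2$;
\item shows the proposition is vacuous for DN, in line with the paper's own computation $J(z^\ast)=-I$ in Theorem~\ref{thm:DN_con}, which holds at every critical point, not only at saddles.
\end{itemize}
The centre--stable manifold machinery only earns its keep for methods with genuinely unstable fixed points, such as TT-GDA, TT-PPM and RDN.

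So lead with the short argument and drop the rest. Your remark about fixed points ``at which the linearisation is inconclusive'' is moot, since there are none. The uniform bound on $\|(\nabla F)^{-1}\|$ is not a difficulty for this statement at all.
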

A proof of Proposition~\ref{prop:DN_uns} can be found in Appendix~\ref{app:main}. 
Next, we 
analyse the ODE approximations of \eqref{dyn:DN} and 
use Theorem~\ref{th:ODE} to construct the $O(1)$- and $O(s)$-resolution ODEs of \eqref{dyn:DN}. 
The
$O(1)$- and  $O(s)$-resolution of \eqref{dyn:DN} are, respectively, 
\begin{align}\label{o1:DN}
	\dot{z} &= -(\nabla F(z))^{-1}F(z) \qquad \text{and} \\
\label{os:DN}
    \dot{z}  &= -(\nabla F(z))^{-1}F(z)+\tfrac{s}{2}((\nabla F(z))^{-1}\nabla^2F(z)(\nabla F(z))^{-1}F(z)-I)(\nabla F(z))^{-1}F(z)\\
	&=\!(\!-I-\tfrac{s}{2}\!)(\!(\nabla F(z))^{-1}F(z)\!)\!+\!\tfrac{s}{2}(\!(\nabla F(z))^{-1}\nabla^2F(z)(\nabla F(z))^{-1}F(z)\!)(\!(\nabla F(z))^{-1}F(z)\!). \nonumber
\end{align}
The details of the construction of 
\eqref{o1:DN} and \eqref{os:DN} can be found in Appendix~\ref{app:o}.
The next theorem provides 
properties of the limit points of \eqref{o1:DN} and \eqref{os:DN}.
 \begin{theorem}\label{thm:DN_con}
	Let Assumption~\ref{as:inv2} be satisfied and $z^\ast$ be a saddle point of $f.$
	\begin{itemize}
		\item[(i)]  Consider the $O(1)$-resolution ODE of DN in \eqref{o1:DN}.  The set of saddle points of $f$ is a subset of the exponentially stable equilibria of \eqref{o1:DN}. 
		\item[(ii)] Consider the $O(s)$-resolution ODE of DN  in \eqref{os:DN} and let $s>0$.  The set of saddle points of $f$ is a subset of the exponentially stable equilibria of \eqref{os:DN}.
	\end{itemize}
\end{theorem}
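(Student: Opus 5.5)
Both parts follow from linearising the two ODEs at a saddle point $z^\ast$ and applying the eigenvalue criterion for exponential stability (Theorem~\ref{thm:stability_of_equilibria}). Since $z^\ast$ is a saddle point, Proposition~\ref{locNash} gives $\nabla f(z^\ast)=0$, hence $F(z^\ast)=0$. By Assumption~\ref{as:inv2}(i), $\nabla F(z)=\mathrm{diag}(I,-I)\nabla^2 f(z)$ is invertible for every $z$. The right-hand sides of \eqref{o1:DN} and \eqref{os:DN} are therefore well defined near $z^\ast$, and both vanish there, so $z^\ast$ is a common equilibrium. I will take $f$ to be $C^3$ (implicit in the appearance of $\nabla^2F$ in \eqref{os:DN}), so that the vector fields are $C^1$ and can be linearised.

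\textbf{(i)} Write $G(z)=(\nabla F(z))^{-1}F(z)$. Differentiating the product gives
\[
\nabla G(z)=(\nabla F(z))^{-1}\nabla F(z)+\bigl[\nabla (\nabla F)^{-1}(z)\bigr]F(z).
\]
The second term is multiplied by $F(z)$ and so vanishes at $z^\ast$. Hence $\nabla G(z^\ast)=I$, and the Jacobian of the right-hand side of \eqref{o1:DN} at $z^\ast$ is $-I$. Its spectrum is $\{-1\}$, so $z^\ast$ is locally exponentially stable. Note that this step does not use Lemma~\ref{negeig} or any sign condition on the Hessian: Newton's preconditioning normalises the linearisation.

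\textbf{(ii)} Write the right-hand side of \eqref{os:DN} as $-(1+\tfrac{s}{2})G(z)+\tfrac{s}{2}Q(z)$. Here $Q(z)=(\nabla F)^{-1}\nabla^2F[G(z)]\,G(z)$ is bilinear in $G(z)$, read as in the displayed formula with $\nabla^2F$ a bounded bilinear map. Since $G(z^\ast)=0$ and $Q$ is quadratic in $G$, we have $\nabla Q(z^\ast)=0$: every term of its derivative contains a factor $G(z^\ast)=0$. Therefore the Jacobian at $z^\ast$ is $-(1+\tfrac{s}{2})I$. Its eigenvalues are all $-(1+s/2)<0$ for every $s>0$, which gives local exponential stability with no step-size restriction.

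The main obstacle is step (ii): checking carefully that the second-order term in \eqref{os:DN} really is quadratic in $G$. The stray ``$-I$'' inside the bracket only contributes the $-\tfrac{s}{2}G$ part, so this reduces to a careful reading of the tensor contraction in \eqref{os:DN}. A secondary point is the regularity needed to differentiate $(\nabla F)^{-1}$. This follows from the inverse being a smooth function of $\nabla F$ on invertible matrices, together with $\nabla F\in C^1$.
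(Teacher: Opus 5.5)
Your proposal is correct and follows the paper's proof. The paper likewise computes the Jacobians of \eqref{o1:DN} and \eqref{os:DN}, uses $F(z^\ast)=0$ to reduce them to $-I$ and $-(1+\tfrac{s}{2})I$ at $z^\ast$, and concludes via Theorem~\ref{thm:stability_of_equilibria}. One small regularity slip: $f\in C^3$ makes \eqref{o1:DN} $C^1$, but \eqref{os:DN} contains $\nabla^2F$, so your product-rule argument for $\nabla Q(z^\ast)=0$ needs $f\in C^4$. Alternatively, you can use the bound $|Q(z)|\le C|z-z^\ast|^2$ (from $G(z^\ast)=0$ and local boundedness of $\nabla^2F$), a regularity point the paper's proof also glosses over.
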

A proof of Theorem~\ref{thm:DN_con} can be found in Appendix~\ref{app:main}.
It can be concluded that, for arbitrary 
step size $s>0$ and under Assumption~\ref{as:inv2}, 
the set of saddle points of $f$ are a subset of the exponentially stable fixed points of the $O(1)$- and the $O(s)$-resolution ODEs of \eqref{dyn:DN}. 
Since saddle 
points of $f$ are common equilibria of \eqref{dyn:DN}, \eqref{o1:DN}, and \eqref{os:DN}, we can again use 
Theorem~\ref{thm:stab} to obtain the following result.
 \begin{corollary}\label{coro:DN}
	Let the assumptions of Theorems~\ref{thm:stab} and \ref{thm:DN_con} be satisfied. There exists $s^\ast>0$ such that for all $s\in(0,s^\ast),$ the set of saddle points of $f$ is a subset of the exponentially stable equilibria of \eqref{dyn:DN}. 
\end{corollary}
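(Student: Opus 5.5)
Corollary~\ref{coro:DN} follows by combining Theorem~\ref{thm:DN_con} with Theorem~\ref{thm:stab}. Fix a saddle point $z^\ast$ of $f$. Take the discrete-time system to be \eqref{dyn:DN}, $w_s(z)=z-s(\nabla F(z))^{-1}F(z)$. Take the continuous-time system to be the $O(s)$-resolution ODE \eqref{os:DN}, with right-hand side $W_s(z)=f_0(z)+sf_1(z)$. We then verify, in order, the hypotheses of Theorem~\ref{thm:stab}: the $C^1$ regularity, Assumption~\ref{as:error}, the common equilibrium, and local exponential stability for the ODE.

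\textbf{Step 1: regularity and closeness.} Under Assumption~\ref{as:inv2}, $\nabla F(z)$ is invertible everywhere, and inversion is smooth on invertible matrices. Hence $w_s$ is $C^1$ as soon as $F$ and $\nabla F$ are; this amounts to $f\in C^3$ for $w_s$ and $f\in C^4$ for $W_s$, which involves $\nabla^2F$. I take this to be part of ``the assumptions of Theorem~\ref{thm:stab}'' adopted in the corollary. Next, $w_0(z)=z$ and $w_s$ is smooth in $s$ (indeed affine in $s$). So Theorem~\ref{th:ODE} applies, and by Definition~\ref{def:ODE} the ODE \eqref{os:DN} satisfies $|Z(s;z_0)-z_1(z_0)|=o(s^2)$. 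This is in particular $O(s^2)$, so Assumption~\ref{as:error} holds with $r=2$.

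\textbf{Step 2: common equilibrium.} By Proposition~\ref{locNash}, $\nabla f(z^\ast)=0$, and hence $F(z^\ast)=0$. Then $w_s(z^\ast)=z^\ast$. Moreover, every term of \eqref{os:DN} contains the factor $(\nabla F(z^\ast))^{-1}F(z^\ast)=0$, so $W_s(z^\ast)=0$. Thus $z^\ast$ is a common equilibrium for every $s>0$.

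\textbf{Step 3: exponential stability for the ODE and conclusion.} Theorem~\ref{thm:DN_con}(ii) states that $z^\ast$ is a locally exponentially stable equilibrium of \eqref{os:DN} for every $s>0$. Theorem~\ref{thm:stab} therefore gives $s^\ast\in(0,1)$ such that $z^\ast$ is locally exponentially stable for \eqref{dyn:DN} whenever $s\in(0,s^\ast]$.

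\textbf{Main obstacle: the family $W_s$ depends on $s$.} Theorem~\ref{thm:stab} is phrased for a continuous-time system $W_s$ that varies with $s$, and uses a converse Lyapunov function for it. I must therefore make sure the stability certificate is uniform for small $s$, rather than relying on a single fixed $s$. The cleanest route is to avoid the issue by working with the $s$-independent $O(1)$-resolution ODE \eqref{o1:DN} instead. It is exponentially stable at $z^\ast$ by Theorem~\ref{thm:DN_con}(i), since its linearisation there is $-I$. It also satisfies Assumption~\ref{as:error}: Definition~\ref{def:ODE} with $r=1$ gives closeness $o(s^2)$. Using \eqref{o1:DN} gives one fixed Lyapunov function, and Theorem~\ref{thm:stab} then applies directly.

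\textbf{Uniformity in $z^\ast$.} A second point is that $s^\ast$ might depend on $z^\ast$. The corollary asserts a single $s^\ast$ for the whole set of saddle points. Saddle points are isolated, because the Hessian is invertible there. On any compact region there are therefore finitely many, so we can take the minimum of the corresponding $s^\ast$. Alternatively, the statement can be read pointwise, as the earlier corollaries implicitly do.
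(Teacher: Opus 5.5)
Your argument is essentially the paper's: saddle points are common equilibria of \eqref{dyn:DN} and its resolution ODEs, Theorem~\ref{thm:DN_con} gives exponential stability for the ODE, and Theorem~\ref{thm:stab} transfers this to the DTA. Switching to the $s$-independent ODE \eqref{o1:DN} is a sensible way to avoid the $s$-dependence of $W_s$, which the paper's one-line justification passes over.

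One step of that switch needs repair. \eqref{o1:DN} is the $r=0$ case of Definition~\ref{def:ODE}, not $r=1$. The definition therefore only guarantees closeness $o(s)$, which falls short of Assumption~\ref{as:error}, since that assumption needs an exponent $r\geq 2$. Replace the citation with a direct estimate. Since $w_s(z)=z+sf_0(z)$ exactly, take a compact neighbourhood of $z^\ast$ on which $|f_0|\leq B$ and $f_0$ is Lipschitz with constant $L_0$. Then for $z$ near $z^\ast$ and $s\in(0,1)$ you get $|Z(s;z)-w_s(z)|\leq\int_0^s|f_0(Z(t;z))-f_0(z)|\,dt\leq \tfrac{1}{2}L_0Bs^2$. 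So Assumption~\ref{as:error} does hold with $r=2$.

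Finally, your finite-minimum argument only gives a common $s^\ast$ for saddle points in a compact region. What you and the paper actually establish is the pointwise statement, with $s^\ast$ depending on $z^\ast$.
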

To the best of our knowledge, the stability properties of \eqref{dyn:DN} has not been studied before from the perspective of dynamical systems. 
 \subsubsection{Regularised Damped Newton}
In this section, we investigate the properties of the fixed points of the RDN, which can be formulated as a discrete-time dynamical system as 
\begin{align}\tag{RDN} \label{dyn:RDN}
	z_{k+1} = z_k - s(\nabla F(z_k)+\phi_k I)^{-1}F(z_k),
\end{align}
where $F$ is defined in \eqref{eq:F_definition}.
Let $f$ satisfy Assumption~\ref{as:inv} and fix $\phi_k = \phi>L.$ Then, one can see that $(\nabla F(z)+\phi I)$ is 
invertible for all $z\in \R^d$, 
and the assumption on the 
invertibility of the Hessian 
in Assumption \ref{as:inv2} 
can be relaxed. 
To study the convergence properties of \eqref{dyn:RDN}, 
We focus on stable fixed-points of~\eqref{dyn:RDN}. 
 \begin{proposition}\label{prop:RDN_uns}
	Suppose $|F(z)|<\infty,$ $\|\nabla^2 F(z)\|<\infty,$ and Assumption~\ref{as:inv} holds. There exists $s_{\max}>0,$ where for $s\in(0,s_{\max}),$ the set of initial points $z_0$ so that \eqref{dyn:RDN} converges to an unstable fixed point is of Lebesgue measure zero.
\end{proposition}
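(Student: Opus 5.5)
The plan is to run the same argument as for Propositions~\ref{prop:TTGDA_uns}, \ref{prop:TTPPM_uns} and \ref{prop:DN_uns}: a center–stable manifold argument in the style of \cite[Thm 2.2]{daskalakis2017training}. That theorem states that if $g:\R^d\to\R^d$ is a $C^1$ local diffeomorphism, meaning $\det \nabla g(z)\neq 0$ for all $z$, then the set of initial points whose iterates converge to an unstable fixed point of $g$ has Lebesgue measure zero. Here the unstable fixed points are those $z^\ast$ with $\rho(\nabla g(z^\ast))>1$. So the whole task is to show that the RDN map
\[
g_s(z) = z - s\,(\nabla F(z)+\phi I)^{-1}F(z)
\]
is a $C^1$ map with invertible Jacobian everywhere when $s$ is small enough.

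\emph{Well-definedness and regularity.} Under Assumption~\ref{as:inv}(ii), $\|\nabla F(z)\|\le L$ for all $z$. Since $\phi>L$, every eigenvalue $\mu$ of $\nabla F(z)+\phi I$ satisfies $|\mu|\ge \phi-L>0$. Hence $B(z):=(\nabla F(z)+\phi I)^{-1}$ exists and is uniformly bounded:
\[
\|B(z)\|\le (\phi-L)^{-1}.
\]
This follows from the Neumann series $\phi^{-1}\sum_k(-\nabla F/\phi)^k$, using $\|\nabla F\|/\phi<1$. The standing hypothesis $\|\nabla^2 F\|<\infty$ (read as a uniform bound, say $\|\nabla^2F(z)\|\le K$) makes $F$ twice differentiable, so $z\mapsto B(z)$ is $C^1$. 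Its derivative comes from $\partial B = -B\,(\partial \nabla F)\,B$. Consequently $g_s$ is $C^1$.

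\emph{Jacobian bound.} Differentiating gives
\[
\nabla g_s(z) = I - s\big( B(z)\nabla F(z) + D(z)\big),
\]
where $D(z)$ is the term $-B(z)\,[\nabla^2F(z)(\cdot)]\,B(z)F(z)$ coming from differentiating $B$. Using $|F(z)|\le C_F$ (the hypothesis $|F|<\infty$ read uniformly), we obtain
\[
\|B\nabla F + D\| \le \frac{L}{\phi-L} + \frac{K C_F}{(\phi-L)^2} =: c.
\]
Set $s_{\max}=1/c$. For $s<s_{\max}$, $\|s(B\nabla F+D)\|<1$, so $\nabla g_s(z)$ is invertible for every $z$ by the Neumann series argument. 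Hence $g_s$ is a local diffeomorphism.

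\emph{Conclusion.} With invertibility in hand, apply the center–stable manifold theorem at each unstable fixed point. The set of points whose iterates converge to such a fixed point lies in a countable union of preimages under $g_s$ of local center–stable manifolds, each of dimension less than $d$. Local diffeomorphisms preserve null sets under preimage, so this union has Lebesgue measure zero, which is the claim.

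The main obstacle is the uniform bound on $D(z)$. It needs global bounds on $F$ and $\nabla^2F$, which is exactly why the statement adds them; I will interpret $|F(z)|<\infty$ and $\|\nabla^2F(z)\|<\infty$ as uniform in $z$. A secondary subtlety is that the dimension-reduction step of \cite{daskalakis2017training} needs only invertibility of $\nabla g_s$ and not global injectivity, so this weaker condition is all the argument requires.
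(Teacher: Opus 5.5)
Your proposal is correct and follows the paper's proof. Like the paper, you use a Neumann-series bound to show that the Jacobian $I - s\big((\nabla F+\phi I)^{-1}\nabla F - (\nabla F+\phi I)^{-1}\nabla^2F(\nabla F+\phi I)^{-1}F\big)$ is invertible everywhere for small $s$, and then invoke the local-diffeomorphism argument of \cite[Thm 2]{lee2019first}. Your constant is in fact the right one: $\|(\nabla F+\phi I)^{-1}\|\le (\phi-L)^{-1}$, whereas the paper's $s_{\max}$ puts $(L+\phi)$ in the denominators, which does not bound this inverse in general; likewise, your uniform reading of $|F|<\infty$ and $\|\nabla^2F\|<\infty$ (with $\nabla^2F$ continuous, so the map is $C^1$) is what makes $s_{\max}$ independent of $z$.
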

A proof of Proposition~\ref{prop:RDN_uns} can be found in Appendix~\ref{app:main}. 
With Theorem~\ref{th:ODE} the $O(1)$- and $O(s)$-resolutions of \eqref{dyn:RDN} are 
\begin{align}\label{o1:RDN}
	\dot{z} &= -(\nabla F(z)+\phi I)^{-1}F(z) \\
	\dot{z} & = \label{os:RDN}
	\Big(-I-\tfrac{s}{2}\big((\nabla F(z)+\phi I)^{-1}\nabla F(z)\\
    &\quad -(\nabla F(z)+\phi I)^{-1}\nabla^2F(z)(\nabla F(z)+\phi I)^{-1}F(z)\big)\Big)(\nabla F(z)+\phi I)^{-1}F(z). \notag
\end{align}
The details of constructing \eqref{o1:RDN} and \eqref{os:RDN} can be found in Appendix~\ref{app:o}.
In the next theorem, we will analyse the properties of the limit points of \eqref{o1:RDN} and \eqref{os:RDN}.
 \begin{theorem}\label{thm:RDN_con}
	Let Assumption~\ref{as:inv} be satisfied and $z^\ast$ be a saddle point of $f.$
	\begin{itemize}
		\item[(i)]  Consider the $O(1)$-resolution ODE of RDN in \eqref{o1:RDN}.  There exists $\phi_{\min}$ where for $\phi>\phi_{\min}$ the set of saddle points of $f$ is a subset of the exponentially stable equilibria of \eqref{o1:RDN}. 
		\item[(ii)] Consider the $O(s)$-resolution ODE of RDN in \eqref{os:RDN} and $\phi>L$. There exists $s_{\max}>0$ such that
        for $s\in(0,s_{\max})$, the set of saddle points of $f$ is a subset of the exponentially stable equilibria of \eqref{os:RDN}.
	\end{itemize}
\end{theorem}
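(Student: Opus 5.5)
Both parts rest on linearisation at the saddle point $z^\ast$ (Theorem~\ref{thm:stability_of_equilibria}). Exponential stability of $z^\ast$ follows once every eigenvalue of the Jacobian of the right-hand side at $z^\ast$ has negative real part. By Proposition~\ref{locNash}, $F(z^\ast)=0$, so $z^\ast$ is an equilibrium of both \eqref{o1:RDN} and \eqref{os:RDN}. We also have $\nabla F(z^\ast) = -H(z^\ast)$. For $\phi>L$ the matrix $\nabla F(z^\ast)+\phi I$ is invertible, since $\|\nabla F\|\le L$ by Assumption~\ref{as:inv}(ii). Every term multiplying $F(z)$ then has a derivative that vanishes at $z^\ast$, because it carries a factor $F(z^\ast)=0$.

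\textbf{Part (i).} Write $A=\nabla F(z^\ast)=-H(z^\ast)$. The Jacobian of \eqref{o1:RDN} at $z^\ast$ is $J_0=-(A+\phi I)^{-1}A$. Since $(A+\phi I)^{-1}$ and $A$ commute, the spectral mapping theorem gives $\sigma(J_0)=\{-\mu/(\mu+\phi):\mu\in\sigma(A)\}$. By Lemma~\ref{negeig} with $\tau=1$, every $\mu\in\sigma(A)$ satisfies $\Re(\mu)\ge 0$, and by Assumption~\ref{as:inv}(i) also $\mu\neq 0$. A direct computation gives
\[
\Re\!\Big(\frac{\mu}{\mu+\phi}\Big)=\frac{|\mu|^2+\phi\Re(\mu)}{|\mu+\phi|^2}>0 .
\]
Hence every eigenvalue of $J_0$ has negative real part, and this already holds for any $\phi>0$ (indeed for any $\phi$ making $A+\phi I$ invertible). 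Taking $\phi_{\min}=L$ ensures global invertibility of $\nabla F(z)+\phi I$, so the ODE is well defined and $C^1$ near $z^\ast$. Local exponential stability then follows.

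\textbf{Part (ii).} The right-hand side of \eqref{os:RDN} has the form $-(I+\tfrac{s}{2}B(z))(\nabla F(z)+\phi I)^{-1}F(z)$. Because $F(z^\ast)=0$, the Jacobian at $z^\ast$ is
\[
J_s=(I+\tfrac{s}{2}B(z^\ast))J_0 .
\]
The part of $B(z^\ast)$ containing $F(z^\ast)$ vanishes, so $B(z^\ast)=(A+\phi I)^{-1}A$ and $J_s=J_0-\tfrac{s}{2}J_0^2$. This matrix is again a function of $A$, namely $g(\mu)=\nu-\tfrac{s}{2}\nu^2$ with $\nu=-\mu/(\mu+\phi)$. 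Its real part is $\Re(\nu)-\tfrac{s}{2}(\Re(\nu)^2-\Im(\nu)^2)$. Since $\Re(\nu)<0$ strictly on the finite set $\sigma(A)$, and $|\nu|\le 1+\phi/|\mu+\phi|$ is bounded there, we can take
\[
s_{\max}=\min_{\mu\in\sigma(A)}\frac{2|\Re(\nu)|}{|\nu|^2} .
\]
For $s<s_{\max}$ every eigenvalue of $J_s$ has negative real part, and local exponential stability again follows.

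\textbf{Main obstacle.} The delicate step is checking that the $\nabla^2F$ term does not contribute to the Jacobian at $z^\ast$. This needs $F\in C^1$ with differentiable coefficients, which may require $f\in C^3$ rather than just $C^2$. I would handle it by the product rule, noting that the term is bilinear in $F(z)$. A second concern is that the Jacobian argument requires $B$ to be $C^1$. If only continuity is available, I would instead argue via a Lyapunov function for $J_0$ and treat the remaining terms as $o(|z-z^\ast|)$ perturbations.
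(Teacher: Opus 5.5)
Your proposal is correct and follows the same route as the paper: you linearise at $z^\ast$, use $F(z^\ast)=0$ to reduce the Jacobians to $J_0=(\phi I-H(z^\ast))^{-1}H(z^\ast)$ and $J_0-\tfrac{s}{2}J_0^2$, and check the signs of the real parts of their eigenvalues via Lemma~\ref{negeig} and Theorem~\ref{thm:stability_of_equilibria}. Your identity $\Re\big(\mu/(\mu+\phi)\big)=(|\mu|^2+\phi\Re(\mu))/|\mu+\phi|^2$ is cleaner and sharper than the paper's case analysis, whose real-part numerator in (i) should read $\phi a-a^2-b^2$ rather than $\phi a-a^2+b^2$; your identity shows that (i) holds for every $\phi>0$, and your single bound $s_{\max}=\min_{\mu\in\sigma(A)}2|\Re(\nu)|/|\nu|^2$ replaces the paper's case split in (ii).
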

A proof of Theorem~\ref{thm:RDN_con} can be found in Appendix~\ref{app:main}.
Thus in general, it can be concluded that, for proper choice of hyperparameters and under Assumption~\ref{as:inv}, 
the set of saddle points of $f$ are a subset of the exponentially stable fixed points of the $O(1)$- and the $O(s)$-resolution ODEs of \eqref{dyn:RDN}. 
As before, we get the following result stating a relationship between fixed points and saddle points.
 \begin{corollary}\label{coro:RDN}
	Let the assumption of Theorem~\ref{thm:RDN_con} and \ref{thm:stab} be satisfied. There exists $s^\ast>0$ such that for all $s\in(0,s^\ast),$ the set of saddle points of $f$ is a subset of the exponentially stable equilibria of \eqref{dyn:RDN}. 
\end{corollary}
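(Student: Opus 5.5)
The plan is to obtain Corollary~\ref{coro:RDN} by combining Theorem~\ref{thm:RDN_con}(ii) with Theorem~\ref{thm:stab}, exactly as in the preceding corollaries. We fix $\phi>L$ and a saddle point $z^\ast$ of $f$.

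The first step is to check that $z^\ast$ is a common equilibrium. By Proposition~\ref{locNash}, $\nabla f(z^\ast)=0$, so $F(z^\ast)=0$. For the DTA this gives $w_s(z^\ast)=z^\ast-s(\nabla F(z^\ast)+\phi I)^{-1}F(z^\ast)=z^\ast$. The right-hand side of \eqref{os:RDN} is a product with the factor $(\nabla F(z)+\phi I)^{-1}F(z)$, which vanishes at $z^\ast$. Invertibility of $\nabla F(z)+\phi I$ for all $z$ follows from $\|\nabla F(z)\|\le L<\phi$, using Assumption~\ref{as:inv}(ii).

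The second step is to verify the hypotheses of Theorem~\ref{thm:stab}.

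Assumption~\ref{as:error} follows from Theorem~\ref{th:ODE} and Definition~\ref{def:ODE}. Indeed, $w_0(z)=z$, and the $O(s)$-resolution ODE satisfies $|Z(s;z_0)-z_1(z_0)|=o(s^2)$, which gives the required bound with $r=2$. This needs $w_s$ to be sufficiently smooth in $(s,z)$. I expect this is where the implicit regularity sits: it requires $F$ to be $C^2$, since \eqref{os:RDN} contains $\nabla^2F$, and this is guaranteed by the hypothesis ``assumptions of Theorem~\ref{thm:stab} satisfied'', i.e.\ $w_s,W_s\in C^1$. The inverse map $z\mapsto(\nabla F(z)+\phi I)^{-1}$ is smooth wherever the matrix is invertible, which holds everywhere.

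The third step combines the two theorems. Theorem~\ref{thm:RDN_con}(ii) gives $s_{\max}>0$ such that, for $s\in(0,s_{\max})$, $z^\ast$ is exponentially stable for \eqref{os:RDN}. Theorem~\ref{thm:stab} then gives $s_1^\ast\in(0,1)$ such that, for $s\le s_1^\ast$, $z^\ast$ is exponentially stable for \eqref{dyn:RDN}. We take $s^\ast=\min\{s_{\max},s_1^\ast\}$.

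The main obstacle is that Theorem~\ref{thm:stab} is stated for a single system $W_s$, while the ODE \eqref{os:RDN} itself depends on $s$. The threshold it produces could therefore depend on the particular $s$, which would make the argument circular. I would handle this by revisiting the proof of Theorem~\ref{thm:stab} (the converse-Lyapunov argument) and checking that the Lyapunov constants for \eqref{os:RDN} can be chosen uniformly for $s$ in a compact subinterval $[0,\bar s]\subset[0,s_{\max})$. This should follow because the linearisation $-(\nabla F(z^\ast)+\phi I)^{-1}\nabla F(z^\ast)(I+O(s))$ depends continuously on $s$, so its spectral abscissa stays uniformly negative. Likewise, the constant in the $O(s^2)$ error is uniform on a compact neighbourhood of $z^\ast$.

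A second point is that the saddle-point set may be infinite, so $s^\ast$ should not depend on $z^\ast$. If the thresholds from Theorem~\ref{thm:RDN_con}(ii) are uniform, we take the infimum. Otherwise, I would read the statement pointwise: for each saddle point $z^\ast$ there is an $s^\ast$, as in Corollaries~\ref{coro:GEG}--\ref{coro:DN}.
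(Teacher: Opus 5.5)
Your proposal is correct and follows the paper's route. The paper obtains Corollary~\ref{coro:RDN} simply by noting that saddle points are common equilibria of \eqref{dyn:RDN} and its resolution ODEs, and then invoking Theorem~\ref{thm:RDN_con}(ii) together with Theorem~\ref{thm:stab}, with Assumption~\ref{as:error} supplied by the $O(s)$-resolution construction of Theorem~\ref{th:ODE}. Your extra care goes beyond the paper's one-line ``as before'' justification without changing the argument: you use the $O(s)$- rather than the $O(1)$-resolution (needed for $r\ge 2$), you ask for Lyapunov constants uniform as $s\to 0$ (which works here because the $s=0$ linearisation $(\phi I-H(z^\ast))^{-1}H(z^\ast)$ is Hurwitz), and you note that $s^\ast$ may depend on the saddle point.
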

 To the best of our knowledge, the stability properties of \eqref{dyn:RDN} have not been studied before from a dynamical systems  perspective.
A summary of the findings in this section 
is given in Table~\ref{tab:sum}.
 \begin{remark}
	The Jacobian Method was first introduced in \cite[Sec 2.3]{lu2022sr} and can be formulated as a discrete-time dynamical system as
	$z_{k+1} = z_k + s \nabla F(z_k) F(z_{k})$
	and its  $O(1)$-resolution has the from 
	$\dot{z} = \nabla F(z)F(z).$
	Similar to the analysis done in Section~\ref{sec:main}, Appendix~\ref{app:main}, and by analysing JM DTA directly, we can show that for proper choice of step size $s$, the set of saddle points of $f$ is a subset of 
    exponentially stable equilibria of Jacobian method's DTA and $O(1)$-resolution ODE if $\Im(\lambda)\not=0$  and $\Re(\lambda)=0$ or $\Im(\lambda)\not=0$ and $|\Re(\lambda)|<|\Im(\lambda)|$ for $\lambda\in\sigma(H(z^\ast)).$ This is a restrictive limitation. For example, if we consider the strongly convex strongly concave objective function $f(x,y)=x^2-y^2$ with $(0,0)$ as its saddle point, the sequence generated by JM diverges and fails to converge to the saddle point of interest.
\end{remark}
\subsection{Min-Max Problems with Rank Deficient Hessians}\label{sec:gen}
The results in Section \ref{sec:main_p1}
rely on 
Assumption~\ref{as:inv}(i), which allows us to conclude stability properties from eigenvalues of the Hessian $\nabla^2f(x^\ast,y^\ast)$ and from eigenvalues of the Jacobian $\frac{\partial w_s}{\partial z}(z^\ast)$. 
This assumption is a common assumption in the literature, for example \cite[As. 1.7]{daskalakis2018limit}, \cite[As. $2^\prime$]{chae2023two}, and \cite{jin2020local}, where the authors only consider local saddle points with $\nabla^2_{yy} f(z^\ast)\prec0$ and $\nabla^2_{xx} f(z^\ast)\succ0$. 
If the Jacobian $\frac{\partial w_s}{\partial z}(z^\ast)$ is semi-definite, or if one is interested in stability properties of sets, more complicated constructions based on Theorem \ref{thm:lyapunov_asymptotic_stab} instead of Theorem \ref{thm:stability_of_equilibria} are required.
In this section, we investigate functions where Assumption~\ref{as:inv}(i) is not satisfied, and consequently, we use Theorem \ref{thm:lyapunov_asymptotic_stab} to derive convergence properties of DTAs to compact sets and to saddle points.
 \subsubsection{Convergence to a compact attractor}
\label{sec:cas}
In this section, we focus on the objective function $f(x,y) = \chi(r) (x^2-y^2),$ where $r=\sqrt{x^2+y^2}$ and 
\begin{align}
    \begin{array}{rcl}		
		\chi(r) =		
	\end{array}
	\left\{
	\begin{array}{l}
		0\qquad\qquad r\leq R		
		\\
		\psi(\frac{r-R}{\epsilon}) \quad R\le r\le R+\epsilon
		\\
		1\qquad\qquad r\geq R+\epsilon 
		\\
	\end{array}
	\right.
\end{align}
with $R=1,$ $\epsilon=0.2$ and $\psi(p) = 3p^2-2p^3.$ 
One can verify
that $f$ is a $C^\infty$ function, 
the gradient and Hessian of $f(\cdot)$ are zero for all 
$z\in\mathcal{B}_R(0)$,  
all 
points 
$z\in\mathcal{B}_R(0)$ are saddle points of $f$, and $\nabla^2 f$ is not invertible at its saddle points. Thus, we can not use the results relying on the invertibility of the Hessian at the saddle points. Moreover, calculating the gradient of $f,$ we have 
\begin{align*}
    &\nabla_x f(x,y) = 2x\chi(r) + \chi^\prime(r)\frac{x}{r}(x^2-y^2)~\text{and}~\nabla_x f(x,y) = -2y\chi(r) + \chi^\prime(r)\frac{y}{r}(x^2-y^2),
\end{align*}
where 
\begin{align*}
    \begin{array}{rcl}		
		\chi^\prime(r) 		=
	\end{array}
	\left\{
	\begin{array}{l}
		0\qquad\qquad r\in [0,R] \cup [R+\epsilon, \infty) 
		\\
		\psi^\prime(\frac{r-R}{\epsilon}) \quad R\le r\le R+\epsilon
	\end{array}
	\right.
\end{align*}
with $\psi^\prime(p) = 6p-6p^2.$ Note that $\chi(r)$ and $\chi^\prime(r)$ are nonnegative for all $r.$

From Section~\ref{sec:main_p1}, we know that the $O(1)$-resolution of \ref{TTGDA} with $\tau=1,$ \ref{GEG} with $\tau=\gamma=1,$ and \ref{TTPPM} with $\tau=1$ is of the form $\dot{z} = -F(z),$ where $F(z)$ is defined in \eqref{eq:F_definition}. 
We
analyse this system in three different regions. For $r\leq R,$ we have $\dot{x}=\dot{y} = 0,$ which means that $\mathcal{B}_R(0)$ is a compact 
invariant set
for the 
dynamical system $\dot{z} = -F(z)$. For $r\geq R+\epsilon,$ we have $\dot{x} = -2x$ and $\dot{y}=-2y,$ or equivalently $\dot{r} = \frac{x\dot{x}+y\dot{y}}{r}=-2r,$ which means that $r$ 
decreases until it enters the region $R<r<R+\epsilon.$ For $R<r<R+\epsilon,$ we have
\begin{align*}
\left\{\begin{array}{l}
    \dot{x} = -2x\chi(r) - \chi^\prime(r)\frac{x}{r}(x^2-y^2),\\
    \dot{y} = -2y\chi(r) + \chi^\prime(r)\frac{y}{r}(x^2-y^2),
\end{array} \right.
\qquad \text{or} \qquad \dot{r} = -2\chi(r)r-\chi^\prime(r)\frac{(x^2-y^2)^2}{r^2}<0,
\end{align*}
which means that $r$ 
decreases and 
converges to  $\mathcal{B}_R(0).$ Hence, we can conclude that the set 
$\mathcal{B}_R(0)$ is 
asymptotically stable 
for the $O(1)$-resolution ODE of \ref{TTGDA} with $\tau=1,$ \ref{GEG} with $\tau=\gamma=1,$ and \ref{TTPPM} with $\tau=1$. Moreover, 
using Theorem~\ref{thm:stab1_A}, 
there exist sufficiently small 
step sizes such that the DTAs of these 
algorithms locally converge to $\mathcal{B}_R(0).$
 \subsubsection{Objective function $f(x,y)=x^2-y^4$}\label{sec:x2y4}
 In this section we focus on the $C^\infty$ objective function $f(x,y) = x^2-y^4$ with saddle point $z^*=(0,0)$, which does not satisfy Assumption \ref{as:inv}(i). While 
it can be 
observed numerically that if we implement some of the 
algorithms analysed in Section \ref{sec:main_p1}, considering $f$ as the objective function, the generated sequence 
converges to a saddle point of the objective function for 
small enough step sizes, 
this behaviour can not be justified using 
existing studies. 
Here, we 
analyse 
this generalised setting
using Theorems~\ref{thm:stab1_A} and \ref{thm:lyapunov_asymptotic_stab} 
to justify the numerical convergence behaviour.
It holds that
\begin{align*}
    \nabla f(z) = \begin{bmatrix}
        2x\\-4y^3
    \end{bmatrix},\quad
    \nabla^2f(z)=\begin{bmatrix}
    \begin{array}{cc}
		2 & 0  \\
		0 & -12y^2 
	\end{array}
    \end{bmatrix},
\end{align*}
which means the Hessian of $f$ is not invertible at $z^\ast$ and we can not use the results derived 
so far. 
From Section \ref{sec:main_p1} we know that the $O(1)$-resolution of \ref{TTGDA} with $\tau=1,$ \ref{GEG} with $\tau=\gamma=1,$ and \ref{TTPPM} with $\tau=1$ is $\dot{z} = -F(z) =\big[ \begin{smallmatrix}
        -2x\\-4y^3
    \end{smallmatrix} \big]$ 
and
$z^\ast$ is an equilibrium of this system. To analyse this dynamical system, consider the Lyapunov candidate function 
$V(x) = x^2+y^2$,
which is zero at $z^\ast$ and radially unbounded. 
Moreover, $\dot{V}(z) = 2x\dot{x}+2y\dot{y}=-4x^2-8y^4<0$ for all 
$z\neq z^\ast$ from which asymptotic stability of $z^\ast$ can be concluded from Theorem \ref{thm:lyapunov_asymptotic_stab}. 
Using 
Theorem \ref{thm:stab1_A},
we can conclude that there exist small enough step sizes such that $z^\ast$ is an asymptotically stable equilibrium for TT-GDA, TT-PPM, and GEG with the selected parameters.

Now, 
consider the $O(1)$-resolution of \ref{dyn:DN}, which is $\dot{z} = -(\nabla F(z))^{-1}F(z).$ It can be seen that $(\nabla F(z))^{-1}$ is defined for all $z\neq z^\ast$. 
Moreover, by direct calculation of $(\nabla F(z))^{-1}F(z),$ we see that
\begin{align}\label{eq:o1dn}
        \dot{z}
        \!=\!\frac{1}{24y^2}
        \begin{bmatrix}
    \begin{array}{cc}
		-12y^2 & 0  \\
		0 & 2 
	\end{array}
    \end{bmatrix}
        \begin{bmatrix}
        2x\\-4y^3
    \end{bmatrix}\!=\!\frac{1}{24y^2}\begin{bmatrix}
        -24y^2x\\-8y^3
    \end{bmatrix}
    \!=\! \begin{bmatrix}
        -x\\-\frac{1}{3}y
    \end{bmatrix}, 
    \end{align}
    i.e., the $O(1)$-resolution of \ref{dyn:DN} is also well defined at the origin.
We observe that 
$z^\ast$ is the equilibrium of \eqref{eq:o1dn} and asymptotic stability of $z^\ast$ follows from the Lyapunov function 
$V(z) = x^2+y^2,$ for example.
Using again Theorem \ref{thm:stab1_A},
we can conclude that there exists a small enough step size such that $z^\ast$ is an asymptotically stable equilibrium for DN. The same analysis can be done to explain the convergence of TT-GDA, GEG, TT-PPM, and DN to the saddle point of 
$f(x,y) = x^4-y^4$.

\section{Conclusion and Future Work}\label{sec:conc}
 We have shown 
that for a discrete-time and a continuous-time dynamical system satisfying Assumption~\ref{as:error}, exponential stability of a common equilibrium 
with respect to the continuous time dynamics implies the exponential stability of the equilibrium 
for the discrete-time system under proper hyperparameter selection. 
Additionally, we have demonstrated how the results can be extended to stability properties of compact invariant sets.
We have 
built a connection between the stability properties of discrete time algorithms and their $O(s^r)$-resolution ODEs using Theorem~\ref{thm:stab} and have established 
that for common equilibria of DTAs and their corresponding  $O(s^r)$-resolution ODEs, if the equilibrium is exponentially stable for the ODE, then 
it is exponentially stable for the DTA  with small enough step size. 
The theoretical results have been applied to min-max algorithms in the literature.
and we have shown
that under mild assumptions and proper choice of parameters, the set of saddle points of the objective function 
in \eqref{eq:eq20} is a subset of the set of exponentially stable equilibria of 
these algorithms.
Numerical simulations confirming the theoretical results have been included included online through a \href{https://mybinder.org/v2/gh/amirali78frz/sicon-minmax-stability-numerics.git/main?urlpath=%2Fdoc%2Ftree%2FSIAM_binder.ipynb}{\texttt{Binder notebook}}.
For future work, an interesting direction is to use the $O(s)$ resolution ODE to go back from an ODE to its corresponding DTA. In this 
way, one 
can design a flow with intended 
properties and obtain its corresponding DTA.
 \appendix
\section{Additional Theorems and Lemmas}\label{app:extra}
In this section, 
results from the literature
used in the proofs of the main theorems of this study summarised.
 \begin{theorem}[\mbox{\cite[Thm. 3.3.52]{hinrichsen2005mathematical}}] \label{thm:stability_of_equilibria}
Consider the discrete-time system \eqref{eq:discrete_time_sys} and the continuous-time system \eqref{eq:continuous_sys}, respectively. Assume that $w_s(\cdot)$ and $W_s(\cdot)$ are continuously differentiable in $z^e\in \R^d$, where $z^e$ denotes an equilibrium.
\begin{itemize}
    \item[(i)] If $|\lambda| < 1$ for all $\lambda \in \sigma(\frac{\partial w_s}{\partial z}(z^e))$, ($\Re(\lambda) < 0$ for all $\lambda \in \sigma(\frac{\partial W_s}{\partial z}(z^e))$) then the equilibrium $z^e$ is exponentially stable.      
    \item[(ii)] If there exists $\lambda \in \sigma(\frac{\partial w_s}{\partial z}(z^e))$ such that $|\lambda| > 1$, (if there exists $\lambda \! \in \! \sigma(\frac{\partial W_s}{\partial z}(z^e))$ such that $\Re(\lambda) > 0$) then the equilibrium $z^e$ is unstable.
\end{itemize}
\end{theorem}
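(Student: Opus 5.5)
The statement is the classical linearisation principle (Lyapunov's indirect method) for both time scales, and I would prove it by constructing a quadratic Lyapunov function from the linearisation. Shift coordinates so that $z^e=0$ and write $A=\frac{\partial w_s}{\partial z}(z^e)$ in the discrete case and $A=\frac{\partial W_s}{\partial z}(z^e)$ in the continuous case. The remainder $g(z)=w_s(z)-z^e-A(z-z^e)$, respectively $g(z)=W_s(z)-A(z-z^e)$, satisfies $|g(z)|=o(|z|)$ as $z\to 0$. This uses only the $C^1$ hypothesis at $z^e$.

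\textbf{Part (i).} Assume $\rho(A)<1$ (respectively $\Re(\lambda)<0$ for all $\lambda\in\sigma(A)$). Solve the discrete Lyapunov equation $A^\top PA-P=-I$ (respectively $A^\top P+PA=-I$) for $P\succ 0$. In the discrete case take $P=\sum_{k\ge0}(A^\top)^kA^k$, which converges because $\rho(A)<1$. In the continuous case take $P=\int_0^\infty e^{A^\top t}e^{At}\,dt$. Set $V(z)=z^\top Pz$.

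In the discrete case,
\[
V(w_s(z))-V(z)=-|z|^2+2z^\top A^\top Pg(z)+g(z)^\top Pg(z)\le -\tfrac12|z|^2
\]
on a small ball where $|g(z)|\le\eta|z|$ with $\eta$ small. Using $\lambda_{\min}(P)|z|^2\le V(z)\le\lambda_{\max}(P)|z|^2$, this gives $V(z_{k+1})\le(1-c)V(z_k)$ for some $c\in(0,1)$. Sublevel sets of $V$ inside the ball are forward invariant, so iterating yields $|z_k|\le M\gamma^k|z_0|$ with $\gamma=\sqrt{1-c}$. In the continuous case the same estimate gives $\dot V\le -\tfrac12|z|^2\le -cV$, and Grönwall's inequality yields $|Z(t)|\le Me^{-\lambda t}|Z_0|$. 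Stability in the sense of the definitions follows from the same bound, with $\delta$ chosen so that the relevant sublevel set lies inside the ball.

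\textbf{Part (ii).} This is the harder direction. I would use the Chetaev-type construction of the spectral decomposition. Split $\R^d=E_u\oplus E_s$ into the invariant subspaces of $A$ for eigenvalues with $|\lambda|>1$ (respectively $\Re\lambda>0$) and the remaining ones. On $E_u$ the restricted matrix $A_u$ is expanding, so there is $P_u\succ0$ with $A_u^\top P_uA_u-P_u\succeq \mu P_u$. On the complement, eigenvalues may lie on the unit circle; shift by a small margin, i.e.\ pick $1<\beta<\min|\lambda_u|$, so that $A_s/\beta$ is Schur stable, and choose $P_s\succ 0$ with $A_s^\top P_sA_s\preceq \beta'^2P_s$ where $\beta'<\beta$.

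Let $V(z)=z_u^\top P_uz_u-z_s^\top P_sz_s$. The linear part makes $V(Az)-V(z)\ge c|z|^2$ on the cone $\{V>0\}$ up to the chosen margins, and the $o(|z|)$ remainder is absorbed on a small ball. Chetaev's argument then applies: a trajectory starting arbitrarily close to $0$ with $V(z_0)>0$ has $V(z_k)$ growing geometrically while it stays in the ball, so it must leave the ball. This contradicts stability. The continuous case is analogous, with $\dot V$ in place of the difference.

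The main obstacle is the careful choice of $P_u$ and $P_s$ so that the cross terms and the marginal eigenvalues do not spoil the positivity of $V(Az)-V(z)$ on $\{V>0\}$. Scaling the norms via Jordan forms with small off-diagonal entries handles this.
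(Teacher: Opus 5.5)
Your argument is correct. The paper gives no proof of its own here: the statement is imported from \cite{hinrichsen2005mathematical} and used only as a black box to read stability off Jacobian spectra in Section~\ref{sec:main_p1}. What you supply is the standard self-contained proof of the indirect method. For (i) you use a quadratic Lyapunov function from the Lyapunov equation. For (ii) you use a Chetaev-type indefinite form with a spectral margin, as in the continuous-time argument in \cite{khalil2002nonlinear}.

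Compared with the bare citation, your route buys two things. It makes explicit that, in discrete time, only differentiability at $z^e$ is used. It also shows that part (i) is an instance of the paper's Theorem~\ref{thm:lyapunov_asymptotic_stab} with $V(z)=z^\top Pz$ and $p=2$ on a small ball. This links the eigenvalue test of Section~\ref{sec:main_p1} to the Lyapunov test of Section~\ref{sec:gen}.

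In (ii), make the margins explicit instead of leaving them to ``the chosen margins''. An arbitrary $\mu>0$ in $A_u^\top P_uA_u\succeq(1+\mu)P_u$ is not enough; you need $1+\mu>\beta'^2$. A clean choice is to take $\rho(A_s)\le 1<\beta'<\beta<\min|\lambda_u|$ and set
\[
P_u=\sum_{k\ge0}(\beta A_u^{-1})^{\top k}(\beta A_u^{-1})^k,\qquad P_s=\sum_{k\ge0}(A_s/\beta')^{\top k}(A_s/\beta')^k .
\]
Both series converge, and they give $A_u^\top P_uA_u\succeq\beta^2P_u$ and $A_s^\top P_sA_s\preceq\beta'^2P_s$.

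With $a=z_u^\top P_uz_u$ and $b=z_s^\top P_sz_s$, this yields $V(Az)\ge\beta'^2V(z)+(\beta^2-\beta'^2)a$. On $\{V>0\}$ one has $a>\tfrac12(a+b)\ge c|z|^2$. Hence the $o(|z|)$ remainder can be absorbed, and $V(z_{k+1})\ge\beta'^2V(z_k)$ as long as $z_k$ stays in the ball.

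Finally, the ``main obstacle'' you name is not really one. In coordinates adapted to $E_u\oplus E_s$ the linear part is block diagonal, so there are no linear cross terms. The only cross terms come from the remainder, and the series above replace any Jordan-form rescaling.
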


\begin{theorem}[\mbox{\cite[Thm. 2.17 and 5.6]{kellett2023introduction}}]\label{thm:lyapunov_asymptotic_stab}
    Consider the discrete-time system \eqref{eq:discrete_time_sys} and the continuous-time system \eqref{eq:continuous_sys}, respectively. Assume that $z^e$ is an equilibrium of $w_s(\cdot)$ and $W_s(\cdot)$, and without loss of generality, assume $z^e=0$. Suppose there exists a continuously differentiable function $V : \mathcal{D} \to \mathbb{R}_{\ge 0},$ $\mathcal{D}\subset\R^d$ and constants $\lambda_1, \lambda_2, c > 0$ and $p \ge 1$ such that, for all $z \in \mathcal{D}$,
\(
\lambda_1 |z|^p \le V(z) \le \lambda_2 |z|^p
\)
and
\begin{equation}\label{eq:lyapunov_func2}
\langle \nabla V(z), W_s(z) \rangle \le -c V(z) \qquad (V(w_s(z))-V(z)\leq-cV(z))
\end{equation}
Then, $z^e$ is exponentially stable w.r.t.  $W_s(\cdot)$ ($w_s(\cdot)$).
\end{theorem}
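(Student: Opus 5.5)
The plan is to prove both halves of Theorem~\ref{thm:lyapunov_asymptotic_stab} the same way. First we get decay of $V$ along solutions. Then we use the sandwich bound $\lambda_1|z|^p\le V(z)\le\lambda_2|z|^p$ to turn decay of $V$ into decay of $|z|$. We read $\mathcal{D}$ as a set containing a neighbourhood of the equilibrium $z^e=0$, since otherwise the statement has no local content. The only genuinely delicate point is making sure the trajectories never leave $\mathcal{D}$, where the hypotheses hold. So the first step is a sublevel-set invariance argument.

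\textbf{Step 1 (an invariant region).} Fix $\rho>0$ with $\mathcal{B}_\rho(0)\subset\mathcal{D}$ and set $\Omega=\{z\in\mathcal{B}_\rho(0):V(z)\le\lambda_1\rho^p\}$. Any $z\in\mathcal{B}_\rho(0)$ with $V(z)\le\lambda_1\rho^p$ satisfies $|z|^p\le V(z)/\lambda_1\le\rho^p$. Choose $\delta=(\lambda_1/\lambda_2)^{1/p}\rho$. Then $|z_0|<\delta$ gives $V(z_0)\le\lambda_2|z_0|^p<\lambda_1\rho^p$, so $z_0\in\Omega$.

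\textbf{Step 2 (discrete time).} We argue by induction.
\begin{itemize}
\item If $z_k\in\Omega\subset\mathcal{D}$, the hypothesis gives $V(z_{k+1})\le(1-c)V(z_k)\le V(z_k)\le\lambda_1\rho^p$.
\item By itself this does not show $z_{k+1}\in\mathcal{B}_\rho(0)$, because the sandwich bound is only known on $\mathcal{D}$. The cleanest fix is to take $\mathcal{D}=\mathcal{B}_{\rho'}(0)$ and shrink $\rho$ so that $w_s(\mathcal{B}_\rho(0))\subset\mathcal{B}_{\rho'}(0)$, which continuity of $w_s$ at $0=w_s(0)$ allows. Then $V(z_{k+1})\le\lambda_1\rho^p$ together with the lower bound on $\mathcal{B}_{\rho'}(0)$ forces $|z_{k+1}|\le\rho$.
\item Hence $z_{k+1}\in\Omega$ and the induction closes.
\end{itemize}
Since $V\ge0$, the inequality $V(z_{k+1})\le(1-c)V(z_k)$ forces $c\le1$ whenever $V(z_k)>0$. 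Iterating gives $V(z_k)\le(1-c)^kV(z_0)$, and therefore
\begin{align*}
|z_k|\le\Big(\tfrac{\lambda_2}{\lambda_1}\Big)^{1/p}\big((1-c)^{1/p}\big)^k|z_0| .
\end{align*}
This is the required estimate with $M=(\lambda_2/\lambda_1)^{1/p}$ and $\gamma=(1-c)^{1/p}$. In the degenerate case $c=1$ the trajectory reaches $0$ after one step, so any $\gamma\in(0,1)$ works. Stability follows from the same bound: given $\varepsilon$, shrink $\delta$ so that $M\delta<\varepsilon$.

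\textbf{Step 3 (continuous time).} Let $T$ be the maximal time for which $Z(t;Z_0)$ exists and remains in $\mathcal{B}_\rho(0)$. On $[0,T)$ the chain rule gives $\frac{d}{dt}V(Z(t))=\langle\nabla V,W_s\rangle\le-cV(Z(t))$. Gr\"onwall's inequality then yields $V(Z(t))\le e^{-ct}V(Z_0)<\lambda_1\rho^p$, and hence $|Z(t)|<\rho$ with a uniform margin. Suppose $T<\infty$. By continuity the trajectory would still lie strictly inside $\mathcal{B}_\rho(0)$ at time $T$. Since $W_s$ is $C^1$ and the trajectory is bounded, the solution also extends past $T$. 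Either way the maximality of $T$ is contradicted, so $T=\infty$. Combining with the sandwich bound gives
\begin{align*}
|Z(t)|\le\Big(\tfrac{\lambda_2}{\lambda_1}\Big)^{1/p}e^{-(c/p)t}|Z_0|,
\end{align*}
which is exponential stability with rate $\lambda=c/p$; stability follows as in Step~2.

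\textbf{Main obstacle.} I expect the main difficulty to be the invariance of the region where the hypotheses are valid: the one-step escape issue in discrete time and the first-exit-time argument in continuous time. The decay estimates themselves are routine once invariance is in place.
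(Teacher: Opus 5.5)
Your proof is correct, and it follows the standard route: the paper gives no argument of its own here and simply cites \cite[Thm.~2.17 and 5.6]{kellett2023introduction}, whose proofs combine sublevel-set invariance, geometric (resp.\ Gr\"onwall) decay of $V$, and the sandwich bound to pass from $V$ to $|z|$, exactly as you do. One simplification: you do not need continuity of $w_s$, which is not among the stated hypotheses, because $V$ is only defined on $\mathcal{D}$, so the discrete decrease condition already presupposes $w_s(\mathcal{D})\subset\mathcal{D}$ and the lower bound on $\mathcal{D}$ then gives $|z_{k+1}|\le\rho$ directly.
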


While Theorem \ref{thm:lyapunov_asymptotic_stab} characterises (local) exponential stability, similar characterisations for stability and asymptotic stability exist \cite[Ch. 2 and Ch. 5]{kellett2023introduction}.
 \begin{proposition}[\mbox{\cite[Prop. 2]{lu2022sr}}]
Consider a DTA with iterate update 
$z^+=w_s(z)$ and its $O(s^r)$-resolution ODE $\dot{Z} = f^{(r)}(Z,s)$ in \eqref{eq:ode}. Let $z^\ast$ be a fixed point of the DTA, then $z^\ast$ is 
a fixed point of the $O(s^r)$-resolution ODE for any degree $r.$
\end{proposition}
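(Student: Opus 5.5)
The plan is to read off the coefficient functions $f_i$ from the recursion in Theorem~\ref{th:ODE} and show by induction on $i$ that $f_i(z^\ast)=0$ for every $i\in\{0,\ldots,r\}$. Then $f^{(r)}(z^\ast,s)=\sum_{i=0}^r s^i f_i(z^\ast)=0$ for every $s$, which is exactly the claim.

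I interpret ``$z^\ast$ is a fixed point of the DTA'' as $w_s(z^\ast)=z^\ast$ for all step sizes $s$ in a neighbourhood of $0$. This is the natural reading for the algorithms considered, since $F(z^\ast)=0$ makes $z^\ast$ fixed for every $s$, and it is consistent with $w_0(z)=z$. Under this reading the map $s\mapsto w_s(z^\ast)$ is constant, so
\begin{align*}
\frac{\partial^{k} w_s(z^\ast)}{\partial s^{k}}\Big|_{s=0}=0 \qquad \text{for all } k\geq 1.
\end{align*}
Consequently, the formula of Theorem~\ref{th:ODE} evaluated at $z^\ast$ reduces to
\begin{align*}
f_i(z^\ast)=-\sum_{l=2}^{i+1}\frac{1}{l!}\,h_{l,i+1-l}(z^\ast).
\end{align*}
In particular the sum is empty for $i=0$, so $f_0(z^\ast)=0$. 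This is the base case.

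The key auxiliary claim is the following: if $f_0(z^\ast)=\cdots=f_q(z^\ast)=0$, then $h_{j,p}(z^\ast)=0$ for all $j\geq 1$ and all $p\leq q$. I would prove it by induction on $j$.
\begin{itemize}
\item For $j=1$ it holds directly, since $h_{1,p}=f_p$.
\item For the step $j\to j+1$, use $h_{j+1,p}(z^\ast)=\sum_{m=0}^{p}\nabla h_{j,m}(z^\ast)\,f_{p-m}(z^\ast)$. Every index satisfies $p-m\leq q$, so every factor $f_{p-m}(z^\ast)$ vanishes.
\end{itemize}
The Jacobians $\nabla h_{j,m}(z^\ast)$ need not vanish; this includes $\nabla h_{0,0}=I$ if the recursion is started at $j=0$. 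This does not matter, because each term is multiplied by a vanishing $f$.

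Now suppose $f_0,\ldots,f_{i-1}$ vanish at $z^\ast$. In the sum for $f_i(z^\ast)$ every term has $l\geq 2$, so its second index satisfies $i+1-l\leq i-1$. The auxiliary claim with $q=i-1$ then gives $h_{l,i+1-l}(z^\ast)=0$ for every term, hence $f_i(z^\ast)=0$. This completes the induction.

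The main obstacle is bookkeeping rather than anything conceptual. One must check that the recursion for $f_i$ involves only $f_0,\ldots,f_{i-1}$, and not $f_i$ itself, on the right-hand side; the index bound $i+1-l\leq i-1$ for $l\geq2$ is what guarantees this. One must also confirm that the convention $h_{0,i}=0$ for $i\geq1$ together with $h_{0,0}=z$ causes no exceptional terms. A secondary point is to state explicitly the standing assumption that $z^\ast$ is fixed for all small $s$. If it were fixed only for one particular $s$, the $s$-derivatives of $w_s(z^\ast)$ need not vanish, and the statement could fail.
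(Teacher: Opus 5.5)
Your proof is correct and is essentially the argument behind the cited result (the paper itself gives no proof and only cites \cite[Prop.~2]{lu2022sr}): since $w_s(z^\ast)=z^\ast$ for all small $s$, every $s$-derivative of $w_s(z^\ast)$ at $s=0$ vanishes, and induction on $i$ through the recursion of Theorem~\ref{th:ODE} gives $f_i(z^\ast)=0$, because each $h_{l,i+1-l}(z^\ast)$ with $l\geq 2$ carries a factor $f_k(z^\ast)$ with $k\leq i-1$. You also correctly point out that $z^\ast$ must be fixed for all sufficiently small $s$, which is the implicit hypothesis of the statement; it holds for the algorithms in the paper because $F(z^\ast)=0$ there.
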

 The above proposition indicates that any equilibrium of the DTA is also an equilibrium of the $O(s^r)$-resolution ODE, but it does not mean they are the same. 
The resolution ODE may introduce new equilibria 
in the construction process. This can be seen in \eqref{os:TTGDA}, where the $O(s)$-resolution ODE of the TT-GDA may admit equilibria other than the fixed points of \eqref{dyn:TTGDA}, for instance.
 \section{Proof of Main Results}\label{app:main}
In this section, we give proofs of the main results presented in Sections~\ref{sec:stab} and~\ref{sec:main}.
 \begin{proof}[Proof of Theorem~\ref{thm:stab}]
    Let $z^e \in \R^d$ be
    a locally exponentially stable equilibrium of \eqref{eq:continuous_sys}. 
    Since $W_s(\cdot)$ is a C$^1$ function by assumption, $W_s(\cdot)$ is locally Lipschitz in a neighbourhood of $z^e$ 
    with constant $L_0$. 
    Without loss of generality, assume $z^e=0$. 
    Consider the converse Lyapunov result \cite[Thm 4.14]{khalil2002nonlinear}. According to this result, 
    there exist a constant $\delta_0>0$, 
     a $C^1$ function   
    $V: D_0\rightarrow \R$ ($D_0=\{z\in\R^d| \ |z|\leq \delta_0\}$)
    , and positive constants $c_1$, $c_2$, $c_3$, and $c_4$ such that 
    the inequalities
	\begin{align}
		c_1 |z |^2\leq V(z) &\leq c_2 |z |^2, \label{eq:conL1}\\
		\langle \nabla V(z),W_s(z)\rangle&\leq-c_3 |z |^2, \label{eq:conL2}\\
		|\nabla V(z) |&\leq c_4 |z |, \label{eq:conL3}
	\end{align}
    are satisfied for all $z\in D_0$.
   Using \cite[Lem. 15]{teel2000smooth} we can additionally assume that $V$ is a $C^\infty$ function without loss of generality, and 
   from Assumption~\ref{as:error}, we know  that
    \begin{align*}        
    |Z(s)-z^+ |\leq Cs^{r},
    \end{align*}
    where $C>0,$ $s\in(0,1)$, and $Z(s)$ is the solution obtained at $t=s$ with initial condition $Z(0)=z$ and $z^+=w_s(z).$	
    Let $\tilde{D}=\{z\in\R^d| \ |z|\leq \tilde{\delta}\}$ with $\tilde{\delta} \in (0, \delta_0]$ (i.e.,  $\tilde{D}\subset D_0$) 
    such that 
    $z\in\tilde{D}$  implies 
    $z^+\in D_0.$ 
    The existence of $\bar{\delta}>0$ with this property follows from Assumption \ref{as:error} and the assumption that $w_s(\cdot)$ is a C$^1$ function.    
	%
    Using Taylor's theorem (\cite[Thm 2.1]{wright1999numerical}), we have 
	\[V(z^+)=V(z) + \langle \nabla V(z),z^+-z \rangle + \tfrac{1}{2} (z^+-z)^T\nabla^2V(\bar{z})(z^+-z),\]
	where $\bar{z} = z + \xi(z^+-z)$ for some $\xi\in(0,1).$ Considering \eqref{eq:conL3}, we have 
	\begin{align}\label{eq:1}
		V(z^+)\leq V(z) + \langle \nabla V(z),w_s(z)-z \rangle + \tfrac{1}{2}c_4 |w_s(z)-z |^2.
	\end{align}
	Using Taylor's theorem to approximate the solution of $\dot{Z}(\cdot)$ at time $s$, results in
	\begin{align*}
		Z(s) &= Z(0) +s\dot{Z}(0)+R_1 = z + sW_s(z)+R_1,
	\end{align*}
	where $ |R_1 |\leq O(s^2).$ Hence, 
    $|w_s(z) - (z+sW_s(z)+R_1) |\leq O(s^{r}),$ 
	which implies
	\begin{align}\label{eq:2}
		w_s(z) &= z + sW_s(z)+R_1+R_2 =z + sW_s(z)+R_3,
	\end{align}
	where  $|R_2 |\leq O(s^r)$ and $|R_3|\leq O(s^2).$ The second equality is due to the fact that $r\geq 2$ and $s\in(0,1)$. 
    Now, substituting \eqref{eq:2} in \eqref{eq:1}, we have
	\begin{align}\label{eq:3}
		V(z^+)-V(z) &\leq s\langle \nabla V(z),W_s(z) \rangle + \langle \nabla V(z),R_3 \rangle + c_4(s^2 |W_s(z) |^2+R_3^2)\notag\\
		&\leq -c_3s  |z|^2+ c_4L_0s^2 |z|^2 +O(s^2)+O(s^4) \notag\\
		&\leq -\frac{c_3s}{2} |z|^2 + \left(-\frac{c_3s}{2} |z |^2 + c_4L_0s^2 |z |^2 +O(s^2)+O(s^4) \right),
	\end{align}
    where the second inequality is due to \eqref{eq:conL2} and local Lipschitzness of $W_s(z).$ 
    There exists $\tilde{s}$ such that for all $s\in(0,\tilde{s}]$ the second term of \eqref{eq:3} is negative and thus 
	\begin{align*}
		V(z^+)-V(z)&\leq -\frac{c_3s}{2} |z|^2\leq -\frac{c_3s}{2c_1}V(z),
	\end{align*}
	where the second inequality is due to \eqref{eq:conL1}.
	Hence, 
    \begin{align*}
         V(z^+)\leq \left(1-\frac{c_3s}{2c_1}\right)V(z), 
    \end{align*}
     implies 
    local exponentially stability of $z^e=0$ 
    for $w_s(z)$ where $s\in(0,s^*)$ and $s^*=\min\{\tilde{s},\frac{2c_1}{c_3},1\}$ via \eqref{eq:conL1} and  
    \cite[Thm. 5.6]{kellett2023introduction}.
\end{proof}
 \begin{proof}[Proof of Theorem~\ref{thm:stab1_A}]
We follow 
similar steps 
as 
the proof of Theorem~\ref{thm:stab}. Suppose that the compact set $\mathcal{A}$ is locally asymptotically stable w.r.t. $W_s(\cdot)$. 
Since 
$W_s(\cdot)$ is a $C^1$ function, it is locally Lipschitz in a neighbourhood of $\mathcal{A}$ with constant $L_0$. 
We consider the converse Lyapunov result \cite[Thm. 2.9]{lin1996smooth}. 
Accordingly, there exist a smooth $V: \R^d\rightarrow \R_{\geq0}$,
$\mathcal{K}_\infty$ functions $\alpha_1$, $\alpha_2$ and $\alpha_3$, and $\ell>0$ such that 
	\begin{align}
		\alpha_1(|z|_\mathcal{A})\leq V(z) &\leq \alpha_2(|z|_\mathcal{A}), \label{eq:conL111}\\
		\langle \nabla V(z),W_s(z)\rangle &\leq-\alpha_3(|z|_\mathcal{A}), \label{eq:conL222}
	\end{align}
    are satisfied
	for all 
    $z\in D_0 = \{z\in \R^d | \ V(z) \leq \ell \}$. 
    Moreover, as $V$ is smooth (i.e., infinitely continuously differentiable), 
    $|\nabla V(\cdot)|$ and $\|\nabla^2 V(\cdot)\|$ are bounded in $D_0$. 
    From Assumption~\ref{as:error}, we know 
    $|Z(s)-z^+ |\leq Cs^{r}$, %
    where $C>0,$ $s\in(0,1),$  and $Z(s)$ is the solution of \eqref{eq:continuous_sys} obtained at $t=s$ with initial condition $Z(0)=z$ and $z^+=w_s(z).$
We define $\tilde{\delta}>0$ such that $\tilde{D}=\{z\in\R^d| \ |z|_{\mathcal{A}}\leq \tilde{\delta} \} \subset D_0$ and
such that  $z\in\tilde{D}$ implies $z^+\in D_0.$
The existence of $\tilde{\delta}>0$ follows again from Assumption \ref{as:error} and the assumption that $w_s(\cdot)$ is a C$^1$ function.
Following the same steps as in the proof of Theorem \ref{thm:stab}, the inequality
	\begin{align}\label{eq:11}
		V(z^+)\leq V(z) + \langle \nabla V(z),w_s(z)-z \rangle + \tfrac{1}{2}c |w_s(z)-z |^2.
	\end{align}
    with $c\geq \max_{z\in D_0} \|\nabla^2V(z)\|$
    and the equation
	\begin{align}\label{eq:22}
		w_s(z) 
        =z + sW_s(z)+R_3,
	\end{align}
	with 
    $|R_3 |\leq O(s^2)$, are obtained. 
    Now, substituting \eqref{eq:22} in \eqref{eq:11}, we have
	\begin{align}
		V(z^+)-V(z) &\leq s\langle \nabla V(z),W_s(z) \rangle + \langle \nabla V(z),R_3 \rangle + c(s^2 |W_s(z) |^2+R_3^2)  \notag \\
		&\leq -s\alpha_3(|z|_\mathcal{A})+ cL_0s^2|z|_\mathcal{A}^2 +O(s^2)+O(s^4) \notag \\
		&\leq -\tfrac{s}{2}\alpha_3(|z|_\mathcal{A}) + \left(-\tfrac{s}{2}\alpha_3(|z|_\mathcal{A}) + cL_0s^2|z|_\mathcal{A}^2 +O(s^2)\right), \label{eq:33}
    \end{align}
	where the second inequality is due to \eqref{eq:conL222} and $L_0$ again denotes the Lipschitz constant of 
    $W_s(\cdot).$ 
    We observe
    that there exists $\tilde{s}$ such that for all $s\in(0,\tilde{s}]$ the second term of \eqref{eq:33} is negative and thus we have 
	\begin{align}
		V(z^+)-V(z)&\leq -\tfrac{s}{2} \alpha_3(|z|_\mathcal{A}). \label{eq:forward_invariance}
	\end{align}
Define $\tilde{\ell} >0$ such that $\{z \in \R^d| V(z) \leq \tilde{\ell} \} \subset \tilde{D}$.
    Then \eqref{eq:forward_invariance} shows stability of the set $\mathcal{A}$ and forward invariance of the set $\{z\in \R^d| \ V(z)\leq V(z_0)\}$ w.r.t. \eqref{eq:discrete_time_sys} for all $z_0 \in \{z \in \R^d| V(z) \leq \tilde{\ell} \}$.
Iterative application of \eqref{eq:forward_invariance} for $k\in \N$ yields
    \begin{align}
	  0\leq  V(z_{k+1})&\leq V(z_k) -\frac{s}{2}\alpha_3(|z_k|_\mathcal{A}) \leq  V(z_0) - \frac{s}{2} \sum_{i=0}^k \alpha_3(|z_i|_\mathcal{A}), \label{eq:lyap_sum} 
	\end{align}
    where $z_0 \in \{z \in \R^d| V(z) \leq \tilde{\ell} \}$.
    Thus, for $k\rightarrow \infty$ we have $\alpha_3(|z_k|_{\mathcal{A}}) \rightarrow 0$ (since otherwise there exists $k\in \N$ such that the right-hand side of \eqref{eq:lyap_sum} is negative). Since $\alpha_3\in \mathcal{K}_\infty$, the condition $\alpha_3(|z_k|_{\mathcal{A}}) \rightarrow 0$ is equivalent to $|z_k|_{\mathcal{A}} \rightarrow 0$, which shows local asymptotic stability of the set $\mathcal{A}$ w.r.t. \eqref{eq:discrete_time_sys} for $s\in(0,s^*)$ where $s^*=\min\{\tilde{s},1\}$.
\end{proof}
 \begin{proof}[Proof of Proposition~\ref{prop:TTGDA_uns}]
The proof follows similarly to that of \cite[Thm. 2]{lee2019first}.
First, we need to show 
that the right-hand side of \eqref{dyn:TTGDA} is a local diffeomorphism\footnote{A local diffeomorphism is a function that is
	locally invertible, smooth, and has a smooth local inverse~\cite{lee2012smooth}.}.
Considering the inverse function theorem \cite[Thm. 9.24]{rudin1964principles}, it is sufficient to show that the Jacobian of \eqref{dyn:TTGDA}
is invertible.
To this end, we 
show that none of the eigenvalues of the Jacobian of \eqref{dyn:TTGDA} 
is zero. Taking derivative of \eqref{dyn:TTGDA} w.r.t. $z$ we have 
$J(z) =  \mathrm{I} + s\Lambda_\tau H(z)$.
    As the largest eigenvalue (in absolute value) is bounded above by the norm, it is sufficient to show that \(s\|\Lambda_\tau H(z)\|< 1.\) 
    From Assumption~\ref{as:inv}(ii) we know that \(\|H(z)\|\leq\|\nabla^2f(z)\|\leq L\). Moreover, $\|\Lambda_\tau\|\leq\max\{\tau^{-1},1\}$ and thus, 
    for $s<\frac{\min\{1,\tau\}}{L},$ the right-hand side of  \eqref{dyn:TTGDA} is a local diffeomorphism. 
 	Then, the remainder of the proof follows along the lines of the arguments in \cite[Thm 2]{lee2019first}.
\end{proof}
 \begin{proof}[Proof of Theorem~\ref{thm:TTGDA_con}]
	To prove Theorem~\ref{thm:TTGDA_con}(i), consider \eqref{o1:TTGDA}. The Jacobian of \eqref{o1:TTGDA} has the form $J(z) = \Lambda_\tau H(z).$ Let $\kappa=a+bj\in\sigma(\lambda_\tau H(z^\ast)).$ Considering Theorem~\ref{thm:stability_of_equilibria},
    we need 
    to analyse if $\Re(\kappa)<0$ holds. 
    From Lemma~\ref{negeig} we know $a\leq0$ and from Assumption~\ref{as:inv}(i)
    we know $(a,b)\not=(0,0).$  But, we cannot exclude the case where $a=0$ and $b\not=0$, i.e., $\kappa$ might be
    purely imaginary. Thus, the set of saddle points of $f$ is a subset of the asymptotically stable equilibria of \eqref{o1:TTGDA} if $\Re(\kappa)\not=0$ for $\kappa\in\sigma(\lambda_\tau H(z^\ast)).$ 
	
	To prove Theorem~\ref{thm:TTGDA_con}(ii), consider \eqref{os:TTGDA}. The Jacobian of \eqref{o1:TTGDA} has the form 
    $$J(z) = (\mathrm{I}-\tfrac{s}{2}\Lambda_\tau H(z))\Lambda_\tau H(z)-\tfrac{s}{2}\Lambda_\tau\nabla^2F(z)\Lambda_\tau F(z).$$
	Evaluating the Jacobian at the saddle point $z^\ast$ and using the fact that 
    $F(z^\ast)=0,$
	\[J(z^\ast) = (\mathrm{I}-\tfrac{s}{2}\Lambda_\tau H(z^\ast))\Lambda_\tau H(z^\ast).\]
	holds. Let $\kappa=a+bj\in\sigma(\lambda_\tau H(z^\ast)).$ Then the eigenvalues of $J(z^\ast)$ have the form $\lambda = (1-\frac{s}{2}\kappa)\kappa.$ Substituting $\kappa=a+bj$, yields 
    $\lambda = a-\tfrac{s}{2}(a^2-b^2)+(b-sab)j$. Considering Theorem~\ref{thm:stability_of_equilibria},
     $\Re(\lambda)<0$ is equivalent to
    $a-\frac{s}{2}a^2+\frac{s}{2}b^2<0$. 
    When $a=0$ and $b\not=0$ (i.e., $\kappa$ is purely imaginary), then this inequality 
    does not hold for any positive choice of step size. When $b=0$ and $a\not=0$, i.e., $\kappa$ is real, $\Re(\lambda)<0$ for any $s>0$ as $a<0.$ In general when $(a,b)\not=(0,0),$ for $0<s<\frac{2\max_{\lambda\in\sigma(\Lambda_\tau H(z^\ast))}\{|\Re(\lambda)|\}}{\max\{L^2,\frac{L^2}{\tau^2}\}},$ we have $\Re(\lambda)<0.$
	This completes the proof.
\end{proof}
 \begin{proof}[Proof of Theorem~\ref{thm:GEG_con}]
	To prove Theorem~\ref{thm:GEG_con}(i), consider \eqref{o1:GEG}. The Jacobian of \eqref{o1:GEG} has the form $J(z) = \gamma\Lambda_\tau H(z).$ Let $\kappa=a+bj\in\sigma(\lambda_\tau H(z^\ast)).$ 
    To apply Theorem~\ref{thm:stability_of_equilibria}(i), 
    $\Re(\gamma\kappa)<0$ needs to hold 
    for $\gamma>0$. From Lemma~\ref{negeig} we know $a\leq0$ and from Assumption~\ref{as:inv}(i) we know $(a,b)\not=(0,0),$ but the case 
    $a=0$ and $b\not=0$, i.e., $\kappa$ is imaginary, can not be excluded. Thus, the set of saddle points of $f$ is a subset of the asymptotically stable equilibria of \eqref{o1:GEG} if $\Re(\kappa)\not=0$ for $\kappa\in\sigma(\lambda_\tau H(z^\ast)).$ 
	
	To prove Theorem~\ref{thm:GEG_con}(ii), consider \eqref{os:GEG}. We evaluate the Jacobian at the saddle point $z^\ast$ and knowing $F(z^\ast)=0,$ we have
    $J(z^\ast) = (\mathrm{I}+s(1-\tfrac{\gamma}{2})\Lambda_\tau H(z^\ast))\Lambda_\tau H(z^\ast)$.
    
	Let $\kappa=a+bj\in\sigma(\lambda_\tau H(z^\ast)).$ Then the eigenvalues of $J(z^\ast)$ have the form $\lambda = (1+s(1-\frac{\gamma}{2})\kappa)\kappa.$ 
    To be able to apply
    Theorem~\ref{thm:stability_of_equilibria}(i), the condition 
    $\Re(\lambda)<0$ needs to be satisfied. 
    Substituting $\kappa=a+bj$ we get 
	\[\lambda \!=\! (1\!+\!s(1-\tfrac{\gamma}{2})(a+bi))\gamma(a+bj) \!=\! \gamma a+\gamma s(1-\tfrac{\gamma}{2})(a^2-b^2)+\gamma(b+s(1-\tfrac{\gamma}{2})2ab)j.\] Thus, we need 
    \begin{align}
    \gamma a+\gamma s(1-\tfrac{\gamma}{2})(a^2-b^2)<0 \label{eq:pf44_cond}
    \end{align}
    to hold.
    When
    $a=0$ and $b\not=0$, i.e., $\kappa$ is imaginary, the condition holds 
    for $s>0$ and $0<\gamma<2$. When $b=0$ and $a\not=0$, i.e., $\kappa$ is real, $\Re(\lambda)<0$ if $a+s(1-\frac{\gamma}{2})a^2<0$ or 
	$\gamma>2(1-\frac{1}{s|a|}).$
	For $b=0$, we can conclude that \eqref{eq:pf44_cond} is satisfied 
    for $\gamma>0$ and $s<\frac{1}{\max\{L,\frac{L}{\tau}\}}.$
	In general, when $(a,b)\not=(0,0)$ 
    (and $a\leq 0$ according to Lemma \ref{negeig})
    for $\gamma>0$ we need \(a+s(1-\frac{\gamma}{2})(a^2-b^2)<0\), which is satisfied for $-a= |b|.$ 
    For $a\not=b$, 
    $a^2-b^2>0$ and $\gamma>0$,  
    \(\gamma>2(1-\frac{|a|}{s(a^2-b^2)})\) need to hold, which leads to the condition
    $s\leq\frac{|\max_{\lambda\in\sigma(\Lambda_\tau H(z^\ast))}\{\Re(\lambda)\}|}{\max\{L^2,\frac{L^2}{\tau^2}\}}$. 
    For the other case, i.e., 
    $a^2-b^2<0$, the inequality 
    \(1-\frac{\gamma}{2}>0\) needs to be satisfied and we observe that it holds 
    for $\gamma<2$ and $s>0$. 
    Combining these cases, we can conclude that the local saddle points are a subset of locally asymptotically stable 
    equilibria of the continuous flow of the GEG if
    $s$ satisfies the condition in Theorem \ref{thm:GEG_con}(ii) 
    and $0<\gamma<2.$	
\end{proof}
 \begin{proof}[Proof of Proposition~\ref{prop:TTPPM_uns}]
	First, we need to prove that the right-hand side of \eqref{dyn:TTPPM} is a local diffeomorphism.
	Considering the inverse function theorem \cite[Thm. 9.24]{rudin1964principles}, it is sufficient to show that the Jacobian 
    of \eqref{dyn:TTPPM} 
	is invertible.
	To this end, we 
	show that none of the eigenvalues of the Jacobian
    $J(z) = \frac{\partial}{\partial z}((I+s\Lambda_\tau F)^{-1}(z))$
    of \eqref{dyn:TTPPM} 
    is zero, and it is defined everywhere. 
    We define
\begin{align*}
    w_s(z) = (I+s\Lambda_\tau F)^{-1}z.  
\end{align*} 
Equivalently,  $w_s(z) + s\Lambda_\tau F(w_s(z)) = z$.
Noting that $J(z)=\frac{dw_s(z)}{dz}$, taking the derivative of this statement w.r.t. $z$ results in
\begin{align}\label{eq:J_PPM}
	J(z) + s\Lambda_\tau\nabla F(w_s(z))J(z) = I. 
\end{align} 
and consequently	
	\(
    J(z) = (I+s\Lambda_\tau\nabla F(w_s(z)))^{-1} = (I-s\Lambda_\tau H(w_s(z)))^{-1}.
    \)
	If $\kappa \in \sigma(\Lambda_\tau H(w_s(z))$ 
    then 
    $(1-s\kappa)^{-1} \in \sigma(J(z)).$
	Thus, 
    the condition \(s\|\Lambda_\tau H(z)\|< 1\) is sufficient for 
    the Jacobian to be defined for all $z\in \R^d$. 
    From Assumption~\ref{as:inv}(ii) we know \(\|H(z)\|\leq\|\nabla^2f(z)\|\leq L\), and from \eqref{eq:F_definition} it follows that 
    $\|\Lambda_\tau\|\leq\max\{\tau^{-1},1\}.$ Thus, for $s<\frac{\min\{1,\tau\}}{L},$ the right-hand side of  \eqref{dyn:TTPPM} is a local diffeomorphism. 
	Then, the remainder of the proof follows along the lines of the arguments in \cite[Thm 2]{lee2019first}.
\end{proof}
 \begin{proof}[Proof of Theorem~\ref{thm:TTPPM_con}]	
	To prove Theorem~\ref{thm:TTPPM_con}(i), consider \eqref{o1:TTPPM}. The Jacobian of \eqref{o1:TTPPM} has the form $J(z) = \Lambda_\tau H(z).$ Let $\kappa=a+bj\in\sigma(\lambda_\tau H(z^\ast)).$ Considering Theorem~\ref{thm:stability_of_equilibria}, we need 
    to analyse if $\Re(\kappa)<0$ is satisfied. 
    From Lemma~\ref{negeig} we know that $a\leq0$ and from Assumption~\ref{as:inv}(i) we know that $(a,b)\not=(0,0).$  But, we can not exclude the case where $a=0$ and $b\not=0$, i.e., $\kappa$ is imaginary. Thus, the set of saddle points of $f$ is a subset the asymptotically stable equilibria of \eqref{o1:TTPPM} if $\Re(\kappa)\not=0$ for $\kappa\in\sigma(\lambda_\tau H(z^\ast)).$ 
	
	To prove Theorem~\ref{thm:TTPPM_con}(ii), consider \eqref{os:TTPPM}. We evaluate the Jacobian at the saddle point $z^\ast$ and knowing $F(z^\ast)=0,$ we have
	$J(z^\ast) = (\mathrm{I}+\tfrac{s}{2}\Lambda_\tau H(z^\ast))\Lambda_\tau H(z^\ast)$. 
	Let $\kappa=a+bj\in\sigma(\lambda_\tau H(z^\ast)).$ Then the eigenvalues of $J(z^\ast)$ have the form $\lambda = (1+\frac{s}{2}\kappa)\kappa.$ Considering Theorem~\ref{thm:stability_of_equilibria}, we need 
    to analyse if $\Re(\lambda)<0$ is satisfied. 
    Substituting $\kappa=a+bj$ we get 
	\(\lambda = (1+\tfrac{s}{2}(a+bj))(a+bj) =  a+\tfrac{s}{2}(a^2-b^2)+(b+sab)j\) 
    and thus
    \begin{align}
        a+\tfrac{s}{2}(a^2-b^2)<0 \label{eq:pf_condTTPPM}
    \end{align} 
    needs to hold.
    For
    $a=0$ and $b\not=0$, i.e., $\kappa$ is imaginary, \eqref{eq:pf_condTTPPM} holds 
    for 
    $s>0$. When $b=0$ and $a\not=0$, i.e., $\kappa$ is real, $\Re(\lambda)<0$ if $a+\frac{s}{2}a^2<0$ or 
	$s\leq\frac{2}{|a|}$, leading to the condition
    $s<\frac{2}{\max\{L,\frac{L}{\tau}\}}.$
    For $(a,b)\not=(0,0)$ and 
    $|a|=|b|$, \eqref{eq:pf_condTTPPM} holds since $a\leq 0$ according to Lemma \ref{negeig}. 
    For $a\not=b$ and 
    $a^2-b^2>0$, \eqref{eq:pf_condTTPPM} leads to the condition 
    \(s\leq\frac{2|a|}{a^2-b^2}\) and thus
    $s\leq\frac{2|\max_{\lambda\in\sigma(\Lambda_\tau H(z^\ast))}\{\Re(\lambda)\}|}{\max\{L^2,\frac{L^2}{\tau^2}\}}$. 
    For $a\not=b$ (and $a<0$ and 
    $a^2-b^2<0$, $s$ needs to satisfy 
    \(-\frac{s}{2}>0\), i.e., \eqref{eq:pf_condTTPPM} holds for all $s>0$.
	
    Combining these cases, the local saddle points are a subset of 
    locally asymptotically stable 
    equilibria of the continuous flow of the TT-PPM 
    if $s$ satisfies the condition in Theorem \ref{thm:TTPPM_con}(ii). 
\end{proof}
 \begin{proof}[Proof of Proposition~\ref{prop:DN_uns}]	
	First, we prove that the right-hand side of \eqref{dyn:DN} is a local diffeomorphism.
	Considering the inverse function theorem \cite[Thm. 9.24]{rudin1964principles}, it is sufficient to show that the Jacobian of \eqref{dyn:DN}
	is invertible.
	To this end, we 
	show that none of the eigenvalues of Jacobian of \eqref{dyn:DN}, i.e.,
$J(z) = \frac{\partial}{\partial z}(z - s (\nabla F(z))^{-1} F(z))$,   
    is zero and the Jacobian
    is defined everywhere. 
	Taking the derivative of \eqref{dyn:DN}, we have
	\begin{align}\label{eq:J_DN}
		J(z) = I-s(I-(\nabla F(z))^{-1}\nabla^2F(z)(\nabla F(z))^{-1}F(z))
	\end{align}

	Thus if $s\|I-(\nabla F(z))^{-1}\nabla^2F(z)(\nabla F(z))^{-1}F(z)\|<1$ then the Jacobian is invertible. By Assumption~\ref{as:inv2} we know  $(\nabla F(z))^{-1}$ exists everywhere and $\|\nabla F(z)\|\leq L.$ Also, we have $|F(z)|<\infty$ and $\|\nabla^2 F(z)\|<\infty.$ Let $s_{\max} = \frac{1}{1+\frac{|F(z)|\|\nabla^2 F(z)\|}{L^2}}.$
    Then. for $0<s<s_{\max},$ the right-hand side of  \eqref{dyn:DN} is a local diffeomorphism. 
    The remainder of the proof follows along the lines of the arguments in \cite[Thm 2]{lee2019first}.
\end{proof}

\begin{proof}[Proof of Theorem~\ref{thm:DN_con}]	
	To prove Theorem~\ref{thm:DN_con}(i), consider \eqref{o1:DN}. The Jacobian of \eqref{o1:DN} has the form 
    $J(z) = (\nabla F(z))^{-1}\nabla^2F(z)(\nabla F(z))^{-1}F(z)-I.$ 
    
	Evaluating $J$ at the saddle point $z^\ast,$ we have $J(z^\ast) = -I.$   Thus, $\Re(\lambda)<0$ holds
    for all $s\geq 0$ and the assertion follows from
    Theorem~\ref{thm:stability_of_equilibria}.
	
	To prove Theorem~\ref{thm:DN_con}(ii), consider \eqref{os:DN}. We evaluate the Jacobian at the saddle point $z^\ast$. Since 
    $F(z^\ast)=0,$ we have
	$J(z^\ast) = -I-\frac{s}{2}I$, i.e., 
    $\Re(\lambda)<0$ for all 
    $\lambda\in\sigma(J(z^\ast))$, for all $s>0$, and the assertion follows again from 
    Theorem~\ref{thm:stability_of_equilibria}.
\end{proof}
 \begin{proof}[Proof of Proposition~\ref{prop:RDN_uns}]
As in the proof of Proposition~\ref{prop:DN_uns} we show that the eigenvalues of the 
    Jacobian of \eqref{dyn:RDN}, that is $J(z)=\frac{\partial}{\partial z}(z - s(\nabla F(z)+\phi I)^{-1}F(z))$,
    are unequal to zero for all $z\in \R^d$.
	Taking derivative of \eqref{dyn:RDN}, we have
	\begin{align*}
		J(z) \!=\! I\!-\!s\left [\!(\nabla F(z)+\phi I)^{-1}\nabla F(z)-(\nabla F(z)+\phi I)^{-1}\nabla^2F(z)(\nabla F(z)+\phi I)^{-1}F(z)\!\right ].
	\end{align*}	
	Thus, if $s\|(\nabla F(z)+\lambda I)^{-1}\nabla F(z)-(\nabla F(z)+\phi I)^{-1}\nabla^2F(z)(\nabla F(z)+\phi I)^{-1}F(z)\|<1$ then the Jacobian is invertible. By Assumption~\ref{as:inv} and knowing that $\phi>L$ we can conclude that 
    $(\nabla F(z)+\phi I)^{-1}$ exists everywhere and $|\nabla F(z)|\leq L.$ 
    Since 
    $|F(z)|<\infty$ and $\|\nabla^2 F(z)\|<\infty$, 
    $s_{\max} = \frac{1}{\frac{L}{L+\phi}+\frac{|F(z)|\|\nabla^2 F(z)\|}{(L+\phi)2}}$ is well defined.
	Thus, for $0<s<s_{\max},$ the right-hand side of  \eqref{dyn:RDN} is a local diffeomorphism. 
    The remainder of the proof follows along the lines of the arguments in \cite[Thm 2]{lee2019first}.
\end{proof}
 \begin{proof}[Proof of Theorem~\ref{thm:RDN_con}]
	To prove Theorem~\ref{thm:RDN_con}(i), consider \eqref{o1:RDN}. The Jacobian of \eqref{o1:RDN} has the form 
    $$J(z) = (\nabla F(z)+\phi I)^{-1}\nabla^2F(z)(\nabla F(z)+\phi I)^{-1}F(z)-(\nabla F(z)+\phi I)^{-1}\nabla F(z).$$ 
	Evaluating $J$ at the saddle point $z^\ast$ yields $$J(z^\ast) = (\phi I - H(z^\ast))^{-1}H(z^\ast).$$  
	Let $\kappa=a+bj$ be an arbitrary eigenvalue of $H(z^\ast).$ We know that $a\leq0$ and $(a,b)\not=(0,0).$ 
    Then the corresponding eigenvalue of the Jacobian can be written as 
    \(\frac{a+bj}{\phi-a-bj}\).
    Considering Theorem~\ref{thm:stability_of_equilibria}, we need 
    \begin{align}
    \Re\left(\frac{a+bj}{\phi-a-bj}\right)<0 \label{eq:pf_condRDN}
    \end{align}
    to conclude exponential stability.

	For 
    $a=0$ and $b\neq 0$, 
    $\Re(\frac{bj}{\phi-bj})
    =\frac{-b^2}{\phi^2+b^2}$ 
    , i.e., \eqref{eq:pf_condRDN} is satisfied.
    For
    $b=0$ 
    and $a\neq 0$---specifically, $a<0$ according to Lemma \ref{negeig}, we have
    \( \Re(\frac{bj}{\phi-bj}) = \frac{a}{\phi-a}<0\). 
	In the general case when $a\neq 0$ and $b\neq 0$ we need $\Re(\frac{\phi a-a^2+b^2-\phi bj}{(\phi-a)^2+b^2}) <0$, or equivalently \(\phi a - (a^2-b^2)<0\),
    needs to be satisfied to conclude exponential stability.
    This automatically holds for $a^2=b^2$ and $a^2-b^2>0$ as $a<0$ (according to Lemma \ref{negeig}). If $a^2-b^2<0,$ then 
    $\phi>\frac{|a^2-b^2|}{|a|}$ needs to be satisfied. Since $\frac{L^2}{|a|}>\frac{|a^2-b^2|}{|a|}$, the condition 
    $\phi>\frac{L^2}{|a|}$ is sufficient. Let $\phi_{\min} = \frac{L^2}{\min_{\kappa\in\sigma(H(z^\ast), \Re(\kappa)\not=0}{\Re(\kappa)}}$, then, for $\phi>\phi_{\min}$, the set of saddle points is a subset of the set of exponentially stable equilibria of \eqref{o1:RDN}.

	To prove Theorem~\ref{thm:RDN_con}(ii), consider \eqref{os:RDN}. We evaluate the Jacobian at the saddle point $z^\ast$. Using the fact that 
    $F(z^\ast)=0,$ we have
	\begin{align*}
    J(z^\ast) = (\phi I-H(z^\ast))^{-1}H(z^\ast)-\frac{s}{2}\Big((\phi I-H(z^\ast))^{-1}H(z^\ast) \Big)^2. 
    \end{align*}
	Let $\kappa=a+bj$ be an arbitrary eigenvalue of $H(z^\ast).$ We know that $a\leq0$ and $(a,b)\not=(0,0).$ Thus, considering Theorem~\ref{thm:stability_of_equilibria}, we need 
    \begin{align}
    \Re\left(\frac{a+bj}{\phi-a-bj}-\frac{s}{2}\left(\frac{a+bj}{\phi-a-bj}\right)^2\right)<0 \label{eq:pf_condRDN1}
    \end{align}
    to conclude exponential stability.
	For 
    $a=0$ 
    the condition reduces to $\Re(\frac{bj}{\phi-b{j}}-\frac{s}{2}(\frac{b{j}}{\phi-b{j}})^2)<0$ or $\Re(\frac{-b^2+\phi b{j}}{\phi^2+b^2}-\frac{s}{2}(\frac{-b^2+\phi b{j}}{\phi^2+b^2})^2)<0$. We simplify the condition and require 
    $\frac{-b^2}{\phi^2+b^2}-\frac{s}{2}\frac{b^4-\phi^2b^2}{(\phi^2+b^2)^2}<0.$ As $\phi>L,$ the condition is equivalent to $s<\frac{2b^2(\phi^2+b^2)}{\phi^2b^2-b^4}$, which 
    holds for $s\leq\frac{2L^2}{\phi^2}.$
	If $b=0$ 
    then 
    \(\frac{a}{\phi-a}-\frac{s}{2}\frac{a^2}{(\phi-a)^2}<0\) needs to hold. We know that $a<0$ (if $b=0$) and thus the condition 
    holds for any  $s>0$. 
	
 In the case $a\neq 0$ and $b\neq 0$, 
 \(\Re(q-\frac{s}{2}q^2)<0\) with 
 $q = q_1 + q_2 j,$ $q_1=\frac{\phi a-(a^2+b^2)}{(\phi-a)^2+b^2}<0$ and $q_2 =\frac{\phi b}{(\phi-a)^2+b^2}$ needs to be satisfied, which can be rewritten into the condition
    $q_1-\frac{s}{2}(q_1^2-q_2^2)<0$. If $q_1^2-q_2^2\geq0$ or $|q_1|=|q_2|$ this condition 
    holds. Additionally, 
    for $q_1^2-q_2^2<0$ the step size has to satisfy 
    $0<s<\frac{2|q_1|}{2|q_1^2-q_2^2|}.$
	Combining these cases, we define 
    $s_{\max} = \min\{\frac{2L^2}{\phi^2},\frac{2|q_1|}{|q_1^2-q_2^2|}\}$ and we can conclude that 
    for $s\in(0,s_{\max}),$ the set of saddle points is a subset of the set of exponentially stable equilibria of \eqref{os:RDN}.	
\end{proof}
 \section{$O(1)$- and $O(s)$-resolution ODE Constructions}\label{app:o}
In this section, we provide the details of the construction of the $O(1)$- and $O(s)$-resolution ODE of the different DTAs studied in this paper using
Theorem~\ref{th:ODE}.
To shorten the expressions in the following, we use the notation
\begin{align*}
 g_1(z) =    \frac{\partial w_s(z)}{\partial s}\Big|_{s=0}, \qquad  
g_2(z)= \frac{1}{2} \frac{\partial^2 w_s(z)}{\partial s^{2}}\Big|_{s=0}.
\end{align*}
 \subsection{\eqref{TTGDA}}
The corresponding DTA of TT-GDA has the form 
\(
	z_{k+1} = w_s(z_k) 
    = z_k - s\Lambda_\tau F(z_k).
\) 
Thus, $w_0(z)=z$ 
and we can use Theorem~\ref{th:ODE} for the construction of the differential equation \eqref{eq:ode}. Moreover, it can be seen that $g_1(z)=\frac{\partial w_s(z)}{\partial s}|_{s=0} 
= -\Lambda_\tau F(z)$ and $g_2(z)=\frac{1}{2}\frac{\partial^2 w_s(z)}{\partial s^{2}}|_{s=0} 
= 0.$
Thus
\begin{align*}
	& f_0(z) = g_1(z) 
    = -\Lambda_\tau F(z)\\
	& f_1(z) = g_2(z)  -\tfrac{1}{2}h_{2,0}(z) = -\tfrac{1}{2}\nabla f_0(z)f_0(z) = \tfrac{1}{2} \Lambda_\tau \nabla F(z)\Lambda_\tau F(z)
\end{align*}
and
the $O(1)$-resolution ODE of \eqref{TTGDA} is $\dot{z} = f_0(z)$ and the $O(s)$-resolution ODE of \eqref{TTGDA} is $\dot{z} = f_0(z)+sf_1(z)$, leading
to the expressions \eqref{o1:TTGDA} and \eqref{os:TTGDA}.
 \subsection{\eqref{GEG}}
The DTA of GEG has the form 
\(
	z_{k+1} = w_s(z_k) 
    = z_k - \gamma s\Lambda_\tau F(z_k-s\Lambda_\tau F(z_k)).
\) 
which satisfies $w_0(z)=0$ 
and consequently, 
Theorem~\ref{th:ODE} can be used. From 
Taylor's theorem we have 
\(z_{k+1} = z_k - \gamma s\Lambda_\tau F(z_k)+ \gamma s^2\Lambda_\tau\nabla F(z_k)\Lambda_\tau F(z_k)+o(s^2).\)
Thus, $g_1(z) = -\gamma\Lambda_\tau F(z)$ and $g_2(z) = \gamma\Lambda_\tau\nabla F(z)\Lambda_\tau F(z),$ i.e., 
\begin{align*}
	& f_0(z) = g_1(z) = -\gamma\Lambda_\tau F(z),\\
	& f_1(z) = g_2(z) -\tfrac{1}{2}h_{2,0}(z) = \gamma\Lambda_\tau\nabla F(z)\Lambda_\tau F(z) - \tfrac{1}{2}\gamma^2\Lambda_\tau\nabla F(z)\Lambda_\tau F(z).
\end{align*}
Accordingly, 
the $O(1)$-resolution ODE of \eqref{GEG} is $\dot{z} = f_0(z)$ and the $O(s)$-resolution ODE of \eqref{GEG} is $\dot{z} = f_0(z)+sf_1(z)$ which leads to \eqref{o1:GEG} and \eqref{os:GEG}.
 \subsection{\eqref{TTPPM}}
The DTA associated with TT-PPM has the form 
\(
	z^+ =  z - s\Lambda_\tau F(z^+) \) or \( z^+ = (I+s\Lambda_\tau F)^{-1}(z).
\)
Thus, $w_0(z)=z$, and we can use Theorem~\ref{th:ODE} and similar to 
\cite[Appendix B]{lu2022sr} it follows that
\begin{align*}
    (I+s\Lambda_\tau F)^{-1}z &= h_0(z)+sh_1(z)+s^2h_2(z)+o(s^2) \\    
\text{or}
\qquad z &= (I + s\Lambda_\tau F)(h_0(z)+sh_1(z)+s^2h_2(z)+o(s^2))
\end{align*}
By comparing
the $O(s)$ term in expressions, 
we have $h_0(z) = z$, $h_1(z) = -\Lambda_\tau F(z),$ and $h_2(z) = \nabla F(z)\Lambda_\tau F(z)$.
Hence, 
\(z^+ = z - s\Lambda_\tau F(z)+  s^2\Lambda_\tau\nabla F(z)\Lambda_\tau F(z)+o(s^2).\)
It can be seen that $g_1(z) = -\Lambda_\tau F(z)$ and $g_2(z) = \Lambda_\tau\nabla F(z)\Lambda_\tau F(z)$, i.e., 
\begin{align*}
	& f_0(z) \!=\! g_1(z) = -\Lambda_\tau F(z)\\
	& f_1(z) \!=\! g_2(z) \!-\!\tfrac{1}{2}h_{2,0}(z) \!=\! \Lambda_\tau\nabla F(z)\Lambda_\tau F(z) - \tfrac{1}{2}\Lambda_\tau\nabla F(z)\Lambda_\tau F(z) \!=\! \tfrac{1}{2}\Lambda_\tau\nabla F(z)\Lambda_\tau F(z)
\end{align*}
The $O(1)$-resolution ODE of \eqref{TTPPM} is $\dot{z} = f_0(z)$ and the $O(s)$-resolution ODE of \eqref{TTPPM} is $\dot{z} = f_0(z)+sf_1(z)$ which leads to \eqref{o1:TTPPM} and \eqref{os:TTPPM}.
 \subsection{DN}
The DTA of DN is
\(
	z_{k+1} = w_s(z_k) 
    = z_k - s(\nabla F(z_k))^{-1} F(z_k).
\)
We can use Theorem~\ref{th:ODE} as $w_0(z)=z$. Also, $g_1(z) = -(\nabla F(z_k))^{-1}F(z_k)$ and $g_2(z) = 0,$
\begin{align*}
	 f_0(z) &= g_1(z) = -(\nabla F(z))^{-1}F(z_k),\\
	 f_1(z) &= g_2(z) -\tfrac{1}{2}h_{2,0}(z) = -\tfrac{1}{2}\nabla f_0(z)f_0(z)  
	 \\&= \tfrac{1}{2}((\nabla F(z))^{-1}\nabla^2F(z)(\nabla F(z))^{-1}F(z)-I)(\nabla F(z))^{-1}F(z).
\end{align*}
Then, the $O(1)$-resolution ODE of \eqref{dyn:DN} is $\dot{z} = f_0(z)$ and the $O(s)$-resolution ODE of \eqref{dyn:DN} is $\dot{z} = f_0(z)+sf_1(z)$ which leads to \eqref{o1:DN} and \eqref{os:DN}.
 \subsection{RDN}
The DTA associated with RDN has the form 
\(
	z_{k+1} = w_s(z_k) = z_k - s(\nabla F(z_k)+\lambda I)^{-1} F(z_k).
\) 
Thus, $w_0(z)=z$ and we can use Theorem~\ref{th:ODE}. Moreover it can be seen that $g_1(z) = -(\nabla F(z_k)+\lambda I)^{-1}F(z_k)$ and $g_2(z) = 0.$ Thus
\begin{align*}
	f_0(z) &= g_1(z) = -(\nabla F(z)+\lambda I)^{-1}F(z_k)\\
	f_1(z) &=g_2(z) \tfrac{1}{2}h_{2,0}(z) = -\tfrac{1}{2}\nabla f_0(z)f_0(z) \\
    &= \tfrac{1}{2}\Big((\nabla F(z)+\lambda I)^{-1}\nabla^2F(z)(\nabla F(z)+\lambda I)^{-1}F(z)\\ 
     &\quad-(\nabla F(z)+\lambda I)^{-1}\nabla F(z_k)\Big)(\nabla F(z)+\lambda I)^{-1}F(z)
\end{align*}
Then, the $O(1)$-resolution ODE of \eqref{dyn:RDN} is $\dot{z} = f_0(z)$ and the $O(s)$-resolution ODE of \eqref{dyn:RDN} is $\dot{z} = f_0(z)+sf_1(z)$ which leads to \eqref{o1:RDN} and \eqref{os:RDN}.
 \bibliographystyle{siamplain}
\bibliography{SICON}
\end{document}